\RequirePackage{amsmath}
\documentclass[a4paper, 10pt, reqno]{amsart}
\usepackage{amssymb, url, color, pb-diagram, graphicx, amscd, pb-diagram,mathrsfs}
\usepackage[nodayofweek]{datetime}
\usepackage{enumerate,stmaryrd} 
\usepackage{euscript,enumitem}
\usepackage{verbatim}
\usepackage{graphicx}
\usepackage{times}
\usepackage{arydshln} 
\usepackage{setspace}
\usepackage[space, compress, sort]{cite}
\usepackage{empheq}
\usepackage[usenames,dvipsnames]{xcolor}
\usepackage[colorlinks=true, bookmarks=true, pdfstartview=FitH, pagebackref=true]{hyperref}

\numberwithin{equation}{section}
\theoremstyle{plain}

\newtheorem{step}{Step}

\newtheorem{thm}{Theorem}[section]
\newtheorem{theorem}[thm]{Theorem}

\newtheorem{corollary}[thm]{Corollary}

\newtheorem{lemma}[thm]{Lemma}

\newtheorem{proposition}[thm]{Proposition}

\theoremstyle{remark}

\theoremstyle{definition}

\renewcommand{\phi}{\varphi}

\newcommand{\Mbar}{\overline{M}}

\newcommand{\bR}{\mathbb{R}}
\newcommand{\bN}{\mathbb{N}}

\newcommand{\gbar}{\overline{g}}
\newcommand{\gtil}{\widetilde{g}}
\newcommand{\chibar}{\overline{\chi}}

\newcommand{\ubar}{\overline{u}}

\newcommand{\cF}{\mathcal{F}}
\newcommand{\cY}{\mathcal{Y}}
\newcommand{\cZ}{\mathcal{Z}}

\newcommand{\vtil}{\widetilde{v}}
\newcommand{\util}{\widetilde{u}}
\newcommand{\Qtil}{\widetilde{Q}}
\newcommand{\Ttil}{\widetilde{T}}
\newcommand{\Wtil}{\widetilde{W}}

\newcommand{\cC}{\mathcal{C}}

\newcommand{\Vbar}{\overline{V}}
\newcommand{\Wbar}{\overline{W}}
\newcommand{\phibar}{\overline{\phi}}

\newcommand{\phitil}{\widetilde{\phi}}
\newcommand{\Atil}{\widetilde{A}}

\newcommand{\Xbar}{\overline{X}}
\newcommand{\Deltabar}{\overline{\Delta}}

\newcommand{\ghat}{{\widehat{g}}}

\newcommand{\definedas}{\mathrel{\raise.095ex\hbox{\rm :}\mkern-5.2mu=}}
\newcommand{\ssubset}{\subset\joinrel\subset}

\newcommand{\pdiff} [2]{\frac{\partial #1}{\partial #2}}

\let\<\langle 
\let\>\rangle


\DeclareMathOperator{\tr}{tr}

\DeclareMathOperator{\vol}{Vol}
\DeclareMathOperator{\inj}{inj}

\DeclareMathOperator{\loc}{loc}
\renewcommand{\labelenumi}{\arabic{enumi}.}
\DeclareMathOperator{\supp}{supp}

\newcommand{\scal}{\mathrm{Scal}}

\newcommand{\hscal}{\widehat{\scal}}

\newcommand{\scaltil}{\widetilde{\scal}}

\renewcommand{\leq}{\leqslant}
\renewcommand{\geq}{\geqslant}


\newcommand{\Deltatil}{\widetilde{\Delta}}

\begin{document}

\title[Prescribed scalar curvature]
{Prescribed non positive scalar curvature on asymptotically hyperbolic
manifolds with application to the Lichnerowicz equation}

\begin{abstract}
We study the prescribed scalar curvature problem, namely
finding which function can be obtained as the scalar curvature of a
metric in a given conformal class. We deal with the case of
asymptotically hyperbolic manifolds and restrict ourselves to non positive
prescribed scalar curvature. Following \cite{Rauzy,DiltsMaxwell}, we
obtain a necessary and sufficient condition on the zero set of the
prescribed scalar curvature so that the problem admits a (unique)
solution.
\end{abstract}

\author[R. Gicquaud]{Romain Gicquaud}
\address[R. Gicquaud]{
  Institut Denis Poisson\\
  Universit\'e de Tours\\
  Parc de Grandmont\\ 37200 Tours \\ France}
\email{romain.gicquaud@idpoisson.fr}

\keywords{Differential geometry, Riemannian geometry, Conformal geometry,
Prescribed scalar curvature, Asymptotically hyperbolic manifold}

\subjclass[2010]{53A30 (Primary), 53C20, 58J05 (Secondary)}

\date{September 10, 2019}
\maketitle
\tableofcontents

\section{Introduction}
Given a Riemannian manifold $(M^n, g)$, the prescribed scalar curvature
problem consists in finding which function can be obtained as the scalar
curvature of a metric $\ghat$ conformal to $g$ on $M$. Namely, if $\scal$
is the scalar curvature of $g$, $\hscal$ is the scalar curvature that
one wants to prescribe and if we set $\ghat = \phi^{N-2} g$ for some
positive (unknown) function $\phi$, where $N \definedas 2n/(n-2)$,
the problem amounts to solving the following equation for $\phi$:
\begin{equation}\label{eqPrescribedScalar}
 - \frac{4(n-1)}{n-2} \Delta \phi + \scal~\phi = \hscal \phi^{N-1}.
\end{equation}
Here the exponent $N-2 = 4/(n-2)$ is chosen so that the equation has the
``nicest'' form. We refer the reader to \cite{Besse} for a discussion of
the conformal transformation laws of the curvature operators.

The equation \eqref{eqPrescribedScalar} is now well studied, at least on
compact manifolds, but getting a full understanding of the prescribed
scalar curvature problem remains a hard task. We refer the reader to
\cite{Aubin} for more details.

The aim of this paper is to extend the results of \cite{Rauzy,DiltsMaxwell}
to the context of asymptotically hyperbolic manifolds, namely to a class
of complete non-compact manifolds having sectional curvature tending to
$-1$ at infinity. See Section \ref{secPrelim} for the precise definition.
The idea of this paper is to restrict to the class of non-positive
functions $\hscal$. In this setting, the functions $\hscal$
for which Equation \eqref{eqPrescribedScalar} can be solved are
explicitely known: these are the ones whose zero set
$\cZ = \hscal^{-1}(0)$ has positive local Yamabe invariant (see Equation
\eqref{eqYamabe}). Further, in this case the solution to Equation
\eqref{eqPrescribedScalar} is unique.

In the case $\hscal < 0$, the prescribed scalar curvature equation on an
asymptotically hyperbolic manifold is by now well studied. We refer the
reader to \cite{AvilesMcowen,AnderssonChruscielFriedrich,Delay,Gicquaud}
and references therein for previous results.

Our motivation for studying this problem comes from the study of the
so called Lichnerowicz equation in general relativity for which a complete
understanding can be gained from the particular case of the prescribed
scalar curvature problem we treat in this paper. This is the topic of
Section \ref{secLich}.

The outline of this paper is as follows. Section \ref{secPrelim} is
a presentation of the function spaces that will be relevant in the paper.
Section \ref{secLocalYamabe}
introduces the local Yamabe invariant together with the local first
conformal eigenvalue which are the main ingredients to discrimiate
which scalar curvature functions $\hscal \leq 0$ can be prescribed in a
given conformal class. The main results of the paper (Theorem \ref{thmA}
and Corollary \ref{thmB}) are proven in Section \ref{secMain}. Finally,
Section \ref{secLich} is dedicated to the study of the Lichnerowicz
equation \eqref{eqLichnerowicz} on an asymptotically hyperbolic manifold.

\section{Asymptotically hyperbolic manifolds and their function spaces}
\label{secPrelim}
This section is mostly based on the monograph \cite{LeeFredholm} and on
\cite{GicquaudSakovich}. We remark that the result in this paper could
be adapted with some effort to the broader contexts described in
\cite{AllenIsenbergLeeStavrov} and
\cite{BahuaudMarsh,Bahuaud,BahuaudGicquaud,GicquaudCompactification}.

Let $\Mbar$ be a $n$-dimensional smooth manifold with boundary. We denote
by $\partial M$ the boundary of $\Mbar$ and by $M$ the interior of $\Mbar$:
$M = \Mbar \setminus \partial M$.

A \emph{defining function} for $\partial M$ is a smooth function
$\rho \geq 0$ on $\Mbar$ such that $\rho^{-1}(0) = \partial M$ and such
that $d\rho \neq 0$ on $\partial M$ (i.e. $0$ is a regular value for $\rho$).

A metric $g$ on $M$ is said to be $C^{l, \beta}$-\emph{asymptotically
hyperbolic}, where $l \geq 2$ and $\beta \in [0, 1)$, if
$\gbar \definedas \rho^2 g$ extends to a $C^{l, \beta}$ metric on $\Mbar$
such that $|d\rho|^2_{\gbar} \equiv 1$ on $\partial M$.

It can be seen that this definition is independent of the choice of the
defining function $\rho$, that the metric $g$ is complete with
sectional curvature satisfying $\sec_g = -1 + O(\rho)$ (see e.g.
\cite{MazzeoHodge}).

We fix once and for all in this section an asymptotically hyperbolic
manifold $(M, g)$ and a defining function $\rho$. Since our concern is
only Equation \eqref{eqPrescribedScalar}, we will restrict to the
definition of function spaces (i.e. spaces of real valued functions)
refering the reader to \cite{LeeFredholm,GicquaudSakovich} for the
definition of natural spaces of sections of geometric bundles. Let
$\Omega$ be an open subset of $M$. We define three classes of function
spaces:

\begin{itemize}[leftmargin=*]
\item\textsc{Weighted Sobolev spaces:} Let $0 \leq k \leq l$ be an integer,
let $1 \leq p < \infty$ be a real number, and let $\delta \in \bR$.
The weighted Sobolev space $W^{k, p}_\delta(\Omega, \bR)$ is the set of functions
$u$ such that $u \in W^{k, p}_{loc}(\Omega, \bR)$ and such that the norm
\[
\left\| u \right\|_{W^{k, p}_\delta(\Omega, \bR)} = \sum_{i=0}^k \left( \int_\Omega \left| \rho^{-\delta} \nabla^{(i)} u \right|^p_g d\mu_g\right)^{\frac{1}{p}}
\]
is finite.\\
\item\textsc{Weighted local Sobolev spaces:} Let $0 \leq k \leq l$ be an
integer, let $1 \leq p \leq \infty$ be a real number, let
$\delta \in \bR$ and $r < \inj(M, g)$ be given. The weighted local
Sobolev space $X^{k, p}_\delta(\Omega, \bR)$ is the set of functions $u$ such
that $u \in W^{k, p}_{loc}(\Omega, \bR)$ and such that the norm
\[
\left\| u \right\|_{X^{k, p}_\delta(\Omega, \bR)} = \sup_{x \in \Omega} \rho^{-\delta}(x) \left\|u\right\|_{W^{k, p}(B_r(x)\cap \Omega)}
\]
is finite. Here $B_r(x)$ denotes the ball of radius $r$ centered at $x$.\\
\item\textsc{Weighted H\"older spaces:} Let an integer $k \geq 0$ and
$0 \leq \alpha < 1$ be such that $k + \alpha \leq l + \beta$, let
$\delta \in \bR$ and $r < \inj(M, g)$ be given. The weighted H\"older
space $C^{k, \alpha}_\delta(\Omega, \bR)$ is the set of functions $u$ such
that $u \in C^{k, \alpha}_{loc}(\Omega, \bR)$ and such that the norm
\[
\left\|u\right\|_{C^{k, \alpha}_\delta(\Omega, \bR)} = \sup_{x \in \Omega} \rho^{-\delta}(x) \left\| u\right\|_{C^{k, \alpha}(B_1(x)\cap \Omega)}
\]
is finite.
\end{itemize}
We remark that we did not indicate the dependence of the norm with
respect to $r$ since different choices for $r \in (0, \inj(M, g))$
yield equivalent norms.
Weighted H\"older and Sobolev spaces are studied in great detail in
\cite{And93,GrahamLee,LeeFredholm} while weighted local Sobolev spaces
were introduced in \cite{GicquaudSakovich}. Some further properties of
these function spaces will be given in the rest of this section.\\

\noindent\textsc{Notation:} 
We choose $\chi: \bR_+ \to [0, 1]$ to be an arbitrary smooth
function such that $\chi \equiv 1$ on the interval $[0, 1]$ and
$\chi \equiv 0$ on $[2, \infty)$. For any $\rho_0 > 0$, we set
$\chibar_{\rho_0} \definedas \chi(\rho/\rho_0): \Mbar \to [0, 1]$
so that $\chibar_{\rho_0} \equiv 1$ near infinity. We
also set $\chi_{\rho_0} \definedas 1-\chi(\rho/\rho_0)$
so $\chi_{\rho_1}$ has compact support in $M$ and
$\chi_{\rho_0} + \chibar_{\rho_0} \equiv 1$.
We also define $M_{\rho_0} \definedas \rho^{-1}(0, \rho_0)$ so that
$\chibar_{\rho_0} \equiv 1$ on $M_{\rho_0}$. This notation
will be useful in the next lemma.\\

\begin{lemma}\label{lmMapping}
Given $\rho_0 > 0$ and $\delta \geq 0$, let $f \in X^{0, p}_\delta(M, \bR)$
for some $p>n/2$ and $u \in W^{1, 2}_0(M_{\rho_0}, \bR)$ be given. Then
$f u^2 \in L^1(M_{\rho_0}, \bR)$ and for any $\mu > 0$, there exists
a constant $C_\mu$ depending on $(M, g, p)$ but not on $\rho_0$ such
that
\[
\left|\int_{M_{\rho_0}} f u^2 d\mu^g\right|
 \leq \rho_0^\delta \|f\|_{X^{0,p}_\delta(M, \bR)} \left(\mu^2 \|u\|^2_{W^{1, 2}_0(M_{\rho_0}, \bR)} + C_\mu \|u\|^2_{L^2(M_{\rho_0}, \bR)}\right).
\]
\end{lemma}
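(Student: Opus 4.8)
The plan is to localise the integral to geodesic balls of a fixed radius, apply H\"older's inequality on each ball together with the \emph{compact} Sobolev embedding $W^{1,2}\hookrightarrow L^{2p'}$ (this is precisely where the hypothesis $p>n/2$ is used), and then glue the local estimates via a bounded--overlap cover of $M$. The fact that an asymptotically hyperbolic manifold has bounded geometry is what makes every constant below uniform on $M$ and, crucially, independent of $\rho_0$.

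First I would set $p'\definedas p/(p-1)$, so $\tfrac1p+\tfrac1{p'}=1$, and observe that $p>n/2$ is equivalent to $2p'=\tfrac{2p}{p-1}<\tfrac{2n}{n-2}=N$, hence the embedding $W^{1,2}(B_r(x))\hookrightarrow L^{2p'}(B_r(x))$ is compact, with embedding constant uniform in $x\in M$. Ehrling's lemma (Peter--Paul) then gives, for every $\varepsilon>0$, a constant $C_\varepsilon$ independent of $x$ with
\[
\|v\|^2_{L^{2p'}(B_r(x))}\leq \varepsilon^2\|v\|^2_{W^{1,2}(B_r(x))}+C_\varepsilon\|v\|^2_{L^2(B_r(x))}\qquad\forall\,v\in W^{1,2}(B_r(x)).
\]
On a single ball, H\"older's inequality yields $\int_{B_r(x)}|f|\,v^2\,d\mu_g\leq\|f\|_{L^p(B_r(x))}\,\|v\|^2_{L^{2p'}(B_r(x))}$, and combining this with the previous display and with $\|f\|_{L^p(B_r(x))}\leq\rho^\delta(x)\,\|f\|_{X^{0,p}_\delta(M,\bR)}$ gives a local form of the desired estimate.

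Next I would fix a cover of $M$ by balls $B_r(x_i)$ with bounded overlap $N_0=N_0(M,g,r)$, extend $u$ by zero to an element of $W^{1,2}(M)$ of the same norm, and sum the local estimates over $i$. Only indices with $B_r(x_i)\cap M_{\rho_0}\neq\emptyset$ contribute; for those, since $\rho$ is a defining function and $\gbar=\rho^2 g$ extends to $\Mbar$, the function $|d\log\rho|_g=|d\rho|_{\gbar}$ is bounded on $M$, so $\rho(x_i)\leq c_r\,\rho_0$ for a constant $c_r$ depending only on $(M,g,r)$; because $\delta\geq 0$ this gives $\rho^\delta(x_i)\leq c_r^\delta\rho_0^\delta$. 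Summing and using the bounded overlap,
\[
\int_{M_{\rho_0}}|f|\,u^2\,d\mu_g\leq c_r^\delta N_0\,\rho_0^\delta\,\|f\|_{X^{0,p}_\delta(M,\bR)}\Bigl(\varepsilon^2\|u\|^2_{W^{1,2}_0(M_{\rho_0},\bR)}+C_\varepsilon\|u\|^2_{L^2(M_{\rho_0},\bR)}\Bigr),
\]
which in particular shows $fu^2\in L^1(M_{\rho_0},\bR)$. Finally, given $\mu>0$, taking $\varepsilon\definedas\mu\,(c_r^\delta N_0)^{-1/2}$ and $C_\mu\definedas c_r^\delta N_0\,C_\varepsilon$ absorbs the stray factor and produces exactly the stated inequality, with $C_\mu$ depending on $(M,g,p)$ (and on $\mu,\delta$) but not on $\rho_0$.

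I expect the only real obstacle to be bookkeeping: one has to verify that the Rellich/Sobolev constant, the Ehrling constant $C_\varepsilon$, the overlap bound $N_0$, and the weight-comparison constant $c_r$ can all be chosen uniformly over $M$ and independently of $\rho_0$. All of this follows from the bounded geometry of asymptotically hyperbolic manifolds (and the positivity of the injectivity radius, which justifies the choice of $r$) together with the sign condition $\delta\geq 0$, which is what lets $\rho^\delta(x_i)$ be dominated by $\rho_0^\delta$ rather than blowing up near $\partial M$.
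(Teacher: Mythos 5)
Your proposal follows the same overall architecture as the paper's proof: cover by balls of fixed radius with bounded overlap, a local H\"older--plus--Peter--Paul estimate on each ball, and then summation using the uniform overlap constant and the weight bound $\rho(x_i)\lesssim\rho_0$. The only methodological variation is in the local estimate, where you invoke Ehrling's lemma based on the compact embedding $W^{1,2}\hookrightarrow L^{2p'}$ (with $2p'<N$ exactly when $p>n/2$), whereas the paper interpolates $\|\cdot\|_{L^q}\leq\|\cdot\|_{L^2}^\lambda\|\cdot\|_{L^N}^{1-\lambda}$ and then applies Young's inequality; the latter is a constructive form of the Ehrling estimate and makes the uniformity of $C_\varepsilon$ over the cover transparent, while a bare appeal to Ehrling's lemma is qualitative and requires the extra (but standard, bounded-geometry) argument you gesture at to make $C_\varepsilon$ independent of the center $x$.

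There is, however, a genuine gap in one step: you propose to \emph{``extend $u$ by zero to an element of $W^{1,2}(M)$ of the same norm.''} In the paper's conventions, $W^{1,2}_0(M_{\rho_0},\bR)$ is the weighted Sobolev space with weight exponent $\delta=0$, i.e.\ it coincides with $W^{1,2}(M_{\rho_0},\bR)$ and carries \emph{no} vanishing-trace condition on the inner boundary $\Sigma_{\rho_0}=\rho^{-1}(\rho_0)$. (Indeed, in Step~1 of the proof of Theorem~\ref{thmA} the paper introduces the strictly smaller space $\Wtil^{1,2}_0(M_{\rho_0},\bR)$ precisely to impose that trace condition.) Consequently a generic $u\in W^{1,2}_0(M_{\rho_0},\bR)$ does not extend by zero to an element of $W^{1,2}(M,\bR)$ --- such an extension would have a jump across $\Sigma_{\rho_0}$. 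This is why the paper's proof begins by constructing bona fide extension operators $v\mapsto\vtil$ with $\supp(\vtil)\subset M_{2\rho_0}$ and operator norm $\Lambda$ uniform in $\rho_0$ (citing \cite[Theorem~7.25]{GilbargTrudinger}), and then works with $\util$. Your argument can be repaired either by inserting that extension operator, or by working directly on $M_{\rho_0}$ with the half-ball domains $B_r(x)\cap M_{\rho_0}$ and checking that the local Sobolev and Ehrling constants are uniform over those domains and over $\rho_0$; the former is cleaner and is what the paper does.
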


\begin{proof}
We let the reader convince himself that there exists a family of extension
operators $v \mapsto \vtil$ from $W^{1, 2}_0(M_{\rho_0}, \bR)$ to
$W^{1, 2}_0(M, \bR)$ for $\rho_0$ small enough such that
$\supp(\vtil) \subset M_{2\rho_0}$ and such that there exists a constant
$\Lambda$ independent of $\rho_0$ such that
$\|\vtil\|_{W^{1, 2}_0(M, \bR)} \leq \Lambda \|v\|_{W^{1, 2}_0(M_{\rho_0}, \bR)}$
for all $v \in W^{1, 2}_0(M_{\rho_0}, \bR)$ (see e.g.
\cite[Theorem 7.25]{GilbargTrudinger} for a construction of the extension
operators on $\bR^n$).

It follows from \cite[Lemma 2.2]{LeeFredholm} that $M$ can be covered by
countably many open balls $B_i = B_r(x_i)$, where
$0 < r < \min \{\inj(M, g), 1\}$ is arbitrary, $i \in I$, such that the
cover is uniformly locally finite: for some $K \geq 1$ and for all
$x \in M$, $\# \{i \in I,~x \in B_i\} \leq K$. Let $I_0 \subset I$ be
the set of balls intersecting $M_{2\rho_0}$ :
$I_0 = \{i \in I,~ B_i \cap M_{2\rho_0} \neq \emptyset\}$.

Let $q$ be such that
\[
 \frac{1}{p} + \frac{2}{q} = 1.
\]
Since $n/2 < p < \infty$, we have $q \in (2, N)$. Let $\lambda \in (0, 1)$
be such that
\[
 \frac{1}{q} = \frac{\lambda}{2} + \frac{1-\lambda}{N},
\]
then we have, for all $i \in I_0$ and arbitrary $\mu > 0$,
\begin{align*}
\|\util\|_{L^q(B_i, \bR)}
 &\leq \|\util\|_{L^2(B_i, \bR)}^\lambda \|\util\|_{L^N(B_i, \bR)}^{1-\lambda}\\
 &\leq \lambda \mu^{-\frac{\lambda}{1-\lambda}} \|\util\|_{L^2(B_i, \bR)} + (1-\lambda) \mu \|\util\|_{L^N(B_i, \bR)}
\end{align*}
by Young's inequality. The function $t \mapsto t^2$ being convex, we have
\[
 \|\util\|^2_{L^q(B_i, \bR)} \leq \lambda \mu^{-\frac{2\lambda}{1-\lambda}} \|\util\|_{L^2(B_i, \bR)}^2 + (1-\lambda) \mu^2 \|\util\|_{L^N(B_i, \bR)}^2.
\]
Hence,
\begin{align*}
\int_M |f| \util^2 d\mu^g
 &\leq \sum_{i \in I_0} \int_{B_i} |f| \util^2 d\mu^g\\
 &\leq \sum_{i \in I_0} \|f\|_{L^p(B_i, \bR)} \|\util\|^2_{L^q(B_i, \bR)}\\
 &\leq \sum_{i \in I_0} \rho_i^\delta \|f\|_{X^{0,p}_\delta(M_{\rho_0}, \bR)} \left(\lambda \mu^{-\frac{2\lambda}{1-\lambda}} \|\util\|_{L^2(B_i, \bR)}^2 + (1-\lambda) \mu^2 \|\util\|_{L^N(B_i, \bR)}^2\right).
\end{align*}
We next claim that there exists a uniform constant $s > 0$ such that
for all balls $B_i$ and any function $v \in W^{1, 2}_0(B_i, \bR)$,
$\|u\|_{L^N(B_i, \bR)}^2 \leq s \|u\|_{W^{1, 2}_0(B_i, \bR)}^2$. This
follows from the fact that the metric has curvature bounded from above
and from below on $M$ and $r < \min \{\inj(M, g), 1\}$. As a consequence, the previous
estimate becomes
\begin{align*}
&\int_M |f| \util^2 d\mu^g\\
&\qquad\leq \left(\max_{i \in I_0} \rho_i^\delta\right) \|f\|_{X^{0,p}_\delta(M_{\rho_0}, \bR)} \left(\lambda \mu^{-\frac{2\lambda}{1-\lambda}} \sum_{i \in I_0} \|\util\|_{L^2(B_i, \bR)}^2 + s (1-\lambda) \mu^2 \sum_{i \in I_0}\|\util\|_{W^{1, 2}(B_i, \bR)}^2\right)\\
&\qquad\leq K \left(\max_{i \in I_0} \rho_i^\delta\right) \|f\|_{X^{0,p}_\delta(M_{\rho_0}, \bR)} \left(\lambda \mu^{-\frac{2\lambda}{1-\lambda}} \int_M \util^2 d\mu^g + s (1-\lambda) \mu^2 \int_M \left(|d\util|^2 + \util^2\right) d\mu^g\right)\\
&\qquad\leq \Lambda K \left(\max_{i \in I_0} \rho_i^\delta\right) \|f\|_{X^{0,p}_\delta(M_{\rho_0}, \bR)} \left(\lambda \mu^{-\frac{2\lambda}{1-\lambda}} \|u\|^2_{L^2(M_{\rho_0}, \bR)} + s (1-\lambda) \mu^2 \|u\|^2_{W^{1, 2}_0(M_{\rho_0}, \bR)}\right).
\end{align*}
The result follows by redefinig $\mu$.
\end{proof}

\begin{lemma}\label{lmCompact}
Let $\delta' > 0$, $p > n/2$ and $f \in X^{0, p}_{\delta'}(M, \bR)$ be
given. For arbitrary $\delta \in \bR$, the multiplication mapping
$u \mapsto fu$ from $X^{2, p}_\delta(M, \bR)$ to $X^{0, p}_\delta(M, \bR)$
is compact.
\end{lemma}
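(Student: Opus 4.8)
The plan is to exhibit the multiplication operator $T_f\colon u\mapsto fu$ as the operator-norm limit of a family of compact operators, using the cut-off functions $\chi_{\rho_0},\chibar_{\rho_0}$ from the Notation above. For $\rho_0>0$ I would split $f=f\chi_{\rho_0}+f\chibar_{\rho_0}$. The piece $f\chi_{\rho_0}$ has compact support in $M$, and I expect multiplication by it to be a compact operator by a direct application of the Rellich--Kondrachov theorem on a compact neighbourhood of its support. The piece $f\chibar_{\rho_0}$ is supported in $M_{2\rho_0}$, where the weight $\rho^{\delta'}$ is as small as we like because $\delta'>0$, so I expect the corresponding multiplication operator to have operator norm $O(\rho_0^{\delta'})$. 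Since the compact operators form a norm-closed subspace of $\mathcal{L}\big(X^{2,p}_\delta(M,\bR),X^{0,p}_\delta(M,\bR)\big)$, sending $\rho_0\to0$ would conclude. Throughout I would invoke, exactly as in the proof of Lemma~\ref{lmMapping}, that bounded geometry ($r<\inj(M,g)$ together with two-sided curvature bounds) makes the constants in the local embeddings on the balls $B_r(x)$ uniform in $x$ --- in particular $W^{2,p}(B_r(x))\hookrightarrow L^\infty(B_r(x))$ with a uniform constant, which is where the hypothesis $p>n/2$ is used --- and that $\rho(y)$ is uniformly comparable to $\rho(x)$ for $y\in B_r(x)$.

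For the tail estimate I would, for each $x$, apply H\"older on $B_r(x)$: $\|f\chibar_{\rho_0}u\|_{L^p(B_r(x))}\leq\|f\chibar_{\rho_0}\|_{L^p(B_r(x))}\|u\|_{L^\infty(B_r(x))}$. The first factor vanishes unless $B_r(x)$ meets $\{\rho\leq2\rho_0\}$, in which case $\rho(x)\leq C_1\rho_0$ by the comparability of $\rho$ over balls, so $\|f\|_{L^p(B_r(x))}\leq\rho(x)^{\delta'}\|f\|_{X^{0,p}_{\delta'}(M,\bR)}\leq(C_1\rho_0)^{\delta'}\|f\|_{X^{0,p}_{\delta'}(M,\bR)}$ using $\delta'>0$; the second factor is controlled by $C\|u\|_{W^{2,p}(B_r(x))}\leq C\rho(x)^{\delta}\|u\|_{X^{2,p}_\delta(M,\bR)}$ straight from the definition of the norm. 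Multiplying by $\rho(x)^{-\delta}$ (the factors $\rho(x)^{\pm\delta}$ cancel, so the sign of $\delta$ is irrelevant) and taking the supremum over $x$ gives $\|T_{f\chibar_{\rho_0}}\|\leq C(C_1\rho_0)^{\delta'}\|f\|_{X^{0,p}_{\delta'}(M,\bR)}\to0$; the same computation with $\rho_0$ replaced by $\sup_{\Mbar}\rho$ also establishes that $T_f$ is bounded to begin with.

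For the compactly supported piece I would fix $\rho_0$ and set $K\definedas\{\rho\geq\rho_0\}$, a compact subset of $M$ containing $\supp(f\chi_{\rho_0})$ (here one uses that $\Mbar$ is compact), and let $K_r$ be its $r$-neighbourhood in $(M,g)$, which is relatively compact in $M$. On $\overline{K_r}$ all the weights $\rho^{\delta}$ are bounded above and below, whence: (i) any $g$ supported in $K$ satisfies $\|g\|_{X^{0,p}_\delta(M,\bR)}\leq C_K\|g\|_{L^p(K)}$, since only balls centred in $K_r$ contribute to the $X$-norm; and (ii) covering $\overline{K_r}$ by finitely many balls $B_r(x_i)$ gives $\|u\|_{W^{2,p}(\overline{K_r})}\leq C_K\|u\|_{X^{2,p}_\delta(M,\bR)}$. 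Hence a bounded sequence $(u_j)$ in $X^{2,p}_\delta(M,\bR)$ is bounded in $W^{2,p}(\overline{K_r})$, and since $2p>n$ the embedding $W^{2,p}(\overline{K_r})\hookrightarrow C^0(\overline{K_r})$ is compact, so a subsequence $u_{j_k}$ converges uniformly on $\overline{K_r}$. Then $\|f\chi_{\rho_0}(u_{j_k}-u_{j_l})\|_{X^{0,p}_\delta(M,\bR)}\leq C_K\|f\chi_{\rho_0}\|_{L^p(K)}\|u_{j_k}-u_{j_l}\|_{C^0(\overline{K_r})}\to0$, so $(f\chi_{\rho_0}u_{j_k})$ is Cauchy, hence convergent in the Banach space $X^{0,p}_\delta(M,\bR)$; this shows $T_{f\chi_{\rho_0}}$ is compact, and together with the tail estimate and the norm-closedness of the compact operators this finishes the argument. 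The only points requiring a little care are the uniformity of the bounded-geometry estimates over the covering balls (already used in Lemma~\ref{lmMapping}, and available from \cite{LeeFredholm}) and the observation that $\{\rho\geq\rho_0\}$ and its $r$-neighbourhood sit in a compact part of $M$; I do not expect either to be a real obstacle, so this lemma is essentially a repackaging of Rellich's theorem adapted to the weighted local spaces.
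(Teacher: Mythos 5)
Your proof is correct, but it takes a genuinely different route from the paper's. The paper argues directly at the level of sequences: given a bounded sequence $(u_k)$ in $X^{2,p}_\delta$, it covers $M$ by the countable family of balls $B_i=B_r(x_i)$ from the covering lemma of \cite{LeeFredholm}, performs a diagonal extraction using Rellich on each ball to obtain a subsequence converging in $L^\infty_{\loc}$ to a limit $u$, verifies that $fu\in X^{0,p}_\delta$, and then shows $fu_{\theta(k)}\to fu$ in $X^{0,p}_\delta$ by splitting the index set into finitely many ``inner'' balls (where uniform convergence is used) and the remaining ``outer'' balls (where the smallness of $\|f\|_{L^p(B_i)}$, coming from $\delta'>0$, is used). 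You instead work at the level of operators: decompose $f=f\chi_{\rho_0}+f\chibar_{\rho_0}$, show the compactly-supported piece gives a compact operator by Rellich on a compact neighbourhood, show the tail piece has operator norm $O(\rho_0^{\delta'})$ by the same $\delta'>0$ mechanism, and conclude via the norm-closedness of the compact operators. Both proofs use the same two essential ingredients in disguise --- local Rellich compactness with bounded-geometry uniform constants, and the decay of $f$ near infinity encoded by $\delta'>0$ --- but your formulation avoids the explicit diagonal extraction and the need to identify the limit function $u$ and verify its membership in the target space, which the paper has to do by hand. The price you pay is having to justify that $\overline{K_r}$ is relatively compact in $M$ (which holds since $(M,g)$ is complete and $\rho$ decays to zero at infinity) and to set up the weighted-to-unweighted norm comparisons on $K$ and $K_r$; these are exactly the ``little care'' points you flag, and they do go through. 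Your estimate for the tail operator norm is also cleaner than it needs to be: as you note, the factors $\rho(x)^{\pm\delta}$ cancel identically, so the sign of $\delta$ is genuinely irrelevant, matching the lemma's hypothesis ``arbitrary $\delta\in\bR$.''
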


Note that this lemma is the extension to local Sobolev spaces of
\cite[Lemma 3.6]{LeeFredholm} and \cite[Theorem 2.3]{And93} that are left
unproven.

\begin{proof}
Let $(u_k)_{k \geq 0}$ be an arbitrary bounded sequence of elements of
$X^{2, p}_\delta(M, \bR)$. We have to show that there exists a subsequence
$(u_{\theta(k)})_{k\geq 0}$ such that $(f u_{\theta(k)})_{k\geq 0}$
converges in $X^{0, p}_\delta(M, \bR)$. Note first that the Sobolev
embedding theorem \cite[Proposition 2.3]{GicquaudSakovich} together
with the multiplication property \cite[Lemma 2.4]{GicquaudSakovich}
of the weighted local Sobolev spaces ensures that
$fu \in X^{0, p}_{\delta+\delta'}(M, \bR) \subset X^{0, p}_{\delta'}(M, \bR)$.

The proof is based on a diagonal extraction process.
We use once again the covering lemma \cite[Lemma 2.2]{LeeFredholm}.
$M$ can be covered by countably many open balls $B_i = B_r(x_i)$, $i \in \bN$.
It can be shown that replacing the supremum over all balls in the definition
of the norm defining weighted local Sobolev spaces by the supremum over all
balls $B_i$ yieds equivalent norms:
\[
\left\| u \right\|'_{X^{2, p}_\delta(M, \bR)} = \sup_{i \in \bN} \rho^{-\delta}(x_i) \left\|u\right\|_{W^{2, p}(B_r(x_i))}.
\]
This follows from the fact that $\rho$ can be chosen to be $e^{-d_g(\cdot, K)}$
with $K$ some compact set of $M$ (see e.g. \cite[Section 5]{LeeSpectrum}
and \cite{Bahuaud,BahuaudGicquaud,GicquaudCompactification}) together
with the triangle inequality for $d_g$.

We arrange the balls so that $\rho_i = \rho(x_i)$ is a decreasing function.
Note that $\rho_i \to 0$ as $i$ goes to infinity. Since the metric has
bounded geometry and using Rellich compactness theorem, we can define
inductively a sequence of (strictly) inceasing functions $\theta_i$ so
that $(u_{\theta_0(k)})_{k \geq 0}$ conveges in $L^\infty(B_0, \bR)$
and such that $(u_{\theta_{i+1}(k)})_{k \geq 0}$ is extracted from
$(u_{\theta_i(k)})_{k \geq 0}$ and converges in $L^\infty(B_{i+1}, \bR)$.
Set $\theta(k) = \theta_k(k)$ for all $k \geq 0$. The sequence $(u_{\theta(k)})_{k\geq 0}$
converges in $L^\infty_{loc}(M, \bR)$ to some function
$u \in L^\infty_{loc}(M, \bR)$ satisfying
$\left\|u\right\|_{L^\infty(B_i, \bR)} \leq C \rho^\delta$,
where $C > 0$ depends on the upper bound for
$\left(\|u_k\|_{X^{2, p}_\delta(M, \bR)}\right)_{k \geq 0}$ together with
the Sobolev constants of the embeddings
$W^{2, p}(B_i) \hookrightarrow L^\infty(M, \bR)$. Note that
$f u \in X^{0, p}_\delta(M, \bR)$. We claim that $f u_k \to f u$ in
$X^{0, p}_\delta(M, \bR)$. Fix an $\epsilon > 0$. Since
$f \in X^{0, p}_{\delta'}(M, \bR)$, there exists $i_0 > 0$ such that for
all $i \geq i_0$, we have $\|f\|_{L^P(B_i, \bR)} < \epsilon/(2C)$.
Hence, if $i \geq i_0$, we have
\begin{align*}
 &\|f u - f u_{\theta(k)}\|_{L^p(B_i, \bR)}\\
 &\qquad \leq \|f\|_{L^p(B_i, \bR)} \left( \|u\|_{L^\infty(B_i, \bR)} + \|u_{\theta(k)}\|_{L^\infty(B_i, \bR)}\right)\\
 &\qquad < \epsilon \rho^\delta(x_i).
\end{align*}
On the other hand, if $i < i_0$, then, for $k$ large enough, we have, by
construction of the function $\theta$,
\[
 \|f u - f u_{\theta(k)}\|_{L^p(B_i, \bR)} \leq \|f\|_{L^p(B_i, \bR)} \|u - u_{\theta(k)}\|_{L^\infty(B_i, \bR)} < \epsilon \rho^\delta(x_i).
\]
We have proven that $f u_k \to f u$ in $X^{0, p}_\delta(M, \bR)$. This
ends the proof of the lemma.
\end{proof}

\section{Local Yamabe invariant and first conformal eigenvalue}
\label{secLocalYamabe}
Let $(M, g)$ be a complete Riemannian manifold. For any measurable
subset $V \subset M$, we define the space
\[
 \Wbar^{k, p}(V, \bR) \definedas
  \{u \in W^{k, p}_0(M, \bR), u \equiv 0\text{ a.e. on } M \setminus V\}
\]
of Sobolev functions vanishing outside $V$. This set is obviously reduced
to $\{0\}$ if $V$ has Lebesgue measure zero. Yet the condition for
$\Wbar^{k, p}(V, \bR)$ to be non-trivial is more subtle, see for example
\cite[Chapter 6]{AdamsHedberg}. As a shorthand, for
any measurable $V$ we set
\begin{equation}\label{eqDefF}
 \cF(V) \definedas \Wbar^{1, 2}(V, \bR).
\end{equation}
An important ingredient in what follows is the following functional:
\begin{equation}\label{eqDefG}
 G_g(u) \definedas \int_M \left[\frac{4(n-1)}{n-2} |du|^2 + \scal~u^2\right] d\mu^g
\end{equation}
defined for all $u \in W^{1, 2}_0(M, \bR)$.
We also introduce, for any $u \in W^{1, 2}_0(M, \bR)$, $u \not\equiv 0$,
the Rayleigh and the Yamabe quotients:
\begin{subequations}
\begin{align}
 Q^R_g(u) & \definedas G_g(u) / \|u\|_{L^2(M, \bR)}^2,\label{eqRayleighQuotient}\\
 Q^Y_g(u) & \definedas G_g(u) / \|u\|_{L^N(M, \bR)}^2.\label{eqYamabeQuotient}
\end{align}
\end{subequations}
From these notions, we introduce the local first conformal eigenvalue
and the local Yamabe invariant of any measurable subset $V \subset M$ as
follows:
\begin{subequations}
\begin{align}
 \lambda_g(V) & \definedas \inf_{u \in \cF(V) \setminus \{0\}} Q^R_g(u)\label{eqLambda1},\\
 \cY_g(V) & \definedas \inf_{u \in \cF(V) \setminus \{0\}} Q^Y_g(u)\label{eqYamabe},
\end{align}
\end{subequations}
being understood that $\lambda_g(V) = \cY_g(V) = \infty$ if $\cF(V)$ is
reduced to $\{0\}$. We first state a lemma that will turn out useful later on:
\begin{lemma}[Asymptotic Poincar\'e inequality]\label{lmPoincare}
There exists a function $\epsilon = \epsilon(x) \in C^0(M, \bR)$, tending
to zero at infinity, such that for all $u \in W^{1, 2}_0(M, \bR)$, we have
\[
 \int_M \left[\frac{4(n-1)}{n-2} |du|^2 + \scal~u^2\right]d\mu^g
 \geq
 \int_M \left(\frac{n-1}{n-2} + \epsilon\right)u^2 d\mu^g.
\]
\end{lemma}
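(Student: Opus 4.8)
The plan is to deduce the inequality from a pointwise Hardy-type estimate obtained by completing a square against a well-chosen vector field, the constant being dictated by the sharp bottom of the $L^2$-spectrum of hyperbolic space. Write $c \definedas \frac{4(n-1)}{n-2}$, so $G_g(u) = c\int_M |du|^2\,d\mu^g + \int_M \scal\,u^2\,d\mu^g$, and set $r \definedas -\ln\rho$, a smooth function on $M$ tending to $+\infty$ at infinity. From $g = \rho^{-2}\gbar$, $r = -\ln\rho$, and the conformal change formula for the Laplacian one gets
\[
 |dr|^2_g = |d\rho|^2_{\gbar}, \qquad
 \Delta_g r = -\rho\,\Delta_{\gbar}\rho + (n-1)\,|d\rho|^2_{\gbar}.
\]
Since $\gbar$ extends to a $C^{l,\beta}$ metric on the compact manifold $\Mbar$ with $l \geq 2$, and $|d\rho|_{\gbar} \equiv 1$ on $\partial M$, both right-hand sides are continuous and bounded on $M$ and converge at infinity to $1$ and to $n-1$, respectively.

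First I would set $X \definedas \tfrac{n-1}{2}\,\nabla_g r$ and, for $u \in C^\infty_c(M)$, expand the nonnegative quantity $\int_M |\nabla u + uX|^2\,d\mu^g$; integrating the cross term by parts (no boundary contribution, $u$ having compact support) gives
\[
 \int_M |du|^2\,d\mu^g \ \geq\ \int_M u^2\big(\divg_g X - |X|^2_g\big)\,d\mu^g .
\]
As $X$ and $\divg_g X$ are bounded on $M$, this extends to every $u \in W^{1,2}_0(M,\bR)$ by density of $C^\infty_c(M)$ and $L^2$-continuity of the two sides. Multiplying by $c$ and adding $\int_M \scal\,u^2\,d\mu^g$ yields
\[
 G_g(u) \ \geq\ \int_M u^2\Big( c\big(\divg_g X - |X|^2_g\big) + \scal\Big)\,d\mu^g .
\]

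It then suffices to \emph{define} $\epsilon(x) \definedas c\big(\divg_g X - |X|^2_g\big)(x) + \scal(x) - \frac{n-1}{n-2}$, for which the last display is exactly the assertion. Continuity of $\epsilon$ on $M$ is immediate, since $\scal$ is continuous ($g$ being $C^{l,\beta}$ with $l \geq 2$) and $\divg_g X - |X|^2_g = \tfrac{n-1}{2}\Delta_g r - \tfrac{(n-1)^2}{4}|dr|^2_g$ is continuous by the identities above; and $\epsilon$ tends to $0$ at infinity because there $\scal \to -n(n-1)$ (as $\sec_g \to -1$) while $\divg_g X - |X|^2_g \to \tfrac{(n-1)^2}{2} - \tfrac{(n-1)^2}{4} = \tfrac{(n-1)^2}{4}$, so that $\epsilon \to \tfrac{(n-1)^3}{n-2} - n(n-1) - \tfrac{n-1}{n-2}$, which is identically zero.

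I expect the only genuinely delicate point to be the choice of the coefficient $\tfrac{n-1}{2}$ in $X$: it is forced by the requirement that the limiting algebraic expression for $\epsilon$ vanish — equivalently, it matches the sharp hyperbolic Poincar\'e constant $\tfrac{(n-1)^2}{4}$ — whereas the density argument and the conformal bookkeeping are routine once the identities for $|dr|^2_g$ and $\Delta_g r$ are established. (The argument uses $n \geq 3$, implicit in the definition of $N$.)
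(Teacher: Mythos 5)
Your proof is correct, and it is the paper's computation in a slightly different dressing: writing $v = \rho^{-\delta}u$ with $\delta = \tfrac{n-1}{2}$, one has $\nabla u + uX = \rho^{\delta}\nabla v$, so the nonnegativity of $\int_M |\nabla u + uX|^2\,d\mu^g$ is exactly the paper's identity \eqref{eqPoincare} with the manifestly nonnegative gradient term discarded. Your vector-field phrasing is cleaner and automatically selects the sharp constant $\tfrac{(n-1)^2}{4}$; in fact your $X = \tfrac{n-1}{2}\nabla r$ corresponds to $\delta = \tfrac{n-1}{2}$, which is the value that actually yields $\tfrac{(n-1)^2}{4}$, whereas the $\delta = -\tfrac{n-2}{2}$ printed in the paper's proof appears to be a slip ($\delta(n-1-\delta)$ is maximized at $\delta = \tfrac{n-1}{2}$, and the printed value gives a negative constant). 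The only thing you give up relative to the paper is the \emph{identity} rather than just the inequality: the paper reuses the exact decomposition $G_g(u) = \int_M \big[\rho^{2\delta}|d(\rho^{-\delta}u)|^2 + (\tfrac{n-1}{n-2}+\epsilon)u^2\big]\,d\mu^g$ in the proof of Proposition \ref{propSemiContinuity} to show that $G_g$ minus a compactly-localized remainder is convex. For Lemma \ref{lmPoincare} alone your inequality is all that is required, and the density of $C^\infty_c(M)$ in $W^{1,2}_0(M,\bR)$ on a complete manifold, together with the boundedness of $|X|_g$ and $\divg_g X$ on $\Mbar$ (both continuous in $\rho$ and the first jet of $\gbar$), makes the extension step routine as you say.
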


\begin{proof}
 Let $\rho > 0$ be a defining function for $\partial M$. Given $u \in W^{1, 2}_0(M, \bR)$,
we set $v = \rho^{-\delta} u$ for some $\delta$ to be chosen later and compute
\begin{align*}
\int_M |du|^2 d\mu^g
&= \int_M |d(\rho^\delta v)|^2 d\mu^g\\
&= \int_M |\delta \rho^{\delta-1} v d\rho + \rho^\delta dv|^2 d\mu^g\\
&= \int_M \left[\rho^{2\delta} |dv|^2 + 2 \delta \rho^{2\delta-1} \<d\rho, v dv\> + \rho^{2 \delta-2} v^2 |d\rho|^2\right] d\mu^g\\
&= \int_M \left[\rho^{2\delta} |dv|^2 + \frac{1}{2} \<d\rho^{2\delta}, d(v^2)\> + \rho^{2 \delta-2} v^2 |d\rho|^2\right] d\mu^g\\
&= \int_M \left[\rho^{2\delta} |dv|^2 - \frac{v^2}{2} \Delta (\rho^{2\delta}) + \rho^{2 \delta-2} v^2 |d\rho|^2\right] d\mu^g\\
&= \int_M \left[\rho^{2\delta} |dv|^2 - \rho^{2\delta-2} \left(\delta \rho \Delta \rho + \delta(\delta-1) |d\rho|^2\right)v^2\right] d\mu^g.
\end{align*}
The conformal transformation law of the Laplacian gives
\[
 \Delta \rho = \rho^2\left(\Deltabar \rho - (n-2) \frac{|d\rho|_{\gbar}^2}{\rho}\right),
\]
where $\Deltabar$ denotes the Laplacian associated to the metric $\gbar$.
Since $|d\rho|_{\gbar}^2 = 1 + o(1)$ and $\Deltabar \rho = O(1)$, we have
\[
 \Delta \rho = -(n-2) \rho + o(\rho).
\]
Further,
\[
 |d\rho|^2 = \rho^2 |d\rho|^2_{\gbar} = \rho^2 + o(\rho^2).
\]
As a consequence, we have
\[
\rho^{2\delta-2} \left(\delta \rho \Delta \rho + \delta(\delta-1) |d\rho|^2\right)
= \rho^{2\delta} \left(-(n-2) \delta + \delta(\delta-1) + \epsilon_0\right),
\]
where $\epsilon_0 = o(1)$ in a neighborhood of infinity. Choosing $\delta = -\frac{n-2}{2}$,
we get
\[
\rho^{2\delta-2} \left(\delta \rho \Delta \rho + \delta(\delta-1) |d\rho|^2\right) = \rho^{2\delta} \left[\frac{(n-1)^2}{4} + \epsilon_0\right].
\]
Thus,
\begin{equation}\label{eqPoincare}
\int_M |du|^2 d\mu^g
= \int_M \left[\rho^{2\delta} |dv|^2 + \left(\frac{(n-1)^2}{4} + \epsilon_0\right)u^2\right] d\mu^g.
\end{equation}
Finally,
\begin{align}
\int_M \left[\frac{4(n-1)}{n-2} |du|^2 + \scal~u^2\right]d\mu^g
&= \int_M \left[\rho^{2\delta} |dv|^2 + \left(\frac{(n-1)^2}{4} + \scal + \epsilon_0\right)u^2\right] d\mu^g\nonumber\\
&= \int_M \left[\rho^{2\delta} |dv|^2 + \left(\frac{(n-1)^2}{4} - n(n-1) + \epsilon\right)u^2\right] d\mu^g\nonumber\\
&= \int_M \left[\rho^{2\delta} |dv|^2 + \left(\frac{n-1}{n-2} + \epsilon\right)u^2\right] d\mu^g\label{eqConvex}
\end{align}
where $\epsilon = \epsilon_0 + \scal - n(n-1) = o(1)$ near infinity.
\end{proof}

\begin{proposition}\label{propSemiContinuity}
The functional $G_g$ defined in \eqref{eqDefG}
is sequentially weakly lower semi-continuous on $W^{1, 2}_0(M, \bR)$.
Namely, for every weakly converging sequence $(u_k)_k$,
$\displaystyle
 u_k \rightharpoonup u_\infty,
$
we have
$\displaystyle
 \liminf_{k\to \infty} G_g(u_k) \geq G_g(u_\infty).
$
\end{proposition}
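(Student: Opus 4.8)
The plan is to rewrite $G_g$ as a sum of two functionals, each of which is manifestly sequentially weakly lower semi-continuous on $W^{1, 2}_0(M, \bR)$. I would set $\delta = -\tfrac{n-2}{2}$ and, for $u \in W^{1, 2}_0(M, \bR)$, put $v \definedas \rho^{-\delta} u$. The computation carried out in the proof of Lemma \ref{lmPoincare}, culminating in \eqref{eqConvex}, gives
\[
 G_g(u) = \int_M \rho^{2\delta} |dv|^2 \, d\mu^g + \int_M \left(\frac{n-1}{n-2} + \epsilon\right) u^2 \, d\mu^g .
\]
Since that identity is established there by an integration by parts valid a priori only for $u \in C^\infty_c(M)$, the first thing I would do is extend it to all of $W^{1, 2}_0(M, \bR)$ by density: both sides are continuous for the $W^{1, 2}$ norm, because $\scal$ and $\epsilon$ are bounded on $M$, and because the elementary identity $\rho^\delta dv = du - \delta \rho^{-1} u \, d\rho$, together with the bound $|d\rho|_g \leq C \rho$ (a consequence of $|d\rho|_{\gbar}$ being bounded on the compact manifold $\Mbar$), exhibits $u \mapsto \rho^\delta dv$ as a bounded linear map from $W^{1, 2}_0(M, \bR)$ into the space of square-integrable $1$-forms.

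Next I would check that each of the two terms is sequentially weakly lower semi-continuous. The first, $u \mapsto \int_M \rho^{2\delta} |dv|^2 \, d\mu^g = \| du - \delta \rho^{-1} u \, d\rho \|_{L^2}^2$, is the squared $L^2$ norm of the image of $u$ under the bounded linear operator just described, hence weakly lower semi-continuous. For the second term, I would fix $\rho_0 > 0$ small enough that $\tfrac{n-1}{n-2} + \epsilon \geq 0$ on $M_{\rho_0}$, which is possible because $\epsilon$ tends to $0$ at infinity, and split the integral over $M_{\rho_0}$ and over $\{\rho \geq \rho_0\}$. On $M_{\rho_0}$ the weight is bounded and non-negative, so that contribution equals $\big\| \sqrt{\tfrac{n-1}{n-2} + \epsilon}\, u \big\|_{L^2(M_{\rho_0})}^2$ and is again weakly lower semi-continuous by the same operator argument. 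The set $\{\rho \geq \rho_0\}$ is a compact subset of $M$, being closed in $\Mbar$ and disjoint from $\partial M$; a weakly convergent sequence in $W^{1, 2}_0(M, \bR)$ therefore restricts to a weakly convergent sequence on any relatively compact open set containing it, and hence converges strongly in $L^2$ there by Rellich's theorem, so, the weight being bounded on $\{\rho \geq \rho_0\}$, that contribution is in fact weakly \emph{continuous}. Adding the pieces proves the proposition.

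The step I expect to be the real obstacle is exactly the potential term $\int_M \scal\, u^2 \, d\mu^g$: near infinity $\scal$ is close to the negative constant $-n(n-1)$ and does not decay, so taken by itself this term is only weakly \emph{upper} semi-continuous along weakly convergent sequences --- apply Fatou to $-\scal\, u^2 \geq 0$ near infinity --- which is the wrong direction, and indeed the proposition is false for a general complete metric. The asymptotic Poincar\'e identity \eqref{eqConvex} is precisely what repairs this, trading the dangerous share of the potential against a non-negative part of the gradient energy and leaving behind the non-negative weight $\tfrac{n-1}{n-2} + \epsilon$ near infinity. The remaining ingredients --- the operator and convexity arguments, Rellich compactness on the compact piece, and the density argument --- are routine.
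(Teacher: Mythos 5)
Your proof is correct and rests on the same key identity \eqref{eqConvex} as the paper's, but the decomposition you use afterward is genuinely different and, I would argue, slightly cleaner. The paper isolates $H(u)=\int_M \epsilon\,u^2\,d\mu^g$ and proves that $H$ is sequentially \emph{weakly continuous} on all of $M$; this is done by a contradiction argument that shows $\sqrt{\epsilon_\pm}\,u_k \to \sqrt{\epsilon_\pm}\,u_\infty$ strongly in $L^2(M)$, which requires combining Rellich compactness on bounded sets with the decay $\epsilon=o(1)$ at infinity to control the tails. You avoid having to establish weak continuity on the whole non-compact manifold by keeping the weight $\tfrac{n-1}{n-2}+\epsilon$ together and splitting the integral geographically rather than by sign of $\epsilon$: near infinity the weight is non-negative, so that piece is the squared $L^2$ norm of a bounded operator and therefore weakly lower semi-continuous (no continuity needed); on the compact remainder you get full weak continuity from Rellich alone. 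Both routes use the same essential ingredients (the asymptotic Poincar\'e identity, Rellich compactness, and the decay of $\epsilon$), but yours localizes the compactness argument to where it is actually required and dispenses with the $\epsilon_\pm$ bookkeeping. Your prefatory density argument extending \eqref{eqConvex} from $C^\infty_c$ to $W^{1,2}_0$, using $|d\rho|_g = O(\rho)$ to see that $u\mapsto \rho^\delta\,d(\rho^{-\delta}u)$ is bounded, is a detail the paper leaves implicit, and it is good that you noticed it.
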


\begin{proof}
 We use formula \eqref{eqConvex} which shows that
\[
u \mapsto G_g(u) - \int_M \epsilon u^2 d\mu^g
 = \int_M \left[ |\rho^\delta d(\rho^{-\delta} u)|^2 + \frac{n-1}{n-2} u^2\right] d\mu^g
\]
is convex. Since it is also strongly continuous, it is
weakly lower semi-continuous. So we just have to prove that
\[
 H(u) \definedas \int_M \epsilon u^2 d\mu^g
\]
is sequentially weakly continuous. Let $(u_k)_{k \geq 0}$ be a sequence of elements
of $W^{1, 2}_0(M, \bR)$ that converges weakly to $u_\infty \in W^{1, 2}_0(M, \bR)$.
We assume, by contradiction that $H(u_k)$ does
not converges to $H(u_\infty)$. Upon extracting a subsequence, we can assume
that there exists $\mu_0 > 0$ such that $|H(u_k) - H(u_\infty)| > \mu_0$
for all $k \geq 0$. Since for every bounded set $\Omega$, the map
\[
 \begin{array}{ccc}
  W^{1, 2}_0(M, \bR) & \to & L^2(\Omega, \bR)\\
  u & \mapsto & u\vert_{\Omega}
 \end{array}
\]
is compact, and $(u_k)_k$ is bounded in $W^{1, 2}_0(M, \bR)$ (hence in $L^2(M, \bR)$),
we can assume further that $\sqrt{\epsilon_\pm} u_k$ converges strongly
in $L^2(M, \bR)$, where
\[
 \epsilon_+ = \max \{0, \epsilon\}, \quad \epsilon_- = -\min \{0, \epsilon\}.
\]
However, for every $v \in L^2(M, \bR)$, we have, due to the weak convergence of $(u_k)_k$,
\[
 \int_M v (\lim_{k\to\infty} \epsilon_\pm u_k) d\mu^g = \lim_{k\to\infty} \int_M v \epsilon_\pm u_k d\mu^g = \int_M v \epsilon_\pm u_\infty d\mu^g,
\]
so
\[
 \lim_{k\to\infty} \epsilon_\pm u_k = \epsilon_{\pm} u_\infty,
\]
where all limits of functions are understood in $L^2(M, \bR)$.
On the other hand,
\begin{align*}
\lim_{k \to \infty} H(u_k)
 &= \lim_{k \to \infty} \left(\int_M \epsilon_+ u_k^2 d\mu^g - \int_M \epsilon_- u_k^2 d\mu^g\right)\\
 &= \int_M \epsilon_+ u_\infty^2 d\mu^g - \int_M \epsilon_- u_\infty^2 d\mu^g\\
 &= H(u_\infty),
\end{align*}
contradicting the fact that $|H(u_k) - H(u_\infty)| > \mu_0$.
\end{proof}

\begin{proposition}\label{propYamabe}
Given any measurable set $V \subset M$, $\lambda_g(V)$ and $\cY_g(V)$ have
the same sign (i.e. they are either both positive, both negative or both
zero).
\end{proposition}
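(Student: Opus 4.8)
The plan is to show the two infima are linked by exploiting the Sobolev inequality that holds on balls of bounded geometry (the uniform constant $s$ from the proof of Lemma~\ref{lmMapping}, or rather its global counterpart on $W^{1,2}_0(M,\bR)$). Concretely, recall that an asymptotically hyperbolic manifold has bounded geometry, so there is a constant $s>0$ with $\|u\|_{L^N(M,\bR)}^2 \leq s\,\|u\|_{W^{1,2}_0(M,\bR)}^2$ for all $u\in W^{1,2}_0(M,\bR)$; combining this with the asymptotic Poincar\'e inequality of Lemma~\ref{lmPoincare} (which controls $\|u\|_{L^2}^2$ and the gradient term by $G_g(u)$ up to lower-order corrections, hence bounds $\|u\|_{W^{1,2}_0}^2$ by a constant times $G_g(u) + \|u\|_{L^2}^2$) one gets a two-sided comparison between $\|u\|_{L^2(M,\bR)}^2$ and $\|u\|_{L^N(M,\bR)}^2$ valid on any subspace on which $G_g$ is bounded. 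This is the mechanism behind the sign comparison.

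First I would dispose of the degenerate case: if $\cF(V) = \{0\}$ then by convention $\lambda_g(V) = \cY_g(V) = \infty$ and there is nothing to prove, so assume $\cF(V)$ is non-trivial. Next, observe that the signs of $\lambda_g(V)$ and $\cY_g(V)$ are each determined by the sign of $G_g$ on $\cF(V)$: since both quotients $Q^R_g$ and $Q^Y_g$ are ratios of $G_g(u)$ by a strictly positive quantity ($\|u\|_{L^2}^2$, resp.\ $\|u\|_{L^N}^2$) for $u \in \cF(V)\setminus\{0\}$, the sign of each quotient equals the sign of $G_g(u)$. Hence:
\begin{itemize}[leftmargin=*]
\item $\lambda_g(V) \geq 0 \iff G_g(u) \geq 0$ for all $u \in \cF(V)$, and likewise $\cY_g(V) \geq 0 \iff G_g(u)\geq 0$ for all $u\in\cF(V)$. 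This already gives that $\lambda_g(V)\geq 0 \iff \cY_g(V)\geq 0$, so ``$\lambda_g(V) = 0$ or $>0$'' is equivalent to ``$\cY_g(V) = 0$ or $>0$''.
\item It remains to separate the zero case from the strictly positive case, i.e.\ to show $\lambda_g(V) > 0 \iff \cY_g(V) > 0$.
\end{itemize}

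For that last equivalence I would argue as follows. Suppose $\lambda_g(V) > 0$; then for all $u \in \cF(V)$, $G_g(u) \geq \lambda_g(V)\|u\|_{L^2}^2 \geq 0$, and moreover from the Poincar\'e inequality (Lemma~\ref{lmPoincare}) and the fact that $\epsilon \to 0$ at infinity, $G_g(u)$ also dominates a fixed positive multiple of $\|u\|_{W^{1,2}_0}^2$ minus a compactly-supported $L^2$ remainder; combining $G_g(u) \geq \lambda_g(V)\|u\|^2_{L^2}$ with this, one bounds $\|u\|_{W^{1,2}_0}^2 \leq C\, G_g(u)$ for all $u\in\cF(V)$ with $C$ depending on $\lambda_g(V)$, and then the Sobolev inequality gives $\|u\|_{L^N}^2 \leq s\|u\|_{W^{1,2}_0}^2 \leq sC\, G_g(u)$, whence $\cY_g(V) \geq 1/(sC) > 0$. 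Conversely suppose $\cY_g(V) > 0$; then $G_g(u) \geq \cY_g(V)\|u\|_{L^N}^2$, and the same Poincar\'e plus Sobolev bookkeeping (now using $\|u\|_{L^N}^2$ to absorb the compactly supported remainder, since on a fixed compact set $L^N$ dominates $L^2$) yields $\|u\|_{W^{1,2}_0}^2\leq C'G_g(u)$, hence $\|u\|_{L^2}^2 \leq C'' G_g(u)$ and $\lambda_g(V)\geq 1/C'' > 0$. The main obstacle is the careful handling of the remainder term coming from the region where the Poincar\'e constant $\epsilon$ fails to be positive: one must check that on that fixed compact region both $\|u\|_{L^2}^2$ and $\|u\|_{L^N}^2$ are controlled by $\|u\|_{W^{1,2}_0}^2$ (bounded geometry, Sobolev embedding on a compact set) so the absorption argument closes, and that the constants obtained are finite — uniformity across $\cF(V)$ is automatic since the estimates are pointwise-in-$u$ with $u$-independent constants.
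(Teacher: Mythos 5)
Your proof is correct, and it uses exactly the same ingredients as the paper: the global Sobolev inequality $\|u\|_{L^N}^2 \lesssim \|du\|_{L^2}^2$, the asymptotic Poincar\'e inequality of Lemma~\ref{lmPoincare} to produce a compact set $K$ with $\|u\|_{L^2(M\setminus K)}^2 \leq G_g(u) + C\|u\|_{L^2(K)}^2$, and H\"older's inequality on $K$ to pass between $L^2(K)$ and $L^N(K)$ norms. The only difference is organizational: the paper argues by contradiction with minimizing sequences (normalizing $\|u_k\|_{L^N}=1$ or $\|u_k\|_{L^2}=1$ and showing the other quotient also tends to zero), while you run the inequalities directly for each $u \in \cF(V)$ and deduce explicit lower bounds such as $\cY_g(V) \geq 1/(sC(\lambda_g(V)))$ and $\lambda_g(V) \geq 1/C'(\cY_g(V))$. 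Your formulation is a bit cleaner and, unlike the paper's, yields quantitative two-sided comparability of $\lambda_g(V)$ and $\cY_g(V)$ when they are positive; the sequence-based version buys nothing extra here. One small stylistic remark: in the direction $\lambda_g(V)>0\Rightarrow\cY_g(V)>0$ you invoke Lemma~\ref{lmPoincare} and the compact remainder, but this is not actually needed --- from $\frac{4(n-1)}{n-2}\|du\|_{L^2}^2 = G_g(u) - \int_M \scal\,u^2 \leq G_g(u) + \|\scal\|_{L^\infty}\|u\|_{L^2}^2 \leq \bigl(1+\|\scal\|_{L^\infty}/\lambda_g(V)\bigr)G_g(u)$ and $\|u\|_{L^N}^2 \leq s\|du\|_{L^2}^2$ one concludes immediately, with no localization; the Poincar\'e lemma and the compact set $K$ are only genuinely required for the converse direction.
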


\begin{proof}
We first remark that
\[
 \cY_g(V) < 0 \Leftrightarrow \exists u \in \cF(V), G_g(u) < 0 \Leftrightarrow \lambda_g(V) < 0.
\]
Next assume that $\cY_g(V) = 0$. We are to prove that $\lambda_g(V) = 0$.
There exists a sequence $(u_k)_{k \geq 0}$ of functions belonging to
$\cF(V)$ such that $\|u_k\|_{L^N(M, \bR)} = 1$, $0 \leq G_g(u_k)$,
$G_g(u_k) \to O$. From the (global) Sobolev embedding theorem
\cite[Lemma 3.6]{LeeFredholm}, there exists a constant $s > 0$ such that
\begin{equation}\label{eqSobolevAH}
 \|u\|_{L^N(M, \bR)}^2 \leq s \|du\|_{L^2(M, \bR)}^2
\end{equation}
for all $u \in W^{1, 2}_0(M, \bR)$. In particular, we have that
$\|du_k\|_{L^2(M, \bR)}^2 \geq s$. For $k$ large enough, we have that
\[
 G_g(u_k) = \int_M\left[\frac{4(n-1)}{n-2} |du_k|^2 + \scal~u_k^2 \right]d\mu^g \leq \frac{2(n-1)}{n-2} s.
\]
As a consequence,
\[
 -\|\scal\|_{L^\infty(M, \bR)} \|u_k\|_{L^2(M, \bR)}^2 \leq -\frac{2(n-1)}{n-2} s.
\]
This shows that $\|u_k\|_{L^2(M, \bR)}$ is bounded from below by a positive
constant. Hence, $Q^R_g(u_k) \to 0$. Since $\lambda_g(V) \geq 0$ from the
first part of the proof, we have that $\lambda_g(V) = 0$.

We now prove that, conversely, if $\lambda_g(V) = 0$, we have
$\cY_g(V) = 0$. We select a sequence of functions $u_k \in \cF(V)$ such
that $G_g(u_k) \geq 0$ and $G_g(u_k) \to_{k \to \infty} 0$.
From Lemma \ref{lmPoincare}, we have that there exists a compact subset
$K \ssubset M$ and a constant $C > 0$ such that
\[
 \int_{M \setminus K} u^2 d\mu^g \leq G_g(u) + C \int_K u^2 d\mu^g.
\]
Indeed, one can choose for example
\[
 K = \epsilon^{-1}(-\infty, -1/(n-2)) \quad\text{and}\quad C = \left\|\epsilon + \frac{n-1}{n-2}\right\|_{L^\infty(K, \bR)}.
\]
As a consequence,
\[
 1 = \int_M u_k^2 d\mu^g \leq G_g(u_k) + (C+1) \int_{\Omega} u_k^2 d\mu^g.
\]
This shows in particular that, for $k$ large enough, $\|u_k\|_{L^2(K, \bR)}$
is bounded from below by a positive constant. Now we have that
\[
 \|u_k\|_{L^2(K, \bR)} \leq \|u_k\|_{L^N(K, \bR)} \vol_g(K)^{1/n} \leq \|u_k\|_{L^N(M, \bR)} \vol_g(K)^{1/n}
\]
which proves that $\|u_k\|_{L^N(M, \bR)}$ is bounded from below (for
$k$ large enough). We then conclude that $Q^Y_g(u_k) \to 0$. Thus,
$\cY_g(V) = 0$.

Finally remark that, from everything we have proven before, $\cY_g(V) > 0$
iff $\lambda_g(V) > 0$. This ends the proof of the proposition.
\end{proof}

The interest for working with $\cY_g(V)$ instead of $\lambda_g(V)$ comes
from the following result:

\begin{proposition}\label{propConfInvariance}
Assume that $g$ and $h$ are two conformally related metrics,
$h = \phi^{N-2} g$, for some $\phi \in X^{2, p}_0(M, \bR)$,
$p > n/2$, $\phi > 0$ with $\phi^{-1} \in X^{2, p}_0(M, \bR)$,
then for any measurable $V$ we have
\[
 \cY_g(V) = \cY_h(V).
\]
\end{proposition}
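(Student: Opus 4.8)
The plan is to exploit the conformal covariance of the conformal Laplacian $L_g = -\frac{4(n-1)}{n-2}\Delta_g + \scal_g$, which underlies the functional $G_g$. Recall the classical pointwise identity: if $h = \phi^{N-2} g$ then for any smooth function $w$,
\[
 L_h w = \phi^{-(N-1)} L_g(\phi w).
\]
Integrating against $w$ and using the divergence theorem (all functions below compactly supported, or in $W^{1,2}_0$, so no boundary terms), this gives the integral identity
\[
 G_h(w) = \int_M w\, L_h w \; d\mu^h = \int_M (\phi w)\, L_g(\phi w)\; d\mu^g = G_g(\phi w),
\]
where one uses $d\mu^h = \phi^{N} d\mu^g$ together with $N - (N-1) = 1$ to cancel the conformal factors. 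The first step is therefore to establish this identity rigorously for all $w \in \cF(V)$, i.e. to check that $w \mapsto \phi w$ is a bounded bijection from $\cF(V)$ to itself, that it extends the pointwise computation to $W^{1,2}_0$ by density, and that no boundary contribution appears. Here the hypotheses $\phi, \phi^{-1} \in X^{2,p}_0(M,\bR)$ with $p > n/2$ enter: by the Sobolev embedding for weighted local Sobolev spaces, $\phi$ and $\phi^{-1}$ are bounded continuous functions on $M$ bounded away from $0$ and $\infty$, and multiplication by such a function preserves $W^{1,2}_0(M,\bR)$; moreover multiplication by $\phi$ sends functions vanishing a.e. outside $V$ to functions vanishing a.e. outside $V$ (and the inverse multiplication by $\phi^{-1}$ does the reverse), so $\phi \cdot \cF(V) = \cF(V)$.

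The second step handles the denominator in the Yamabe quotient. Since $d\mu^h = \phi^{N} d\mu^g$ and the $L^N$ norm is taken with exponent $N$, we get
\[
 \|w\|_{L^N(M,\bR)}^N \text{ (w.r.t. } h\text{)} = \int_M |w|^N \phi^N d\mu^g = \int_M |\phi w|^N d\mu^g = \|\phi w\|_{L^N(M,\bR)}^N \text{ (w.r.t. } g\text{)},
\]
so $\|w\|_{L^N(M,h)} = \|\phi w\|_{L^N(M,g)}$. Combining with Step 1, for every $w \in \cF(V)\setminus\{0\}$ we obtain $Q^Y_h(w) = Q^Y_g(\phi w)$. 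Since $w \mapsto \phi w$ is a bijection of $\cF(V)\setminus\{0\}$ onto itself, taking the infimum over $w$ on the left equals the infimum over $\phi w$ on the right, which is $\cY_g(V)$; hence $\cY_h(V) = \cY_g(V)$. The case where $\cF(V) = \{0\}$ is trivial since both sides are $\infty$ by convention, and this case is stable under the bijection as well.

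The main obstacle — such as it is — is the first step: justifying the integration by parts and the density argument in the weighted, non-compact setting, since $\phi$ is only $X^{2,p}_0$ rather than smooth, so $L_g(\phi w)$ and $L_h w$ must be interpreted in a weak sense, and one must make sure that the quadratic form identity $G_h(w) = G_g(\phi w)$ holds for genuine $W^{1,2}_0$ functions and not merely for smooth compactly supported ones. The cleanest route is to prove it first for $w \in C^\infty_c(M,\bR)$ by the pointwise conformal identity plus the ordinary divergence theorem, writing $G_h(w) = \int_M\big[\frac{4(n-1)}{n-2}|dw|^2_h + \scal_h w^2\big]d\mu^h$ directly in terms of $g$ using $|dw|^2_h = \phi^{-(N-2)}|dw|^2_g$ and the conformal transformation law $\scal_h = \phi^{-(N-1)}L_g\phi = \phi^{1-N}\big(\scal_g\,\phi - \frac{4(n-1)}{n-2}\Delta_g\phi\big)$; expanding $|d(\phi w)|^2_g = \phi^2|dw|^2_g + 2\phi w\langle d\phi, dw\rangle_g + w^2|d\phi|^2_g$ and integrating by parts the cross term against $\Delta_g\phi$ produces exactly $G_g(\phi w)$. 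Then one passes to general $w \in \cF(V)$ by approximating $w$ in $W^{1,2}_0(M,\bR)$ by $w_j \in C^\infty_c(M,\bR)$ and noting that both $G_g(\phi\,\cdot\,)$ and $G_h(\cdot)$ are continuous on $W^{1,2}_0$ in the relevant norms (using the boundedness of $\phi$, $\phi^{-1}$ and of $\scal_g$, $\scal_h$, the latter bounded because asymptotically hyperbolic metrics have bounded scalar curvature); continuity of the $L^N$ norms under multiplication by the bounded function $\phi$ is immediate. Everything else is routine.
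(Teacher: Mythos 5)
Your overall route coincides with the paper's: both establish the identity $G_h(u) = G_g(\phi u)$ via the conformal transformation laws and the fact that $d\mu^h = \phi^N d\mu^g$, observe that $\|u\|_{L^N_h} = \|\phi u\|_{L^N_g}$, and conclude by noting that $u \mapsto \phi u$ is a bijection of $\cF(V)$. However, there is a genuine gap in the step you yourself flag as the crucial one. You argue that since $\phi$ and $\phi^{-1}$ are bounded continuous functions (which indeed follows from $X^{2,p}_0 \hookrightarrow C^0$ for $p > n/2$), ``multiplication by such a function preserves $W^{1,2}_0(M,\bR)$.'' That claim is false in general: a function that is bounded with bounded inverse can have an unbounded, even locally non-$L^2$, gradient, in which case $d(\phi u) = \phi\, du + u\, d\phi$ need not lie in $L^2$. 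Boundedness of $\phi$ only handles the terms $\phi u$ and $\phi\, du$; the term $u\, d\phi$ is the real issue. This is exactly where the paper uses the full $X^{2,p}_0$-regularity of $\phi$ rather than just its boundedness: it invokes Lemma \ref{lmMapping} with $f = |d\phi|^2$ to control $\int |d\phi|^2 u^2$, and the hypothesis $p > n/2$ is precisely what makes $|d\phi|^2 \in X^{0,p'}_0$ with $p' > n/2$ (via the Sobolev embedding $X^{1,p}_0 \hookrightarrow X^{0,p^*}_0$, $p^* = np/(n-p)$, and $p^*/2 > n/2 \iff p > n/2$). Your justification, as written, does not deliver this and would not survive scrutiny.

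A secondary issue appears in your closing density argument: you assert continuity of $G_h$ on $W^{1,2}_0$ using boundedness of $\scal_h$. But $\scal_h = \phi^{1-N}\bigl(\scal_g\,\phi - \frac{4(n-1)}{n-2}\Delta_g\phi\bigr)$ involves $\Delta_g\phi$, and $\phi \in X^{2,p}_0$ with $n/2 < p \leq n$ only gives $\Delta_g\phi$ locally in $L^p$, not in $L^\infty$; so $\scal_h$ may be unbounded under the stated hypotheses. The paper avoids this by never treating $\scal_h$ as a bounded coefficient: once one knows that $u \mapsto \phi u$ is a bounded automorphism of $W^{1,2}_0$, the identity $G_h(u) = G_g(\phi u)$ (where the right side is manifestly well-defined and continuous) is taken as the definition of the left side, and the continuity of $G_h$ is inherited rather than established independently.
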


\begin{proof}
The proof is a simple calculation. Given any $u \in W^{1, 2}_0(M, \bR)$, we
have
\begin{align*}
G_h(u)
 &= \int_M \left[\frac{4(n-1)}{n-2} |du|_h^2 + \scal^h~u^2\right] d\mu^h\\
 &= \int_M \left[\frac{4(n-1)}{n-2} \phi^{2-N} |du|_g^2 + \left(-\frac{4(n-1)}{n-2} \Delta^g \phi + \scal^g~\phi\right) \phi^{1-N} u^2\right] \phi^N d\mu^g\\
 &= \int_M \left[\frac{4(n-1)}{n-2} \phi^2 |du|_g^2 + \left(-\frac{4(n-1)}{n-2} \Delta^g \phi + \scal^g~\phi\right) \phi u^2\right] d\mu^g\\
 &= \int_M \left[\frac{4(n-1)}{n-2} \left(\phi^2 |du|_g^2 - (\phi \Delta^g \phi) u^2\right) + \scal^g~(\phi u)^2\right] d\mu^g\\
 &= \int_M \left[\frac{4(n-1)}{n-2} \left(\phi^2 |du|_g^2 + \<d\phi, d(\phi u^2)\>_g\right) + \scal^g~(\phi u)^2\right] d\mu^g\\
 &= \int_M \left[\frac{4(n-1)}{n-2} \left(\phi^2 |du|_g^2 + u^2 |d\phi|^2_g + 2 \<\phi d\phi, u du\>_g\right) + \scal^g~(\phi u)^2\right] d\mu^g\\
 &= \int_M \left[\frac{4(n-1)}{n-2} |d(\phi u)|_g^2 + \scal^g~(\phi u)^2\right] d\mu^g\\
 &= G_g(\phi u).
\end{align*}
Similarly,
\[
 \|u\|_{L^N_h} = \left(\int_M u^N d\mu^h\right)^{1/N} = \left(\int_M u^N \phi^N d\mu^g\right)^{1/N} = \|\phi u\|_{L^N_g}.
\]
So
\[
 Q^\cY_h(u) = Q^\cY_g(\phi u).
\]
Since $\phi$ is bounded away from zero, multiplication by $\phi$ defines
an automorphism of $\cF(V)$. Indeed, $\phi, \phi^{-1} \in L^\infty(M, \bR)$,
so for any $u \in W^{1, 2}_0(M, \bR)$,
we have $\phi u \in L^2(M, \bR)$ and $\phi du \in L^2(M, T^*M)$. Since
$d(\phi u) = \phi du + u d\phi$, we only have to show that
$u d\phi \in L^2(M, T^*M)$. This follows at once from Lemma \ref{lmMapping}
applied to $f \equiv |d\phi|^2$.

Hence,
\[
 \cY_g(V) = \inf_{u \in \cF(V)} Q^\cY_g(u) = \inf_{u \in \cF(V)} Q^\cY_g(\phi u) = \inf_{u \in \cF(V)} Q^\cY_h(u) = \cY_h(V).
\]
\end{proof}

\section{Prescribing non-positive scalar curvature on AH manifolds}
\label{secMain}
In thi section, we prove the main result of this paper:
\begin{theorem}\label{thmA}
Let $(M, g)$ be a $C^{l, \beta}$-asymptotically hyperbolic manifold with
$l \in \bN$, $l \geq 2$, $\beta \in [0, 1)$. Assume given
$\hscal \leq 0$ such that $\hscal - \scal \in X^{0, p}_\delta(M, \bR)$,
with $p > \max\{2, \frac{n}{2}\}$ and $\delta \in (0, n)$. The following
assertions are equivalent:
\begin{enumerate}
\renewcommand{\theenumi}{\roman{enumi}}
\renewcommand{\labelenumi}{\theenumi.}
 \item\label{itExists} there exists a positive function $\phi > 0$,
$\phi-1 \in X^{2, p}_\delta(M, \bR)$, such that the metric
$\ghat \definedas \phi^{N-2} g$ has scalar curvature $\hscal$,
 \item\label{itPositive} the set 
\begin{equation}\label{eqDefZ}
 \cZ \definedas \{x \in M, \hscal(x) = 0\}
\end{equation}
has positive Yamabe invariant: $\cY_g(\cZ) > 0$
\end{enumerate}
Further, the function $\phi$ is then unique.
\end{theorem}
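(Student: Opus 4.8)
The plan is to prove the equivalence by establishing the two implications, together with uniqueness; $(\ref{itExists})\Rightarrow(\ref{itPositive})$ is a soft consequence of what has been developed, whereas $(\ref{itPositive})\Rightarrow(\ref{itExists})$ carries the weight. For $(\ref{itExists})\Rightarrow(\ref{itPositive})$: given $\phi$ as in $(\ref{itExists})$, I would first record that $\phi-1\in X^{2,p}_\delta\subset X^{2,p}_0$, that $\phi$ is continuous with $\phi\to1$ at infinity so $0<\inf_M\phi\leq\sup_M\phi<\infty$, and that $\phi^{-1}-1\in X^{2,p}_\delta$ by the multiplication property of the weighted local Sobolev spaces; Proposition~\ref{propConfInvariance} then applies to $\ghat\definedas\phi^{N-2}g$ and gives $\cY_g(\cZ)=\cY_{\ghat}(\cZ)$. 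For any $u\in\cF(\cZ)$ the scalar curvature $\scal^{\ghat}=\hscal$ of $\ghat$ vanishes on $\cZ$ while $u$ vanishes a.e.\ off $\cZ$, so the potential term of $G_{\ghat}(u)$ is an $L^1$ function (Lemma~\ref{lmMapping} applied to $\hscal-\scal$) that is zero a.e.; the conformal transformation laws give $G_{\ghat}(u)=\frac{4(n-1)}{n-2}\int_M\phi^2|du|_g^2\,d\mu^g$ and $\|u\|_{L^N_{\ghat}}=\|\phi u\|_{L^N_g}$, and bounding $\phi$ above and below and invoking the global Sobolev inequality~\eqref{eqSobolevAH} for $g$ bounds $Q^Y_{\ghat}(u)$ below by a positive constant independent of $u$. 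Hence $\cY_g(\cZ)=\cY_{\ghat}(\cZ)>0$.

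For $(\ref{itPositive})\Rightarrow(\ref{itExists})$ I would approximate from below. For $k\geq1$ set $\hscal_k\definedas\hscal-\tfrac1k\rho^\delta$; then $\hscal_k<0$ on $M$ and $\hscal_k-\scal\in X^{0,p}_\delta$ with norm bounded independently of $k$, so the known existence theory for strictly negative prescribed scalar curvature on asymptotically hyperbolic manifolds \cite{AvilesMcowen,AnderssonChruscielFriedrich,Delay,Gicquaud} gives a unique $\phi_k>0$, $\phi_k-1\in X^{2,p}_\delta$, with $-\frac{4(n-1)}{n-2}\Delta\phi_k+\scal\,\phi_k=\hscal_k\phi_k^{N-1}$. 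Since $t\mapsto\hscal_k t^{N-1}$ is non-increasing on $(0,\infty)$ and $\hscal_1\leq\hscal_k\leq\hscal_{k+1}$, a comparison principle (proved exactly as in the uniqueness step below, after the conformal change $\phi_k^{N-2}g$ and via the maximum principle) gives $0<\phi_1\leq\phi_k\leq\phi_{k+1}$, so $\phi_\infty\definedas\lim_k\phi_k\geq\phi_1>0$ exists pointwise, a priori possibly $+\infty$. Everything reduces to the uniform bound $\sup_M\phi_k\leq C$: fixing a compact set $K$ containing $\cZ$ and $\{\scal\geq-c\}$ (with $c>0$ small), on $M\setminus K$ one has $\scal\leq-c$ and $\hscal$ bounded away from $0$, so a suitable constant is a supersolution there and the maximum principle shows that if $m_k\definedas\sup_M\phi_k\to\infty$ then the maximum of $\phi_k$ is eventually attained at some $x_k\in K$, with $\phi_k/m_k\leq1$ on all of $M$.

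To rule this out, rescale $\psi_k\definedas\phi_k/m_k\in(0,1]$, so $-\frac{4(n-1)}{n-2}\Delta\psi_k+\scal\,\psi_k=\hscal_k m_k^{N-2}\psi_k^{N-1}\leq0$, and test against $\zeta^2\psi_k$ for a cutoff $\zeta$ equal to $1$ on a neighbourhood of $K$ with $\supp\zeta$ compact, so that $\supp d\zeta$ is a compact subset of $\{\hscal<0\}$; using $\hscal_k\leq0$ this yields $G_g(\zeta\psi_k)\leq C_\zeta$ and $m_k^{N-2}\int_M|\hscal|\,\zeta^2\psi_k^N\leq C$. The first bound makes $\zeta\psi_k$ bounded in $W^{1,2}_0(M,\bR)$, so along a subsequence $\zeta\psi_k\rightharpoonup Y$ in $W^{1,2}_0(M,\bR)$, strongly in $L^2_{\loc}$ and a.e.; since $m_k\to\infty$ the second bound forces $\int_M|\hscal|\,\zeta^2\psi_k^N\to0$, hence $\hscal\,Y=0$ a.e.\ (Fatou) and $Y\in\cF(\cZ)$; moreover $\psi_k\to0$ in $L^N(\supp d\zeta)$, so $\liminf_k G_g(\zeta\psi_k)\leq0$ and $G_g(Y)\leq0$ by Proposition~\ref{propSemiContinuity}; finally $\Delta\psi_k\geq\frac{n-2}{4(n-1)}\scal\,\psi_k\geq-C$ on compact sets, so the local maximum estimate for subsolutions applied at $x_k$ (where $\psi_k=1$) gives $\int_{B_r(x_k)}\psi_k^2\geq c_0>0$ for a fixed small $r$, whence $Y\not\equiv0$. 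Thus $Y\in\cF(\cZ)\setminus\{0\}$ with $Q^R_g(Y)\leq0$, so $\lambda_g(\cZ)\leq0$ and $\cY_g(\cZ)\leq0$ by Proposition~\ref{propYamabe}, contradicting $(\ref{itPositive})$. Hence $\sup_M\phi_k\leq C$; interior elliptic estimates then bound $\phi_k$ in $X^{2,p}_{\loc}$, so $\phi_\infty$ is finite, positive and solves $-\frac{4(n-1)}{n-2}\Delta\phi_\infty+\scal\,\phi_\infty=\hscal\,\phi_\infty^{N-1}$, and $\phi_\infty-1\in X^{2,p}_\delta$ follows from the standard barrier/Fredholm arguments on asymptotically hyperbolic manifolds (the range $\delta\in(0,n)$ being the admissible one, the linearisation at infinity being $-\Delta+n$ up to a positive factor, with indicial roots $-1$ and $n$). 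This proves $(\ref{itExists})$.

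For uniqueness, suppose $\phi_1,\phi_2$ are both as in $(\ref{itExists})$; set $\ghat_1\definedas\phi_1^{N-2}g$ (of scalar curvature $\hscal$) and write $\phi_2=v\phi_1$. Then $\phi_2^{N-2}g=v^{N-2}\ghat_1$ having scalar curvature $\hscal$ is equivalent to $-\frac{4(n-1)}{n-2}\Delta^{\ghat_1}v=\hscal\,v\,(v^{N-2}-1)$, with $v>0$ and $v-1=(\phi_2-\phi_1)/\phi_1\in X^{2,p}_\delta$ tending to $0$ at infinity. Writing $v^{N-2}-1=(v-1)h(v)$ with $h>0$ continuous, $w\definedas v-1$ satisfies $-\frac{4(n-1)}{n-2}\Delta^{\ghat_1}w+\bigl(-\hscal\,v\,h(v)\bigr)w=0$ with non-negative zeroth-order coefficient---here $\hscal\leq0$ is used---so the maximum principle for such operators on the complete manifold $(M,\ghat_1)$, applied to the continuous function $w$ vanishing at infinity, forces $w\equiv0$, i.e.\ $\phi_1=\phi_2$; the same computation with ``solution'' replaced by ``supersolution'' gives the comparison $\phi_k\leq\phi_{k+1}$ invoked above. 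The step I expect to be the genuine obstacle is the uniform bound $\sup_M\phi_k\leq C$: excluding blow-up of $(\phi_k)$ is precisely where hypothesis $(\ref{itPositive})$ is consumed, through the weak lower semi-continuity of $G_g$ (Proposition~\ref{propSemiContinuity}) and the sign equivalence $\lambda_g\sim\cY_g$ (Proposition~\ref{propYamabe}), and it hinges on the concentration analysis of the rescaled sequence---its non-triviality via a local maximum estimate, and its concentration on $\cZ$ via the Fatou argument on the nonlinear term.
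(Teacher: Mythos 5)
Your proof of $(\ref{itExists})\Rightarrow(\ref{itPositive})$ coincides with the paper's, and your uniqueness argument (working with the ratio $v=\phi_2/\phi_1$ after a conformal change) is a minor variant of the paper's Proposition~\ref{propMaximumPrinciple}, which instead works with $\log\phi_i$. Your proof of $(\ref{itPositive})\Rightarrow(\ref{itExists})$ takes a genuinely different route. The paper reduces first to $\hscal\in L^\infty$ via the decreasing truncation $\hscal_k=\max\{\hscal,-k\}$ (Step~\ref{stepReduction}), then to $\hscal-\scal$ compactly supported via the implicit function theorem (Step~\ref{stepReduction2}), and finally obtains $\phi$ by minimizing the functional $F$ of \eqref{edDefF}; the hypothesis $\cY_g(\cZ)>0$ is consumed in the coercivity of $F$ through Lemma~\ref{lmIntegration}. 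You instead approximate from below by strictly negative $\hscal_k=\hscal-\tfrac1k\rho^\delta$, import the existence theory for $\hscal<0$ to get an increasing sequence $\phi_k$, and the whole weight falls on ruling out $m_k=\sup\phi_k\to\infty$ by a concentration analysis on $\psi_k=\phi_k/m_k$, producing a nontrivial $Y\in\cF(\cZ)$ with $G_g(Y)\le0$, which contradicts $\cY_g(\cZ)>0$ via Proposition~\ref{propYamabe}. Your route dispenses with the variational machinery (the functional $F$ and Lemma~\ref{lmIntegration}), but is less self-contained: it leans on existence results for strictly negative $\hscal$ at the regularity $X^{0,p}_\delta$ (where $\hscal_k$ need not be bounded, and where the cited references should be verified to apply), which is precisely the dependence the paper's variational argument avoids.

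Two steps need repair. First, $\int_M|\hscal|\zeta^2\psi_k^N\to0$ does not by itself give $\psi_k\to0$ in $L^N(\supp d\zeta)$: $\zeta$ vanishes on part of $\supp d\zeta$ (near the outer boundary of its support), so $\zeta^2$ is not bounded below there. The fix is to repeat the test-function computation with a second cutoff $\zeta'$ that is $\equiv1$ on all of $\supp\zeta$, compactly supported, with $\supp d\zeta'$ disjoint from $\cZ$; this gives $\int_M|\hscal|(\zeta')^2\psi_k^N\to0$, and since $\zeta'\equiv1$ and $|\hscal|\ge\eta>0$ on the compact set $\supp d\zeta$, the $L^N(\supp d\zeta)$ convergence follows. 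Second, the claim that $\phi_\infty-1\in X^{2,p}_\delta(M,\bR)$ ``follows from standard barrier/Fredholm arguments'' glosses over the non-trivial step of showing $\phi_\infty\to1$ at infinity: the monotone limit only gives $\phi_1\le\phi_\infty\le C$ with a constant upper bound, and one still needs an upper barrier tending to $1$. The paper produces the needed barriers at infinity as the pair $\phibar_0,\phibar_1$ of \eqref{eqDefPhibar} in Step~\ref{stepReduction}; only after $\phi_\infty-1=o(1)$ is secured does Proposition~\ref{propImprovedDecay} bootstrap the $X^{2,p}_\delta$ decay. You should incorporate that construction explicitly rather than appeal to it in passing.
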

If $\hscal$ enjoys further decay properties, Theorem \ref{thmA} can be
improved:
\begin{corollary}\label{thmB}
Under the assumptions of the previous theorem, if
$\hscal - \scal \in X^{k, p}_\delta(M, \bR)$ (resp.
$\hscal - \scal \in C^{k, \alpha}_\delta(M, \bR)$)
for some $k \in \bN$, $k \leq l$, and $\delta \in (0, n)$
(resp. $k \in \bN$, $\alpha \in (0, 1)$, $k+\alpha \leq l+\beta$)
we have
$\phi-1 \in X^{k, \alpha}_\delta(M, \bR)$ (resp.
$\phi-1 \in C^{k, \alpha}_\delta(M, \bR)$).
\end{corollary}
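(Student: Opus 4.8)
The plan is to bootstrap the regularity of the solution $\phi$ produced by Theorem~\ref{thmA} by recasting the equation \eqref{eqPrescribedScalar} as a \emph{linear} elliptic equation for $u \definedas \phi-1$ whose zeroth-order coefficient and right-hand side carry exactly the required decay and regularity, and then feeding it into the standard weighted elliptic regularity theory on asymptotically hyperbolic manifolds --- scaled interior $L^p$ (resp.\ Schauder) estimates on the balls of a uniformly locally finite cover, as in \cite{And93,GrahamLee,LeeFredholm} and, in the local Sobolev framework, \cite{GicquaudSakovich}.

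First I record what Theorem~\ref{thmA} already gives. Since $u=\phi-1\in X^{2,p}_\delta(M,\bR)$ and $p>n/2$, the Sobolev embedding for weighted local Sobolev spaces (\cite[Proposition 2.3]{GicquaudSakovich}) yields a pointwise bound $|u|\leq C\rho^\delta$, so $\phi\to 1$ at infinity; combined with $\phi>0$ and continuity on a compact exhaustion this forces a uniform lower bound $\phi\geq\phi_0>0$ on $M$. Consequently the function
\[
\Psi(t)\definedas\frac{(1+t)^{N-1}-1}{t},\qquad \Psi(0)\definedas N-1,
\]
is smooth and positive on a neighbourhood of the (bounded) range of $u$, and because the weighted local Sobolev and H\"older spaces are, in the relevant range of indices ($p>n/2$), Banach algebras stable under composition with smooth functions (\cite[Lemma 2.4]{GicquaudSakovich}), $\Psi(u)$ inherits whatever regularity $u$ enjoys. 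Subtracting $\scal$ from \eqref{eqPrescribedScalar} and using $\Delta 1=0$ together with $\phi^{N-1}-1=u\,\Psi(u)$ turns the equation into the linear equation
\[
-\frac{4(n-1)}{n-2}\,\Delta u + c\,u = \hscal-\scal,\qquad c\definedas\scal-\hscal\,\Psi(u).
\]
Here $c\geq\scal$ since $\hscal\leq 0$ and $\Psi>0$; moreover $\hscal-\scal$ and $u$ tend to $0$ at infinity while $\scal\to-n(n-1)$, so $c\to-n(n-1)+n(n-1)(N-1)=\tfrac{4n(n-1)}{n-2}>0$. Thus $P\definedas-\tfrac{4(n-1)}{n-2}\Delta+c$ is a uniformly elliptic operator of asymptotically hyperbolic type whose indicial roots at infinity are $-1$ and $n$, so the weight $\delta\in(0,n)$ lies strictly between them.

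The bootstrap is then an induction on the regularity index. Suppose $u\in X^{j,p}_\delta(M,\bR)$ for some $j\geq 2$; then $\Psi(u)\in X^{j,p}_0$, and since $\scal\in C^{l-2,\beta}_0(M,\bR)\subset X^{l-2,p}_0(M,\bR)$ and $\hscal-\scal\in X^{k,p}_\delta(M,\bR)$, the displayed linear equation gives $\Delta u\in X^{m,p}_\delta(M,\bR)$ with $m=\min\{l-2,j,k\}$. The weighted interior elliptic estimate for $\Delta$ --- scaled on the balls $B_r(x_i)$ of \cite[Lemma~2.2]{LeeFredholm} and then taken against the weight $\rho^{-\delta}$, as in \cite{GicquaudSakovich,LeeFredholm} --- propagates this regularity without changing the weight, so $u\in X^{m+2,p}_\delta(M,\bR)$, i.e.\ the index jumps from $j$ to $\min\{l,j+2,k+2\}$. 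Starting from $j=2$ and iterating finitely many times, $u$ reaches $X^{\min\{l,k+2\},p}_\delta(M,\bR)$, and $\min\{l,k+2\}\geq k$ because $k\leq l$; hence $\phi-1=u\in X^{k,p}_\delta(M,\bR)$. (Should the weight $\delta$ in the hypothesis exceed the one used in Theorem~\ref{thmA}, the same linear equation, together with $\delta<n$ being below the upper indicial root, also improves the decay of $u$ so as to match that of $\hscal-\scal$.) The H\"older statement is proven identically, with interior Schauder estimates (\cite{And93,GrahamLee,LeeFredholm}) replacing the $L^p$ estimates, $C^{j,\alpha}_\delta$ replacing $X^{j,p}_\delta$ (the base case $\phi-1\in C^{1,\gamma}_\delta$ coming from the Sobolev embedding of $X^{2,p}_\delta$), and $k+\alpha\leq l+\beta$ replacing $k\leq l$.

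The step that genuinely requires care --- the ``hard part'' --- is making the weighted interior regularity rigorous on the non-compact manifold: one must check that the interior elliptic estimates on the balls $B_r(x_i)$ carry constants independent of $i$ (this uses the uniform injectivity radius and two-sided curvature bounds of an asymptotically hyperbolic metric, together with uniform control of the coefficients of $P$ in the corresponding local norm), and that multiplying these estimates by $\rho^{-\delta}$ and taking the supremum over $i$ is legitimate, i.e.\ that $\rho(x)\simeq\rho(x_i)$ for $x\in B_r(x_i)$ --- precisely the slow-variation property of $\rho\simeq e^{-d_g(\cdot,K)}$ already exploited in the proof of Lemma~\ref{lmCompact}. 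The remaining ingredients (the algebra and composition properties of the weighted spaces, and the elementary arithmetic closing the induction) are routine.
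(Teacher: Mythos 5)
Your proposal is correct and follows exactly the route the paper intends --- the paper's own proof of this corollary is the single sentence ``The proof of the corollary follows from an elliptic regularity argument applied to the function $\phi-1$ obtained in Theorem~\ref{thmA}. See e.g.\ \cite{GicquaudSakovich}.'' Your write-up supplies the details that the paper delegates to \cite{GicquaudSakovich}: rewriting \eqref{eqPrescribedScalar} as the linear equation $-\tfrac{4(n-1)}{n-2}\Delta u + (\scal - \hscal\,\Psi(u))u = \hscal - \scal$ for $u=\phi-1$ (which is precisely the rewriting used in Proposition~\ref{propImprovedDecay} via $\phi^{N-1}-\phi=(N-2+h(\phi))(\phi-1)$, just organized differently), checking that the zeroth-order coefficient tends to $\tfrac{4n(n-1)}{n-2}$ so the indicial roots are $-1$ and $n$, and bootstrapping with the uniformly local scaled interior estimates, keeping track of the cap $\min\{l,k+2\}\geq k$ coming from the metric's own $C^{l,\beta}$ regularity. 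The verifications you flag as the ``hard part'' (uniform constants on the balls $B_r(x_i)$ and slow variation of $\rho$) are exactly the structural facts about weighted local Sobolev spaces established in \cite[Appendix~A]{GicquaudSakovich} and already exploited in Lemma~\ref{lmCompact} of the paper. No genuinely different idea is involved, and I see no gap.
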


The proof of the corollary follows from an elliptic regularity argument
applied to the function $\phi-1$ obtained in Theorem \ref{thmA}. See e.g.
\cite{GicquaudSakovich}.

We first prove that \eqref{itExists} $\Rightarrow$ \eqref{itPositive} in
Theorem \ref{thmA} because this implication is much simpler than its converse.

\begin{proof}[Proof of \eqref{itExists} $\Rightarrow$ \eqref{itPositive}
in Theorem \ref{thmA}]
Let $\phi$ be the solution to Equation \eqref{eqPrescribedScalar} given
by \eqref{itExists} and $\ghat = \phi^{N-2} g$. Then, for any $u \in \cF(\cZ)$,
we have, from the proof of Proposition \ref{propConfInvariance},
\begin{align*}
G_g(\phi u)
 &= G_{\ghat}(u)\\
 &= \int_M \left[\frac{4(n-1)}{n-2} |du|_{\ghat}^2 + \hscal~u^2\right] d\mu^{\ghat}\\
 &= \frac{4(n-1)}{n-2} \int_M |du|_{\ghat}^2 d\mu^{\ghat}\\
 &= \frac{4(n-1)}{n-2} \int_M \phi^2 |du|_g^2 d\mu^g\\
 & \geq \frac{4(n-1)}{n-2} \left(\min_M \phi\right)^2 \int_M |du|_g^2 d\mu^g
\end{align*}
From the (global) Sobolev embedding theorem \cite[Lemma 3.6]{LeeFredholm},
we have that for some constant $s > 0$ (independent of $u$),
\begin{align*}
 G_g(\phi u)
 & \geq s \left(\min_M \phi\right)^2 \left(\int_M |u|^N d\mu^g\right)^{2/N}\\
 & \geq s \left(\frac{\min_M \phi}{\max_M \phi}\right)^2 \left(\int_M |\phi u|^N d\mu^g\right)^{2/N}.
\end{align*}
We conclude that
\[
 \cY_g(\cZ) = \inf_{u \in \cF(\cZ)} \frac{G_g(\phi u)}{\|\phi u\|_{L^N(M, \bR)}^{2/N}} \geq s \left(\frac{\min_M \phi}{\max_M \phi}\right)^2 > 0.
\]

\end{proof}

Proving the converse statement, namely \eqref{itPositive} $\Rightarrow$
\eqref{itExists}, is more complicated. It will be carried in several steps.
We first state a maximum principle for Equation \eqref{eqPrescribedScalar}
that will be of constant use in the sequel.
\begin{proposition}\label{propMaximumPrinciple}
Let $\hscal_1, \hscal_2 \in L^p_{\loc}(M, \bR)$ be two given functions
such that $\hscal_1 \leq \hscal_2 \leq 0$. Assume that there exists two
positive functions $\phi_1, \phi_2 \in W^{2, p}_{\loc}(M, \bR)$ solving
the prescribed curvature equation \eqref{eqPrescribedScalar} with
$\hscal \equiv \hscal_1$ and $\hscal \equiv \hscal_2$ respectively:
\[
\left\lbrace
\begin{aligned}
 - \frac{4(n-1)}{n-2} \Delta \phi_1 + \scal~\phi_1 &= \hscal_1~\phi_1^{N-1},\\
 - \frac{4(n-1)}{n-2} \Delta \phi_2 + \scal~\phi_2 &= \hscal_2~\phi_2^{N-1}.
\end{aligned}
\right.
\]
If further, $\phi_1, \phi_2 \to 1$ at infinity, then $\phi_1 \leq \phi_2$.
\end{proposition}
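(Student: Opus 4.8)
The plan is to compare $\phi_1$ and $\phi_2$ not through their difference but through their ratio $v \definedas \phi_1/\phi_2$. This is the natural object here: forming the ratio removes the sign-indefinite potential $\scal$ from the equation and turns the nonlinearity into a monotone term. Since $\phi_1, \phi_2 \in W^{2,p}_{\loc}(M, \bR)$ are positive they are continuous (Sobolev embedding, $p > n/2$), so $v$ is a positive continuous function; routine bookkeeping shows $v \in W^{2,p}_{\loc}(M, \bR)$, and $v \to 1$ at infinity because $\phi_1, \phi_2 \to 1$ there. It therefore suffices to prove $v \leq 1$ on $M$.

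First I would compute the equation for $v$ on the open set $\Omega \definedas \{x \in M : v(x) > 1\} = \{\phi_1 > \phi_2\}$. Writing $\phi_1 = v\phi_2$, expanding $\Delta(v\phi_2)$ in the equation for $\phi_1$ and subtracting $v$ times the equation for $\phi_2$, the terms $v\Delta\phi_2$ and $\scal\,v\phi_2$ cancel and there remains
\[
 -\frac{4(n-1)}{n-2}\left(\phi_2\,\Delta v + 2\,\langle\nabla\phi_2,\nabla v\rangle\right) = \phi_2^{N-1}\,v\left(\hscal_1\,v^{N-2} - \hscal_2\right),
\]
equivalently, in divergence form, $\divg\!\left(\phi_2^2\,\nabla v\right) = -\frac{n-2}{4(n-1)}\,\phi_2^{N}\,v\left(\hscal_1\,v^{N-2} - \hscal_2\right)$. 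On $\Omega$ we have $v > 1$, hence $v^{N-2} > 1$ since $N-2 = \frac{4}{n-2} > 0$; as $\hscal_1 \leq 0$ this gives $\hscal_1\,v^{N-2} \leq \hscal_1 \leq \hscal_2$, so the right-hand side is non-negative. Thus $v$ is a weak subsolution: $\divg\!\left(\phi_2^2\,\nabla v\right) \geq 0$ on $\Omega$, a linear divergence-form inequality with no zeroth-order term.

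Next I would run an elementary weak maximum principle, arguing by contradiction that $\Omega \neq \emptyset$. Since $\phi_1, \phi_2 \to 1$ at infinity, $v - 1 \to 0$ at infinity, so for every $c > 0$ the set $\{v \geq 1+c\}$ is closed and bounded, hence compact because $g$ is complete (Hopf--Rinow). Consequently $\psi_c \definedas (v-1-c)_+$ is a non-negative, compactly supported element of $W^{1,2}_0(M, \bR)$ with support contained in $\Omega$, and $\nabla\psi_c$ agrees with $\nabla v$ on $\{v > 1+c\}$ and vanishes elsewhere. Using $\psi_c$ as a test function in $\divg(\phi_2^2\,\nabla v) \geq 0$ and integrating by parts gives
\[
 \int_{\{v > 1+c\}} \phi_2^2\,|\nabla v|^2\,d\mu^g = \int_M \phi_2^2\,\langle\nabla v,\nabla\psi_c\rangle\,d\mu^g \leq 0,
\]
so $\nabla v = 0$ almost everywhere on $\{v > 1+c\}$, and, letting $c \downarrow 0$, on $\Omega$. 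A continuous function with vanishing gradient is constant on each connected component of $\Omega$; but $v$ equals a constant $> 1$ on such a component while, by continuity, $v = 1$ on its topological boundary, which is impossible unless the component is all of $M$ --- contradicting $v \to 1$ at infinity. Hence $\Omega = \emptyset$, i.e. $\phi_1 \leq \phi_2$.

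The only genuinely non-routine step is the passage to the ratio $v = \phi_1/\phi_2$; everything after it is the most basic maximum-principle argument. The remaining difficulty is the usual non-compactness bookkeeping --- verifying $v \in W^{2,p}_{\loc}$, that the divergence identity holds in the weak sense, and that $\psi_c$ is an admissible compactly supported test function --- and it is in this last point that the hypothesis $\phi_1, \phi_2 \to 1$ at infinity enters, through completeness of $g$ to turn ``closed and bounded'' into ``compact''. (Alternatively, from $\divg(\phi_2^2\,\nabla v) \geq 0$ on $\Omega$ one could invoke the strong maximum principle directly, at the cost of slightly more regularity on $v$.)
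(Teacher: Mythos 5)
Your proof is correct, and it is a genuinely different route from the paper's. The paper sets $\psi_i = \log\phi_i$ and subtracts, obtaining a \emph{non-divergence-form} linear equation for $\xi = \psi_1 - \psi_2 = \log(\phi_1/\phi_2)$ with a zeroth-order coefficient of the right sign; it then cites Trudinger's maximum principle for operators with measurable coefficients (\cite{TrudingerMeasurable}) on an exhausting sequence of domains and uses $\psi_1, \psi_2 \to 0$ at infinity to push the boundary sup to zero. You work with the ratio $v = \phi_1/\phi_2$ directly (which is of course just $e^\xi$, so the underlying change of variable is the same), put the equation in \emph{divergence form} $\divg(\phi_2^2\nabla v)\geq 0$ on $\{v>1\}$, and then run a self-contained Stampacchia-type truncation with $(v-1-c)_+$, where the decay $v\to 1$ supplies the compact support of the test function. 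What each buys: the paper's version is shorter on the page but outsources the hard part to a cited theorem (and needs the potential term $-(N-2)f\hscal_2 \geq 0$, which is where $\hscal_2\leq 0$ enters); yours is longer but entirely elementary, needs no cited maximum principle, and the divergence structure makes the sign argument transparent. Two small points to keep in mind when polishing: (i) the claim $v\in W^{2,p}_{\loc}$ uses that $W^{2,p}_{\loc}$ is a local algebra containing reciprocals of its positive continuous elements, which requires $p>n/2$ --- worth stating; (ii) in the final paragraph you implicitly assume $M$ connected (otherwise run the argument on each component). Neither is a gap, just bookkeeping to make explicit.
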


\begin{proof}
Since $\phi_1$ and $\phi_2$ are positive functions, we can set
$\psi_1 = \log(\phi_1)$ and $\psi_2 = \log(\phi_2)$. Both $\phi_1$
and $\phi_2$ are positive and continuous. As a consequence, we have
$\psi_1, \psi_2 \in W^{2, p}_{\loc}$. It is straightforward to check that
they satisfy
\[
\left\lbrace
\begin{aligned}
 - \frac{4(n-1)}{n-2}\left( \Delta \psi_1 + |d\psi_1|^2\right) + \scal &= \hscal_1~e^{(N-2) \psi_1},\\
 - \frac{4(n-1)}{n-2}\left( \Delta \psi_2 + |d\psi_2|^2\right) + \scal &= \hscal_2~e^{(N-2) \psi_2}.
\end{aligned}
\right.
\]
Subtracting these equations, we obtain
\begin{align*}
 & - \frac{4(n-1)}{n-2}\left[ \Delta (\psi_1 - \psi_2) + \<d(\psi_1+\psi_2), d(\psi_1-\psi_2)\right]\\
 &\qquad\qquad = \hscal_2 \left(e^{(N-2) \psi_1} - e^{(N-2) \psi_2}\right) + \left(\hscal_1 - \hscal_2\right) e^{(N-2)\psi_1}\\
 &\qquad\qquad = (N-2) \hscal_2 \left(\int_0^1 e^{(N-2)(\lambda \psi_1 + (1-\lambda)\psi_2)} d\lambda\right) (\psi_1 - \psi_2) + \left(\hscal_1 - \hscal_2\right) e^{(N-2)\psi_1}.
\end{align*}
Setting $\xi = \psi_1 - \psi_2$ and
$\displaystyle f = \int_0^1 e^{(N-2)(\lambda \psi_1 + (1-\lambda)\psi_2)} d\lambda$,
we get the following equation for $\xi$:
\[
- \frac{4(n-1)}{n-2}\left[ \Delta \xi + \<d(\psi_1+\psi_2), d\xi\>\right] - (N-2) f \hscal_2 \xi
 = \left(\hscal_1 - \hscal_2\right) e^{(N-2)\psi_1}.
\]
Since $f\scal_2 \leq 0$, we can apply the maximum principle
\cite[Theorem 3.1]{TrudingerMeasurable}
on larger and larger domains $\Omega_k$ such that
$\bigcup_k \Omega_k = M$ and get that
\[
 \sup_M \xi \leq \liminf_{k \to \infty}~\sup_{\Omega_k} \xi = 0
\]
because $\psi_1, \psi_2 \to 0$ at infinity.
\end{proof}
As a consequence, we immediately get that (under the assumptions of
Theorem \ref{thmA}) if the solution $\phi$ to \eqref{eqPrescribedScalar}
exists, it is unique.

We now turn our attention to a decay estimate of the solution $\phi$
to the equation \eqref{eqPrescribedScalar}. Following \cite{GicquaudSakovich},
for any $k \in \bN$ and $p \in [1, \infty)$, we define the subspace

\[
 X^{k, p}_{0^+}(M, \bR) \definedas \{u \in X^{k, p}_0(M, \bR),~ \left\|u\right\|_{W^{k, p}(B_r(x))} = o(1)\}
\]
of $X^{k,p}_0(M, \bR)$,
where $o(1)$ refers to a quantity that goes to zero as $x$ goes to
infinity. As indicated in \cite{GicquaudSakovich}, this space is the closure
of the space $C^{l, \beta}_c(M, \bR)$ of compactly supported $C^{l, \beta}$-functions
in the space $X^{k, p}_0(M, \bR)$. Assuming that $kp > n$, it is easily seen
that $X^{k, p}_{0^+}(M, \bR)$ is a (non unital) Banach subalgebra of
$X^{k, p}_0(M, \bR)$.

Note also that the $X^{k, p}_0$-norm is weaker than the $W^{k, p}_0$-norm.
Hence, $W^{k, p}_0(M, \bR) \subset X^{k, p}_{0^+}(M, \bR)$.

\begin{proposition}\label{propImprovedDecay}
Assume that the assumptions of Theorem \ref{thmA} are fulfilled and that
$\phi$ is a positive solution to Equation \eqref{eqPrescribedScalar} such
that $\phi-1 \in X^{2, p}_{0^+}(M, \bR)$. Then
$\phi-1 \in X^{2, p}_{\delta}(M, \bR)$.
\end{proposition}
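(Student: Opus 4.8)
The goal is a bootstrap: starting from the weak decay $\phi - 1 \in X^{2,p}_{0^+}(M,\bR)$, recover the polynomial decay $\phi - 1 \in X^{2,p}_\delta(M,\bR)$. The plan is to write $w \definedas \phi - 1$ and linearize Equation \eqref{eqPrescribedScalar} around the constant $1$. Subtracting the equation satisfied by the constant function $1$ (for which $-\frac{4(n-1)}{n-2}\Delta 1 + \scal = \scal$) from the equation for $\phi$, one obtains an equation of the schematic form
\[
 - \frac{4(n-1)}{n-2} \Delta w + \scal~w - \hscal\bigl((1+w)^{N-1} - 1\bigr) + \scal - \hscal = 0,
\]
i.e. $\DeltaL w = F$ where $\bL_g$ is the linearization $-\frac{4(n-1)}{n-2}\Delta + \scal - (N-1)\hscal$ at $\phi=1$ (or a suitable variant), and the right-hand side $F$ collects $\hscal - \scal$ together with the quadratic-and-higher remainder terms in $w$. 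The key structural fact is that $\hscal - \scal \in X^{0,p}_\delta(M,\bR)$ by hypothesis, and $\hscal$ itself is bounded (since $\hscal \leq 0$ and $\hscal - \scal$ decays while $\scal$ is bounded).

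First I would set up the mapping properties of the linearized operator. By the asymptotic Poincaré inequality (Lemma \ref{lmPoincare}), the operator $-\frac{4(n-1)}{n-2}\Delta + \scal$ is, near infinity, bounded below by $\frac{n-1}{n-2} + \epsilon$, hence is invertible on the relevant weighted spaces for weights in a range around $0$; adding the nonpositive zeroth-order term $-(N-1)\hscal \geq 0$ only helps. One then invokes the standard isomorphism theory for such operators on $X^{k,p}_\delta$ spaces (as in \cite{LeeFredholm, GicquaudSakovich}): for $\delta$ in the indicial range $(0,n)$ stated in the theorem, $\DeltaL : X^{2,p}_\delta(M,\bR) \to X^{0,p}_\delta(M,\bR)$ is an isomorphism, and more to the point, if $u \in X^{2,p}_{0^+}(M,\bR)$ solves $\DeltaL u = F$ with $F \in X^{0,p}_\delta(M,\bR)$, then $u \in X^{2,p}_\delta(M,\bR)$. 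This is the ``weak solution which already decays, driven by a source with better decay, inherits that decay'' principle.

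The next step is to verify that $F$, the right-hand side, actually lies in $X^{0,p}_\delta(M,\bR)$. It splits as $F = (\hscal - \scal) + \hscal \cdot R(w)$, where $R(w) = (1+w)^{N-1} - 1 - (N-1)w$ is the Taylor remainder, which vanishes to second order in $w$. The first term is in $X^{0,p}_\delta$ by hypothesis. For the second term, since $w \in X^{2,p}_{0^+} \subset X^{2,p}_0$ and $X^{2,p}_0$ embeds in $L^\infty$ (as $2p > n$), the remainder $R(w)$ is controlled by $w^2$ pointwise on the region where $w$ is small — which, by $w \in X^{2,p}_{0^+}$, is a neighborhood of infinity. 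Using that $X^{2,p}_{0^+}$ is a Banach algebra (stated in the excerpt when $kp>n$) and that $\hscal$ is bounded, one gets $\hscal\, R(w) \in X^{0,p}_{0^+}(M,\bR)$, which is the obstacle: this only gives $o(1)$ decay, not $\rho^\delta$ decay. So a genuine bootstrap loop is needed rather than a one-shot estimate.

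The main obstacle, then, is running the bootstrap: one cannot conclude $w \in X^{2,p}_\delta$ in a single step because the nonlinear term $\hscal R(w)$ is only as good as $w^2$, and a priori $w$ is merely $o(1)$. The fix is an iteration on the decay exponent. Introduce $\sigma \definedas \sup\{s \in [0,\delta] : w \in X^{2,p}_s(M,\bR)\}$; one knows $\sigma \geq 0$. Suppose $\sigma < \delta$. For any $s < \sigma$ one has $w \in X^{2,p}_s$, hence $R(w) \sim w^2 \in X^{0,p}_{2s}$ (again using the algebra property and the $L^\infty$ bound near infinity), so $\hscal R(w) \in X^{0,p}_{2s}$, and therefore $F \in X^{0,p}_{\min\{\delta, 2s\}}$. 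The isomorphism theorem then upgrades $w$ to $X^{2,p}_{\min\{\delta,2s\}}$. Letting $s \uparrow \sigma$ gives $w \in X^{2,p}_{\min\{\delta, 2\sigma\}}$; since $\sigma < \delta$ would force $\min\{\delta,2\sigma\} > \sigma$ (as $2\sigma > \sigma$ and $\delta > \sigma$), this contradicts the definition of $\sigma$ unless one must be slightly careful at the endpoint $s=\sigma$ itself, handled by taking $s$ close enough to $\sigma$ that $2s > \sigma$. Hence $\sigma = \delta$, and a final application of the isomorphism theorem with $F \in X^{0,p}_\delta$ gives $w = \phi - 1 \in X^{2,p}_\delta(M,\bR)$, as claimed. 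Throughout, the constraint $\delta \in (0,n)$ is exactly what keeps $\delta$ (and every intermediate exponent $\min\{\delta, 2s\}$) inside the indicial interval of $\DeltaL$ where the isomorphism holds.
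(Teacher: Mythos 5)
Your decomposition puts the Taylor remainder $\hscal R(w)$, where $R(w) = (1+w)^{N-1} - 1 - (N-1)w$, on the source side. The resulting bootstrap gains $s \mapsto \min\{\delta, 2s\}$ per step, which has a fixed point at $s = 0$: from the hypothesis $\phi - 1 \in X^{2,p}_{0^+}(M, \bR)$ alone you only know $\sigma \geq 0$, and your step ``$2\sigma > \sigma$'' tacitly requires $\sigma > 0$, which you have not established. Running your own argument to the end gives only $\min\{\delta, 2\sigma\} \leq \sigma$, i.e.\ $\sigma \geq \delta$ \emph{or} $\sigma = 0$; the alternative $\sigma = 0$ cannot be ruled out from the hypothesis, since $o(1)$ decay does not imply $\rho^s$ decay for any $s > 0$. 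So the iteration never starts.

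The paper's proof sidesteps this by a different split. Equation~\eqref{eq3} keeps $\scal(\phi - \phi^{N-1})$ on the left and factors it, via $\phi^{N-1} - \phi = (N-2 + h(\phi))(\phi - 1)$ with $h(1) = 0$, as a zeroth-order coefficient acting on $\phi - 1$. The resulting linear operator $\Phi$ is a compact perturbation of $\Phi_0 = \frac{4(n-1)}{n-2}(-\Delta + n)$ with perturbation coefficients in $X^{0,p}_{0^+}(M, \bR)$, while the source $(\hscal - \scal)\phi^{N-1}$ is already in $X^{0,p}_\delta(M, \bR)$. The parametrix identity \eqref{eqParametrix1} then gains a \emph{fixed} amount per application (the increment $\delta'' - \delta'$ is strictly positive and independent of whether $\delta' = 0$), so the iteration initializes from $\delta' = 0$ and terminates in finitely many steps. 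Your decomposition can be repaired by absorbing all of $\hscal\left[(1+w)^{N-1}-1\right]$ into the operator, writing it as $\hscal(N-1+g(\phi))w$ with $g(1)=0$, which makes your setup structurally parallel to the paper's; with the quadratic part on the source side, as written, the argument fails. A smaller inaccuracy: $\hscal - \scal \in X^{0,p}_\delta(M, \bR)$ is an $L^p$-on-balls bound and does not imply $\hscal$ is bounded, so ``$\hscal$ is bounded'' is unjustified; the product estimate you need still holds because $R(w)$ lands in $L^\infty$ by Sobolev embedding, but the justification you gave is not correct.
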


\begin{proof}
We rewrite \eqref{eqPrescribedScalar} as follows:
\begin{equation}\label{eq3}
 -\frac{4(n-1)}{n-2} \Delta \phi + \scal \left(\phi - \phi^{N-1}\right) = \left(\hscal - \scal\right) \phi^{N-1}.
\end{equation}
Since, by assumption, $\hscal - \scal \in X^{0, p}_\delta(M, \bR)$, the
right-hand side of \eqref{eq3} belongs to $X^{0, p}_\delta(M, \bR)$.
We can write $\phi^{N-1}-\phi = (N-2 + h(\phi)) (\phi-1)$ where $h$ is
an analytic function of $\phi$ such that $h(1) = 0$. Equation \eqref{eq3}
then becomes
\begin{equation}\label{eq4}
 -\frac{4(n-1)}{n-2} \Delta (\phi-1) + \scal(N-2 + h(\phi)) (\phi-1) = \left(\hscal - \scal\right) \phi^{N-1}.
\end{equation}
It follows from Lemma \ref{lmCompact} (where the assumption $f \in X^{0, p}_{\delta'}(M, \bR)$
can be replaced by $f \in X^{0, p}_{0^+}(M, \bR)$ without modification of
the proof) that the operator
\[
 \Phi: u \mapsto -\frac{4(n-1)}{n-2} \Delta u + \scal(N-2 + h(\phi)) u
\]
from $X^{2, p}_\delta(M, \bR)$ to $X^{0, p}_\delta(M, \bR)$ is Fredholm
for $\delta \in (-1, n)$ being a compact perturbation of
\[
 \Phi_0: u \mapsto \frac{4(n-1)}{n-2} \left[-\Delta u + n u\right]
\]
(see e.g. \cite[Section 3]{LeeSpectrum} and \cite[Chapter 7]{LeeFredholm}
for the study of the operator $u \mapsto -\Delta u + nu$).

From \cite[Corollary A.6]{GicquaudSakovich}, for any $\delta' \in (-1, n)$,
there exist continuous operators $\Qtil$ and $\Ttil$,
\begin{align*}
\Qtil &: X_{\delta'}^{0,p}(M,\bR)\rightarrow X_{\delta'}^{2,p}(M,\bR),\\
\Ttil &: X_{\delta'}^{1,p}(M,\bR)\rightarrow X_{\delta''}^{2,p}(M,\bR),
\end{align*}
where $\delta''$ is such that $\delta'\leq\delta''\leq\delta'+1$,
$\delta'' \in (-1, n)$, satisfying  
\[
\Qtil \Phi_0 v= v + \Ttil v 
\]
for any $v \in X^{2, p}_{\delta'}(M, \bR)$ supported in $M_{\rho_0}$ with
$\rho_0$ small enough. Thus,
\begin{equation}\label{eqParametrix0}
\Qtil \Phi v= v + \Qtil (\Phi - \Phi_0) v + \Ttil v.
\end{equation}
Now remark that $\Phi - \Phi_0$ is a multiplication operator by some
function belonging to $X^{0, p}_{0^+}(M, \bR)$. As a consequence, upon
diminishing $\rho_0$ we can assume that
\[
 \left\|\Qtil (\Phi - \Phi_0) v\right\|_{X^{2, p}_{\delta'}(M, \bR)} \leq \frac{1}{2} \left\|v\right\|_{X^{2, p}_{\delta'}(M, \bR)}
\]
for any $v \in X^{2, p}_{\delta'}(M, \bR)$ supported in $M_{\rho_0}$.
Note that the set of such functions form a Banach subspace
$\Xbar^{2, p}_{\delta'}(M_{\rho_0}, \bR)$ of
$X^{2, p}_{\delta'}(M, \bR)$. From our assumption, the operator
\[
 R: v \mapsto v + \Qtil (\Phi - \Phi_0) v
\]
is invertible on $\Xbar^{2, p}_{\delta'}(M_{\rho_0}, \bR)$. Hence,
setting $Q = R^{-1} \Qtil$ and $T = R^{-1} \Ttil$, we have
\footnote{Remark that we are slightly sloppy here since the image of
$\Qtil$ is $X_{\delta'}^{2,p}(M,\bR)$ while $R^{-1}$ has range
$\Xbar_{\delta'}^{0,p}(M_{\rho_0},\bR)$. This can be circumvented by
setting $Q = R^{-1} C \Qtil$ with $C$ a multiplication operator by some
well chosen cutoff function. Details are left to the reader.}
\begin{equation}\label{eqParametrix1}
Q \Phi v= v + T v
\end{equation}
for any $v \in X_0$ and
\begin{align*}
Q &: X_{\delta'}^{0,p}(M_{\rho_0},\bR)\rightarrow \Xbar_{\delta'}^{2,p}(M,\bR),\\
T &: X_{\delta'}^{1,p}(M_{\rho_0},\bR)\rightarrow \Xbar_{\delta''}^{2,p}(M,\bR),
\end{align*}
where $\Xbar^{k, p}_{\delta'}(M_{\rho_0}, \bR)$ is the subspace of
$X^{k, p}_{\delta'}(M, \bR)$ vanishing outside $M_{\rho_0}$.
We now set $v = \chibar_{\rho_0} (\phi-1)$. Since $v$ agrees with
$\phi-1$ on $M_{\rho_0}$, we have to show that $v \in X^{2, p}_\delta(M, \bR)$.
From \eqref{eq4}, we have
$\Phi v = \left(\hscal - \scal\right) \phi^{N-1}$ on $M_{\rho_0}$ so
$\Phi v \in X^{2, p}_\delta(M, \bR)$. The strategy is to apply inductively
\eqref{eqParametrix1}. Indeed, if we know that $v \in X^{2, p}_{\delta'}(M, \bR)$
for some $\delta' < \delta$ ($\delta' \in (-1, n)$), we have
\[
 v = Q \Phi v - T v
\]
so $v \in X^{2, p}_{\delta^{(3)}}(M, \bR)$, with
$\delta^{(3)} = \min\{\delta, \delta''\}$ where $\delta'' \in (-1, n)$ is
such that $\delta' < \delta'' \leq \delta'+1$. After a finite number of
steps, we obtain that $v \in X^{2, p}_\delta(M, \bR)$.
\end{proof}

The first two steps reduce the proof of \eqref{itPositive} $\Rightarrow$
\eqref{itExists} to a nice particular case, namely $\hscal \in L^\infty(M, \bR)$
and $\hscal - \scal$ compactly supported. Existence of a solution $\phi$
to \eqref{eqPrescribedScalar} will then be obtained by a variational
argument.

\begin{step}[Reduction to $\hscal \in L^\infty$]\label{stepReduction}
It suffices to prove that \eqref{itPositive} $\Rightarrow$ \eqref{itExists}
in Theorem \ref{thmA} assuming further that $\hscal \in L^\infty(M, \bR)$.
\end{step}

\begin{proof}
Assume given $\hscal$ satisfying the assumptions of the theorem.
We set $\hscal_k = \max\{\hscal, -k\}$ for all $k \geq n(n-1)$ so that
each $\hscal_k$ belongs to $L^\infty(M, \bR)$. Note that the zero set of
any $\hscal_k$ is the same as that of $\hscal$, namely $\cZ$. Taking for
granted that the theorem is valid for any prescribed
$\hscal \in L^\infty(M, \bR)$, we get a sequence of solutions $\phi_k$,
$\phi_k-1 \in X^{2, p}_\delta(M, \bR)$ to \eqref{eqPrescribedScalar}
with $\hscal$ replaced by $\hscal_k$. Since we have, for all $k$, that
$\hscal_{k+1} \leq \hscal_k$, we get from the maximum principle
(Proposition \ref{propMaximumPrinciple}) that $\phi_{k+1} \leq \phi_k$.
The functions $\phi_k$ solve
\begin{equation}\label{eqPSCk}
-\frac{4(n-1)}{n-2} \Delta \phi_k + \scal~\phi_k = \hscal_k~\phi_k^{N-1}
\end{equation}
and the right-hand side is bounded in $L^p_{\loc}(M, \bR)$:
\[
 0 \geq \hscal_k~\phi_k^{N-1} \geq \hscal~\phi_{n(n-1)}^{N-1}.
\]
So we get
that the sequence $(\phi_k)_k$ is bounded in $W^{2, p}_{\loc}(M, \bR)$.
Since the functions $\phi_k$ are also positive, it follows
that $(\phi_k)_k$ converges
uniformly on any compact subset $K \ssubset M$ to some continuous
function $\phi_\infty$, $0 \leq \phi_\infty \leq \phi_{n(n-1)}$. Hence,
$\hscal_k \phi_k^{N-1} \to \hscal \phi_\infty$ in $L^p_{\loc}(M, \bR)$.
This allows to pass to the limit in Equation \eqref{eqPSCk}: $\phi_\infty$
satisfies, at least in a weak sense
\[
 -\frac{4(n-1)}{n-2} \Delta \phi_\infty + \scal~\phi_\infty = \hscal~\phi_\infty^{N-1}.
\]

The difficulty consists in proving that $\phi_\infty \not\equiv 0$.
Let $\epsilon > 0$ be a small enough regular value of the defining
function $\rho$, so that the subset
$\Sigma_\epsilon = \rho^{-1}(\epsilon)$ is a smooth hypersurface of $M$.
Assume for the moment that there exists two positive functions $\phibar_0$
and $\phibar_1$ on $M_\epsilon = \rho^{-1}(0, \epsilon)$ such that
\begin{equation}\label{eqDefPhibar}
\begin{aligned}
 &-\frac{4(n-1)}{n-2} \Delta \phibar_0 + \left(\scal-\hscal\right)~\phibar_0 = 0,\quad \phibar_0 = 0~\quad\text{on}~\Sigma_\epsilon,\quad \phibar_0 \to 1~\text{at infinity},\\
 &-\frac{4(n-1)}{n-2} \Delta \phibar_1 + \left(\scal-\hscal\right)~\phibar_1 = 0,\quad \phibar_1 = 1~\quad\text{on}~\Sigma_\epsilon,\quad \phibar_1 \to 0~\text{at infinity}.
\end{aligned}
\end{equation}
It follows by a modification of the proof of Proposition \ref{propImprovedDecay}
that $\phibar_0-1, \phibar_1 \in X^{2, p}_\delta(M_\epsilon, \bR)$ as long as
$\delta \in (0, n-1)$.
Let $\lambda \in (0, 1)$ be arbitrary. We claim that if $\Lambda > 1$ is
large enough, $\phi_- \definedas \lambda \phibar_0 - \Lambda \phibar_1$
is a subsolution to \eqref{eqPrescribedScalar} (and hence to
\eqref{eqPSCk}) wherever it is positive. This follows at once by noticing
that $\phi_-$ satisfies 
\[
 -\frac{4(n-1)}{n-2} \Delta \phi_- + \scal~\phi_- = \hscal~\phi_-
\]
and that the right-hand side is less than or equal to $\hscal~\phi_-^{N-1}$
as soon as $\phi_- \leq 1$. But the set of points where $\lambda \phibar_0$
is greater than $1$ is compact. Since $\phibar_1$ is positive, by choosing
$\Lambda$ large enough, we can ensure that $\phi_- \leq 1$. Applying the
maximum principle (Proposition \ref{propMaximumPrinciple}) to \eqref{eqPSCk}
over the region $M_\epsilon \cap \{\phi_- \geq 0\}$, we conclude that
$\phi_k \geq \phi_-$ for all $k$. Hence, passing to the limit
$\phi_\infty \geq \phi_-$.

We have proven that $\phi_\infty \not\equiv 0$ but we get even more:
\[
 \liminf_{x \to \infty} \phi_\infty(x) \geq \liminf_{x \to \infty} \phi_- = \lambda.
\]
Since this holds for any choice of $\lambda \in (0, 1)$, we have
$\liminf_{x \to \infty} \phi_\infty(x) \geq 1$. On the other hand, we
have $\limsup_{x \to \infty} \phi_\infty(x) \leq \limsup_{x \to \infty} \phi_{n(n-1)}(x) = 1$.
We have proven that $\phi_\infty$ solves \eqref{eqPrescribedScalar} and
has $\lim_{x \to \infty} \phi_\infty(x) = 1$. From the strong maximum
principle, we also conclude that $\phi_\infty > 0$. Hence, we have
proven that Equation \eqref{eqPrescribedScalar} has a solution $\phi_\infty$
such that $\phi_\infty-1 = o(1)$. From Proposition \ref{propImprovedDecay},
we conclude that $\phi_\infty-1 \in X^{2, p}_\delta(M, \bR)$.\\

We still have to prove the existence of the functions $\phibar_0$ and
$\phibar_1$. For small enough $\rho_0 > 0$, we claim that the quadratic
form
\[
 H: u \mapsto \int_{M_{\rho_0}} \left[\frac{4(n-1)}{n-2} |du|^2 + (\scal-\hscal) u^2 \right] d\mu^g
\]
is well defined, continuous and coercive on
\[
 \Wtil^{1, 2}_0(M_{\rho_0}, \bR) \definedas \{u \in W^{1, 2}_0(M_{\rho_0}, \bR),~\tr\vert_{\Sigma_{\rho_0}}(u) = 0\}
\]
Definiteness and
continuity of $H$ follow immediately from Lemma \ref{lmMapping}
applied to $f = \scal - \hscal \in X^{0, p}_\delta(M, \bR)$.
To prove coercivity, we use Formula \eqref{eqPoincare} together with
Lemma \ref{lmMapping}. Choosing $\rho_0$ so that the function $\epsilon_0$
appearing in Formula \eqref{eqPoincare} is greater than or equal to
$-\frac{(n-1)^2}{8}$ on $M_{\rho_0}$, we have, for all
$u \in \Wtil^{1, 2}_0(M_{\rho_0}, \bR)$ and arbitrary $\mu > 0$,
\begin{align*}
H(u)
 &= \int_{M_{\rho_0}} \left[\frac{4(n-1)}{n-2} |du|^2 + (\scal-\hscal) u^2 \right] d\mu^g\\
 &\geq \frac{2(n-1)}{n-2}\int_{M_{\rho_0}} \left[|du|^2 + \left(\frac{(n-1)^2}{4} +\epsilon_0\right) u^2\right] d\mu^g - \int_{M_{\rho_0}} \left|\scal-\hscal\right| u^2 d\mu^g\\
 &\geq \frac{2(n-1)}{n-2}\int_{M_{\rho_0}} \left[|du|^2 + \frac{(n-1)^2}{8} u^2\right] d\mu^g\\
 &\qquad\qquad - \rho_0^\delta \|\scal-\hscal\|_{X^{0,p}_\delta(M, \bR)} \left(\mu^2 \|u\|^2_{W^{1, 2}_0(M_{\rho_0}, \bR)} + C_\mu \|u\|^2_{L^2(M_{\rho_0}, \bR)}\right)\\
 &\gtrsim \|u\|^2_{W^{1, 2}_0(M_{\rho_0}, \bR)}
\end{align*}
where the last inequality holds upon possibly reducing the value of
$\rho_0$. Let $\chibar_{\rho_0/2}$ be the cutoff function defined in
the notation. We solve the equation for $\phibar_0$ by setting
$\phibar_0 = \chibar_{\rho_0/2} + u$, where
$u \in X^{2, p}_{\delta}(M_{\rho_0}, \bR)$ solves
\[
 -\frac{4(n-1)}{n-2} \Delta u + (\scal-\hscal) u = \frac{4(n-1)}{n-2} \Delta \chibar_{\rho_0/2} - (\scal-\hscal) \chibar_{\rho_0/2}.
\]
Note that the right hand side belongs to $X^{0, p}_\delta(M_{\rho_0}, \bR)$
and that this equation is a compact perturbation of the Poisson equation
(see Lemma \ref{lmCompact}). By an extension of the results in
\cite[Appendix A]{GicquaudSakovich} to the case of asymptotically hyperbolic
manifolds with an inner boundary we get that this
equation admits a (unique) solution $u \in X^{2, p}_{\delta}(M_{\rho_0}, \bR)$
provided that the only (weak) solution to
\begin{equation}\label{eq2}
 -\frac{4(n-1)}{n-2} \Delta v + (\scal-\hscal) v = 0
\end{equation}
in $\Wtil^{1, 2}_0(M_{\rho_0}, \bR)$ is $v \equiv 0$. Proving this
extension being lengthy and not much difficult, we leave it as an
exercise to the interested reader. See e.g. \cite[Section 6]{Gicquaud}
for similar results. Assuming that
$v \in \Wtil^{1, 2}_0(M_{\rho_0}, \bR)$ is a weak solution to \eqref{eq2},
we have
\[
 0 = \int_{M_{\rho_0}} \left(-\frac{4(n-1)}{n-2} \Delta v + (\scal-\hscal) v\right) v d\mu^g = H(v).
\]
Since $H$ is coercive, this forces $v \equiv 0$.
\end{proof}

As a consequence,
\[
\boxed{\text{From now on, we will assume that }\hscal \in L^\infty(M, \bR).}
\]

\begin{step}[Reduction to $\hscal-\scal$ compactly supported]\label{stepReduction2}
There exists a function $\phi_0 > 0$,
$\phi_0 - 1 \in X^{2, p}_\delta(M, \bR)$, such that the scalar curvature
$\scaltil$ of the metric $\gtil = \phi_0^{N-2} g$ agrees with $\hscal$
outside some compact set.
\end{step}

\begin{proof}
The argument is based on the implicit function theorem. We choose an
arbitrary $\delta' \in (0, \delta)$ and set
\[
 \scaltil_{\rho_1} = \chibar_{\rho_1} \hscal + (1-\chibar_{\rho_1}) \scal = \scal + \chibar_{\rho_1} \left(\hscal - \scal\right)
\]
for $\rho_1 > 0$ small enough. Then we have
\[
 \left\|\scaltil_{\rho_1} - \scal\right\|_{X^{0, p}_{\delta'}(M, \bR)} = O(\rho_1^{\delta-\delta'}).
\]
As a consequence, $\scaltil_{\rho_1}$ converges to $\scal$ when $\rho_1$
goes to zero, so we set $\scaltil_0 \equiv \scal$. We claim that the
mapping
\begin{equation}\label{eqMappingP}
 P: (u, \rho_1) \mapsto -\frac{4(n-1)}{n-2} \Delta u + \scal~(1+u) - \scaltil_{\rho_1} (1+u)^{N-1}
\end{equation}
is well defined and continuous as a mapping from a neighborhood of the
origin in $X^{2, p}_{\delta'}(M, \bR) \times [0, \infty)$ to
$X^{0, p}_{\delta'}(M, \bR)$ and is differentiable with respect to $u$.
To this end, we rewrite $P(u, \rho_1)$ as follows:
\[
 P(u, \rho_1) = -\frac{4(n-1)}{n-2}\Delta u + \scal~u - \scaltil_{\rho_1} \left((1+u)^{N-1}-1\right) + \scal-\scaltil_{\rho_1}.
\]
Since $u \in X^{2, p}_{\delta'}(M, \bR)$, $\scal \in L^\infty(M, \bR)$ and
$\scal-\scaltil_{\rho_1} \in X^{0, p}_{\delta'}(M, \bR)$, all terms but
the third one in the definition of $P$ are clearly well defined. Note also
that, due to our choice for $p$, we have
$X^{2, p}_{\delta'} \hookrightarrow X^{0, \infty}_{\delta'}(M, \bR)$.
So, due to the multiplication properties of $X$-spaces (see
\cite{GicquaudSakovich}), we have
$\scaltil_{\rho_1} \left((1+u)^{N-1}-1\right) = \scaltil_{\rho_1} u \theta(u) \in X^{0, p}_{\delta'}(M, \bR)$,
where $\theta$ defined as $\theta(u) = ((1+u)^{N-1}-1)/u$ is analytic
over $(-1, \infty)$. Differentiability of $P$ with respect to $u$ and
continuity with respect to $\rho_1$ follow from similar considerations.
Now note that the differential of $P$ with respect to $u$ at $u \equiv 0$
and $\rho_1 = 0$ is given by
\begin{align*}
D_{u}P_{(0, 0)}(v)
 &= -\frac{4(n-1)}{n-2} \Delta v - (N-2)\scal~v
 &= \frac{4(n-1)}{n-2} \left(- \Delta v - \frac{\scal}{n-1}v\right).
\end{align*}
It follows from \cite[Appendix A]{GicquaudSakovich} that $D_{u}P_{(0, 0)}$
is Fredholm with index zero as a mapping from $X^{2, p}_{\delta'}(M, \bR)$
to $X^{0, p}_{\delta'}(M, \bR)$ for any $\delta' \in (-1, n)$
\footnote{The calculation of the indicial radius follows from the fact
that $D_{u}P_{(0, 0)}$ is (up to a multiplicative constant) a compact
perturbation of $v \mapsto -\Delta v + nv$ (see Lemma \ref{lmCompact})
whose indicial radius is $\frac{n+1}{2}$ (see \cite[Corollary 7.4]{LeeFredholm}).}.
Hence, $D_{u}P_{(0, 0)}$ will be an isomorphism provided  that the
$L^2$-kernel of $D_{u}P_{(0, 0)}$ is reduced to $\{0\}$. This is not
expected to hold in general. But, assuming that $\scal \leq 0$, if
$v \in W^{1, 2}_0(M, \bR)$ solves
\[
 - \Delta v - \frac{\scal}{n-1}v = 0,
\]
we have,
\[
 \int_M \left( |dv|^2 + \frac{\scal}{n-1} v^2\right) d\mu^g = 0 ~\Rightarrow v \equiv 0.
\]
The assumption $\scal \leq 0$ can be fulfilled at the costless price of
replacing the metric $g$ by some well chosen metric $g'$ conformal to $g$
(see \cite{AvilesMcowen,AnderssonChruscielFriedrich,Gicquaud}).
By the implicit function theorem, we get that the equation $P(u, \rho_1) = 0$
has a solution $u \in X^{2, p}_{\delta'}(M, \bR)$ for sufficiently small
$\rho_1$. Setting $\phi_0 = 1+u$, the scalar curvature of the metric
$\phi_0^{N-2} g$ agrees with $\hscal$ on $M_{\rho_1}$. Proposition
\ref{propImprovedDecay} ensures that $u = \phi_0-1 \in X^{2, p}_\delta(M,  \bR)$.
\end{proof}

We shall now work with the metric $\gtil$ as a background metric.
In particular, Lebesgue and Sobolev norms will be defined using the
metric $\gtil$. There is a caveat: the metric $\gtil$ is not asymptotically
hyperbolic in the sense we gave in Section \ref{secPrelim}! In particular,
Lemma \ref{lmPoincare} and Proposition \ref{propSemiContinuity} must be
reproven for the metric $\gtil$:

\begin{lemma}\label{lmPoincare2}
There exists a function $\widetilde{\epsilon} = \widetilde{\epsilon}(x) \in C^0(M, \bR)$,
tending to zero at infinity, such that for all $u \in W^{1, 2}_0(M, \bR)$,
we have
\[
 \int_M \left[\frac{4(n-1)}{n-2} |du|_{\gtil}^2 + \scaltil~u^2\right]d\mu^{\gtil}
 \geq
 \int_M \left(\frac{n-1}{n-2} + \widetilde{\epsilon}\right)u^2 d\mu^{\gtil}.
\]
\end{lemma}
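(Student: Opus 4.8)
The plan is to reduce Lemma \ref{lmPoincare2} to the already-established Lemma \ref{lmPoincare} by exploiting that $\gtil$ and $g$ agree outside a compact set. Recall from Step \ref{stepReduction2} that $\gtil = \phi_0^{N-2} g$ with $\phi_0 > 0$ and $\phi_0 - 1 \in X^{2,p}_\delta(M,\bR)$, $\delta > 0$; in particular $\phi_0 \to 1$ at infinity, $\phi_0$ is bounded above and below by positive constants, and $\phi_0 \equiv$ (something close to $1$) near infinity is not literally true, but $\phi_0 - 1 = o(1)$ there. The key observation is that the quadratic form $u \mapsto \int_M \bigl[\tfrac{4(n-1)}{n-2}|du|_{\gtil}^2 + \scaltil\,u^2\bigr]d\mu^{\gtil}$ is, by the conformal computation already carried out in the proof of Proposition \ref{propConfInvariance}, nothing but $G_g(\phi_0 u)$ — that is, $G_{\gtil}(u) = G_g(\phi_0 u)$ for every $u \in W^{1,2}_0(M,\bR)$ (this uses Lemma \ref{lmMapping} applied to $|d\phi_0|^2$ to guarantee $u\,d\phi_0 \in L^2$, exactly as in Proposition \ref{propConfInvariance}).

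Granting that identity, I would proceed as follows. Fix $u \in W^{1,2}_0(M,\bR)$ and set $w = \phi_0 u$. By Lemma \ref{lmPoincare} there is $\epsilon = \epsilon(x) \in C^0(M,\bR)$ tending to zero at infinity with
\[
 G_g(w) \geq \int_M \Bigl(\tfrac{n-1}{n-2} + \epsilon\Bigr) w^2\, d\mu^g
 = \int_M \Bigl(\tfrac{n-1}{n-2} + \epsilon\Bigr) \phi_0^2\, u^2\, d\mu^g.
\]
Now I rewrite the right-hand side against the measure $d\mu^{\gtil} = \phi_0^N\, d\mu^g$: since $\phi_0^2 = \phi_0^N \phi_0^{2-N} = \phi_0^N \phi_0^{-4/(n-2)}$, we get
\[
 \int_M \Bigl(\tfrac{n-1}{n-2} + \epsilon\Bigr) \phi_0^2\, u^2\, d\mu^g
 = \int_M \Bigl(\tfrac{n-1}{n-2} + \epsilon\Bigr) \phi_0^{-4/(n-2)}\, u^2\, d\mu^{\gtil}.
\]
Hence, writing $\tfrac{n-1}{n-2}\phi_0^{-4/(n-2)} = \tfrac{n-1}{n-2} + \bigl(\tfrac{n-1}{n-2}(\phi_0^{-4/(n-2)} - 1)\bigr)$, we obtain the claimed inequality with
\[
 \widetilde{\epsilon} \definedas \tfrac{n-1}{n-2}\bigl(\phi_0^{-4/(n-2)} - 1\bigr) + \epsilon\,\phi_0^{-4/(n-2)}.
\]
Since $\phi_0$ is continuous, bounded below by a positive constant, and $\phi_0 \to 1$ at infinity, the function $\phi_0^{-4/(n-2)}$ is continuous, bounded, and tends to $1$ at infinity; combined with $\epsilon \in C^0(M,\bR)$ tending to zero, this shows $\widetilde{\epsilon} \in C^0(M,\bR)$ and $\widetilde{\epsilon}(x) \to 0$ as $x \to \infty$, which is exactly what the lemma requires.

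The only genuine point needing care — the ``main obstacle'', though it is a mild one — is justifying the identity $G_{\gtil}(u) = G_g(\phi_0 u)$ for all $u \in W^{1,2}_0(M,\bR)$, i.e.\ that the conformal computation of Proposition \ref{propConfInvariance} applies with $\phi = \phi_0$. This is where one needs $\phi_0, \phi_0^{-1} \in X^{2,p}_0(M,\bR)$ with $p > n/2$ so that $|d\phi_0|^2 \in X^{0,p}_0(M,\bR)$ lies in the class to which Lemma \ref{lmMapping} applies (giving $u\,d\phi_0 \in L^2$ and hence $\phi_0 u \in W^{1,2}_0$), together with the pointwise identity $\scaltil\,\phi_0^{N-1} = -\tfrac{4(n-1)}{n-2}\Delta\phi_0 + \scal\,\phi_0$ valid $W^{2,p}_{\loc}$-a.e.\ from Step \ref{stepReduction2}; everything else is the same integration by parts already displayed in Proposition \ref{propConfInvariance}. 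Once that identity is in hand the rest is the bookkeeping above.
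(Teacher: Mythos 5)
Your proof is correct and follows essentially the same route as the paper's: both start from the identity $G_{\gtil}(u) = G_g(\phi_0 u)$ coming from the conformal computation in Proposition \ref{propConfInvariance}, apply Lemma \ref{lmPoincare} to $\phi_0 u$, and then convert the measure via $d\mu^{\gtil} = \phi_0^N d\mu^g$, absorbing $\phi_0^{2-N}$ into a new $\widetilde{\epsilon}$ that decays because $\phi_0 \to 1$ at infinity. Your extra paragraph spelling out why the conformal identity is valid for $\phi_0$ (via Lemma \ref{lmMapping} applied to $|d\phi_0|^2$) is a reasonable elaboration of what the paper leaves implicit by citing Proposition \ref{propConfInvariance}.
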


\begin{proof}
This follows at once from the calculation in the proof of Proposition
\ref{propConfInvariance}: for all $u \in W^{1, 2}_0(M, \bR)$, we have
\begin{align*}
G_{\gtil}(u)
 &= G_g(\phi_0 u)\\
 &\geq \int_M \left(\frac{n-1}{n-2} + \epsilon\right) (\phi_0u)^2 d\mu^g\\
 &\geq \int_M \left(\frac{n-1}{n-2} + \epsilon\right) \phi_0^{2-N} u^2 d\mu^{\gtil}.
\end{align*}
Since $\phi_0 - 1 \in X^{2, p}_\delta(M, \bR) \subset X^{0, \infty}_\delta(M, \bR)$,
we have
\[
 \left(\frac{n-1}{n-2} + \epsilon\right) \phi_0^{2-N} = \frac{n-1}{n-2} + \widetilde{\epsilon},
\]
with $\widetilde{\epsilon} = o(1)$. This ends the proof of the lemma.
\end{proof}

\begin{lemma}\label{lmSemiContinuity2}
The functional $G_{\gtil}$
is sequentially weakly lower semi-continuous on $W^{1, 2}_0(M, \bR)$.
\end{lemma}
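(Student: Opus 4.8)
The strategy is to deduce this from Proposition~\ref{propSemiContinuity}, which applies to the genuinely asymptotically hyperbolic metric $g$, via the conformal identity $G_{\gtil}(u)=G_g(\phi_0 u)$ already recorded in the proofs of Proposition~\ref{propConfInvariance} and Lemma~\ref{lmPoincare2}.

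First I would check that multiplication by $\phi_0$ is a bounded linear operator on $W^{1,2}_0(M,\bR)$. Since $\phi_0-1\in X^{2,p}_\delta(M,\bR)\subset X^{0,\infty}_\delta(M,\bR)$, the function $\phi_0$ is bounded, and since $\phi_0\to 1$ at infinity it is also bounded away from zero; thus $\|\phi_0 u\|_{L^2}\le \|\phi_0\|_{L^\infty}\|u\|_{L^2}$. For the gradient, $d(\phi_0 u)=\phi_0\,du+u\,d\phi_0$, so the only point to verify is $u\,d\phi_0\in L^2$ with a norm controlled by $\|u\|_{W^{1,2}_0(M,\bR)}$; this is exactly the estimate furnished by Lemma~\ref{lmMapping} applied to $f\equiv|d\phi_0|^2$, as in the proof of Proposition~\ref{propConfInvariance}. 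Hence there is a constant $C$ with $\|\phi_0 u\|_{W^{1,2}_0(M,\bR)}\le C\|u\|_{W^{1,2}_0(M,\bR)}$ for all $u$, and in particular $\phi_0 u\in W^{1,2}_0(M,\bR)$ whenever $u\in W^{1,2}_0(M,\bR)$, so $G_g(\phi_0 u)$ is defined and equals $G_{\gtil}(u)$.

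Next, any bounded linear operator between Hilbert spaces is weak--weak sequentially continuous; hence if $u_k\rightharpoonup u_\infty$ in $W^{1,2}_0(M,\bR)$ then $\phi_0 u_k\rightharpoonup\phi_0 u_\infty$ in $W^{1,2}_0(M,\bR)$. Applying Proposition~\ref{propSemiContinuity} to the sequence $(\phi_0 u_k)_k$ then yields
\[
\liminf_{k\to\infty}G_{\gtil}(u_k)=\liminf_{k\to\infty}G_g(\phi_0 u_k)\ge G_g(\phi_0 u_\infty)=G_{\gtil}(u_\infty),
\]
which is the assertion.

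There is essentially no genuine obstacle here: the one nontrivial ingredient is the boundedness of multiplication by $\phi_0$ on $W^{1,2}_0(M,\bR)$, and this is already contained in Lemma~\ref{lmMapping}. Should one prefer a self-contained argument mirroring Proposition~\ref{propSemiContinuity}, one can instead use formula~\eqref{eqConvex} applied to $\phi_0 u$ to split $G_{\gtil}(u)$ into a strongly continuous convex functional of $u$ plus the term $\int_M \epsilon\,\phi_0^{2-N}u^2\,d\mu^{\gtil}$, and then note that the latter is weakly continuous because $\epsilon\,\phi_0^{2-N}=o(1)$ at infinity and the restriction $W^{1,2}_0(M,\bR)\to L^2(K,d\mu^{\gtil})$ is compact for every bounded $K$, the measures $d\mu^{\gtil}$ and $d\mu^g$ being mutually comparable. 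I would, however, use the shorter route above.
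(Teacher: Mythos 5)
Your argument is the same as the paper's: the paper likewise reduces to Proposition~\ref{propSemiContinuity} by noting that multiplication by $\phi_0$ is a bounded operator on $W^{1,2}_0(M,\bR)$ (citing the proof of Proposition~\ref{propConfInvariance}), hence maps weakly convergent sequences to weakly convergent sequences, and then applies the identity $G_{\gtil}(u)=G_g(\phi_0 u)$. You merely spell out the boundedness estimate and the weak--weak continuity of bounded operators, which the paper leaves implicit.
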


\begin{proof}
From the proof of Proposition \ref{propConfInvariance}, multiplication by
$\phi_0$ is a bounded operator on $W^{1, 2}_0(M, \bR)$. As a consequence,
if $(u_k)_k$ is a sequence of elements of $W^{1, 2}_0(M, \bR)$ converging
weakly to some $u_\infty$, we have that
$\phi_0 u_k \hookrightarrow \phi_0 u_\infty$.
As a consequence,
\[
 \liminf_{k\to \infty} G_{\gtil}(u_k) = \liminf_{k\to \infty} G_g(\phi_0 u_k) \geq G_g(\phi_0 u_\infty) = G_{\gtil}(u_\infty).
\]
\end{proof}

We also note that the argument in \cite{DiltsMaxwell} can be simplified
at this point.
We introduce the following functional defined on $W^{1, 2}_0(M, \bR)$
\begin{equation}\label{edDefF}
 F(u) \definedas \int_M \left[\frac{4(n-1)}{n-2} |du|_{\gtil}^2 + \scaltil \left((u+1)^2-1\right) - \frac{2}{N} \hscal \left(|u+1|^N-1\right)\right] d\mu^{\gtil}.
\end{equation}

\begin{step}\label{stepContinuity}
The functional $F$ is well-defined on $W^{1, 2}_0(M, \bR)$ and continuous
for the strong topology.
\end{step}

\begin{proof}
To prove that $F$ is well-defined, we rewrite it as follows:
\begin{equation}\label{eqRedefF}
 \begin{aligned}
  F(u) &= \int_M \left[\frac{4(n-1)}{n-2} |du|_{\gtil}^2 + \left(\scaltil - \hscal\right) \left((u+1)^2-1\right)\right.\\
   &\qquad\qquad\left. - \hscal \left(\frac{2}{N} |u+1|^N - (u+1)^2 + 1 - \frac{2}{N}\right)\right] d\mu^{\gtil}.
 \end{aligned}
\end{equation}
Since $\scaltil - \hscal$ is bounded with compact support, the second term
in the integral can be estimated using the Cauchy-Schwarz inequality by
noticing that $(u+1)^2-1 = u^2 + 2u$. The only difficult term is the last
one. Some simple calculation shows, however, that
\[
 0 \leq \frac{2}{N} |u+1|^N - (u+1)^2 + 1 - \frac{2}{N} \leq \alpha u^2 + \beta |u|^N
\]
for some well chosen $\alpha > N-2$ and $\beta > \frac{2}{N}$. Hence, since
$\hscal \in L^\infty(M, \bR)$, the last term is well defined by the Sobolev embedding
theorem. Next we prove strong continuity of $F$. Note that
\[
 u \mapsto \int_M \left[\frac{4(n-1)}{n-2} |du|_{\gtil}^2 + \left(\scaltil - \hscal\right) ((u+1)^2-1)\right] d\mu^{\gtil}
\]
is clearly continuous as the sum of a bounded quadratic form and a
bounded linear form. The only problem comes from the last term in
\eqref{eqRedefF}. Note that the function
\begin{equation}\label{eqDefh}
 h(x) \definedas \frac{2}{N} \left[|x+1|^N-1\right] - (x+1)^2 + 1
\end{equation}
satisfies
\[
 h'(x) = 2 \left(|x+1|^{N-2}-1\right)(x+1).
\]
Hence,
\[
 |h'(x)| \leq \alpha' |x|^{N-1} + \beta' |x|
\]
for some $\alpha', \beta' \geq 0$.
So, on any interval $[a, b]$ we have
\begin{align*}
\left|h(b) - h(a)\right|
 &\leq |b-a| \sup_{x \in [a, b]} |h'(x)|\\
 &\leq |b-a| \left[\alpha' \left(|a|^{N-1}+|b|^{N-1}\right) + \beta' (|a|+|b|)\right].
\end{align*}

As a consequence, given $u, v \in W^{1, 2}_0(M, \bR)$, we have
\begin{align*}
 &\left|\int_M \left(- \hscal\right) \left(\frac{2}{N} |u+1|^N - (u+1)^2 + 1 - \frac{2}{N}\right) d\mu^{\gtil} - \int_M \left(- \hscal\right) \left(\frac{2}{N} |v+1|^N - (v+1)^2 + 1 - \frac{2}{N}\right) d\mu^{\gtil}\right|\\
 &\qquad = \left|\int_M \left(- \hscal\right) \left(h(u) - h(v)\right] d\mu^{\gtil}\right|\\
 &\qquad \leq \int_M \left(- \hscal\right)  \left|h(u) - h(v)\right| d\mu^{\gtil}\\
 &\qquad \leq \int_M \left(- \hscal\right) |u-v| \left[\alpha' \left(|u|^{N-1}+|v|^{N-1}\right) + \beta' (|u|+|v|)\right] d\mu^{\gtil}\\
 &\qquad \leq \left\|\hscal\right\|_{L^\infty(M, \bR)} \left[\alpha' (\|u\|_{L^N(M, \bR)}^{N-1} + \|v\|_{L^N(M, \bR)}^{N-1}) \|u-v\|_{L^N(M, \bR)}\right.\\
 &\qquad\qquad\left. + \beta'(\|u\|_{L^2(M, \bR)} + \|v\|_{L^2(M, \bR)}) \|u-v\|_{L^2(M, \bR)}\right]\\
 &\qquad \leq C\left(\left\|\hscal\right\|_{L^\infty(M, \bR)}, \|u\|_{W^{1, 2}_0(M, \bR)}, \|v\|_{W^{1, 2}_0(M, \bR)}\right) \|u-v\|_{W^{1,2}(M, \bR)},
\end{align*}
where the constant $C$ depends continuously on $(\|u\|_{W^{1, 2}_0(M, \bR)}, \|v\|_{W^{1, 2}_0(M, \bR)})$.
This shows that $F$ is locally Lipschitz continuous.
\end{proof}

Our next goal is to prove that $F$ is coercive.
Before that, we need a lemma adapted from \cite{Rauzy}
and \cite[Proposition 4.5]{DiltsMaxwell}:

\begin{lemma}\label{lmIntegration}
Assuming that $\cZ$ has $\cY_g(\cZ) > 0$, there exist positive constants
$\eta$ and $\epsilon$ such that for all $u \in W^{1, 2}_0(M, \bR)$,
we have
\[
 \int_M \left|\hscal\right|~u^N d\mu^{\gtil} \leq \eta \|u\|^N_{L^N(M, \bR)} \Rightarrow G_{\gtil}(u) \geq \epsilon \|u\|^2_{L^2(M, \bR)}.
\]
\end{lemma}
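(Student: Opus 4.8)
The plan is to argue by contradiction, following the strategy of Rauzy and Dilts--Maxwell but exploiting the fact that $G_{\gtil}$ is now weakly lower semi-continuous (Lemma \ref{lmSemiContinuity2}) and satisfies the asymptotic Poincar\'e inequality of Lemma \ref{lmPoincare2}. Suppose the conclusion fails. Then for every $j \in \bN$ there exists $u_j \in W^{1, 2}_0(M, \bR)$, $u_j \not\equiv 0$, such that
\[
 \int_M |\hscal|~u_j^N d\mu^{\gtil} \leq \frac{1}{j} \|u_j\|^N_{L^N(M, \bR)}
 \quad\text{and}\quad
 G_{\gtil}(u_j) < \frac{1}{j} \|u_j\|^2_{L^2(M, \bR)}.
\]
By homogeneity (both the hypothesis and both sides of the implication scale correctly under $u_j \mapsto c u_j$) I may normalise $\|u_j\|_{L^2(M, \bR)} = 1$. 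From Lemma \ref{lmPoincare2} and $G_{\gtil}(u_j) < 1/j$ I get a uniform bound on $\|du_j\|_{L^2(M, \bR)}$: indeed $\frac{4(n-1)}{n-2}\|du_j\|^2_{L^2} \leq G_{\gtil}(u_j) + \int |\scaltil| u_j^2 \leq 1/j + \|\scaltil\|_{L^\infty}$, so $(u_j)_j$ is bounded in $W^{1, 2}_0(M, \bR)$. Pass to a weakly convergent subsequence $u_j \rightharpoonup u_\infty$.

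Next I identify $u_\infty$. First, $G_{\gtil}(u_\infty) \leq \liminf_j G_{\gtil}(u_j) \leq 0$ by Lemma \ref{lmSemiContinuity2}. Second, I want to show $u_\infty \in \cF(\cZ)$, i.e. $u_\infty$ vanishes a.e. on $M \setminus \cZ = \{\hscal < 0\}$. For this, fix a compact set $K$ on which $|\hscal| \geq c > 0$; since $W^{1, 2}_0(M, \bR) \hookrightarrow L^2(K, \bR)$ compactly and $L^N(K,\bR)\hookrightarrow L^2(K,\bR)$, the embedding into $L^2(K)$ combined with the interpolation/Sobolev inequalities lets me pass to the limit in $\int_K |\hscal| u_j^N \leq \int_M |\hscal| u_j^N \leq \frac{1}{j}\|u_j\|_{L^N(M)}^N \to 0$ (the $L^N(M)$ norms being uniformly bounded by Sobolev), giving $\int_K |\hscal| u_\infty^N = 0$, hence $u_\infty \equiv 0$ on $K$. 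Exhausting $\{\hscal<0\}$ by such compacta (using that $\hscal$ is measurable, so $\{|\hscal|\geq 1/m\}$ increases to $\{\hscal<0\}$ up to a null set) yields $u_\infty = 0$ a.e. on $\{\hscal < 0\}$, so $u_\infty \in \cF(\cZ)$. But then $\cY_g(\cZ) > 0$ together with Proposition \ref{propConfInvariance} ($\cY_g(\cZ) = \cY_{\gtil}(\cZ)$, since $\gtil = \phi_0^{N-2}g$ with $\phi_0 - 1 \in X^{2,p}_\delta \subset X^{2,p}_0$ and $\phi_0^{-1}$ likewise) and Proposition \ref{propYamabe} (positivity of $\cY$ iff positivity of $\lambda$) forces $G_{\gtil}(u_\infty) \geq \lambda_{\gtil}(\cZ) \|u_\infty\|^2_{L^2} \geq 0$, hence combined with $G_{\gtil}(u_\infty)\leq 0$ we get $G_{\gtil}(u_\infty) = 0$ and then, since $\lambda_{\gtil}(\cZ) > 0$, $u_\infty \equiv 0$.

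Now I derive the contradiction from $u_\infty \equiv 0$ together with $\|u_j\|_{L^2(M, \bR)} = 1$. The point is that $u_j \rightharpoonup 0$ in $W^{1, 2}_0$ forces $u_j \to 0$ strongly in $L^2_{\loc}(M, \bR)$, so the $L^2$-mass of $u_j$ escapes to infinity; but near infinity the asymptotic Poincar\'e inequality has a genuinely positive constant, which contradicts $G_{\gtil}(u_j) \to 0$. Concretely: by Lemma \ref{lmPoincare2} pick a compact $K \ssubset M$ with $\widetilde{\epsilon} \geq -\frac{n-1}{2(n-2)}$ off $K$, so that
\[
 G_{\gtil}(u_j) \geq \int_{M \setminus K} \left(\tfrac{n-1}{n-2} + \widetilde{\epsilon}\right) u_j^2 d\mu^{\gtil} - C \int_K u_j^2 d\mu^{\gtil}
 \geq \tfrac{n-1}{2(n-2)} \int_{M \setminus K} u_j^2 d\mu^{\gtil} - C \int_K u_j^2 d\mu^{\gtil}.
\]
Since $u_j \to 0$ in $L^2(K, \bR)$ by Rellich and $\|u_j\|^2_{L^2(M, \bR)} = 1$, we have $\int_{M\setminus K} u_j^2 \to 1$, hence $\liminf_j G_{\gtil}(u_j) \geq \frac{n-1}{2(n-2)} > 0$, contradicting $G_{\gtil}(u_j) < 1/j \to 0$. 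This closes the argument, and one reads off admissible $\eta, \epsilon$ from the quantitative version (e.g. any $\epsilon < \lambda_{\gtil}(\cZ)$ works once $\eta$ is taken small enough, by a compactness/contradiction run with fixed $\epsilon$).

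The main obstacle is the middle step: rigorously extracting from the two smallness conditions that the weak limit lies in $\cF(\cZ)$, i.e. controlling $\int |\hscal| u_j^N$ well enough to conclude $u_\infty$ vanishes where $\hscal < 0$. This requires care because the constraint involves the $L^N$ norm, which is only weakly lower semi-continuous, not weakly continuous; the fix is to localise to compacta where $|\hscal|$ is bounded below, use the compact embedding $W^{1,2}_0 \hookrightarrow L^2$ there (and the uniform $L^N(M)$ bound from the global Sobolev inequality \eqref{eqSobolevAH} to control the tail), and then exhaust. Everything else is a routine assembly of Lemma \ref{lmPoincare2}, Lemma \ref{lmSemiContinuity2}, Proposition \ref{propConfInvariance} and Proposition \ref{propYamabe}.
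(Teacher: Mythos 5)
Your argument is correct, but the final step is genuinely different from the paper's. Both proofs start by contradiction, extract a weakly convergent subsequence, use the convexity/strong continuity of $u\mapsto\int_M |\hscal|\,|u|^N\,d\mu^{\gtil}$ to get the weak limit $u_\infty$ into $\cF(\cZ)$, and invoke Lemma \ref{lmSemiContinuity2} to get $G_{\gtil}(u_\infty)\leq 0$. The paper then \emph{normalizes by the localized norm} $\|u_k\|_{L^2(K,\bR)}=1$ (legitimate because Lemma \ref{lmPoincare2} gives $\|u_k\|_{L^2(M)}\lesssim\|u_k\|_{L^2(K)}$ once $G_{\gtil}(u_k)$ is small), which by Rellich forces $u_\infty\not\equiv 0$ and yields an immediate contradiction with $\cY_{\gtil}(\cZ)>0$. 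You instead normalize $\|u_j\|_{L^2(M,\bR)}=1$; this does not rule out $u_\infty\equiv 0$, and you correctly notice that you must handle this case separately. You do so by deducing $u_\infty\equiv 0$ from $\lambda_{\gtil}(\cZ)>0$ and then running a concentration-at-infinity argument: Rellich gives $\|u_j\|_{L^2(K)}\to 0$, so $\int_{M\setminus K} u_j^2\to 1$, and the asymptotic Poincar\'e inequality (Lemma \ref{lmPoincare2}) pins $G_{\gtil}(u_j)$ away from zero on that region. This is a perfectly valid alternative: the paper's choice of normalization packs the ``no escape to infinity'' argument into the setup, while yours makes it an explicit final step; neither is substantially shorter, but your version makes the mechanism (coercivity at infinity) more visible.

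One caveat worth fixing in your write-up: in the step identifying $u_\infty\in\cF(\cZ)$, you say the compact embedding into $L^2(K)$ ``lets me pass to the limit in $\int_K|\hscal|u_j^N$.'' This is not quite right as stated, because $W^{1,2}(K)\hookrightarrow L^N(K)$ is the critical Sobolev embedding and is not compact, so $u_j\to u_\infty$ strongly in $L^q(K)$ only for $q<N$, not $q=N$; you cannot claim $\int_K|\hscal|u_j^N\to\int_K|\hscal|u_\infty^N$. What you can (and should) do is exactly what you do for $G_{\gtil}$: the functional $u\mapsto\int_M|\hscal|\,|u|^N\,d\mu^{\gtil}$ is non-negative, convex, and strongly continuous on $W^{1,2}_0(M,\bR)$, hence weakly lower semicontinuous, so $0\leq\int_M|\hscal|\,|u_\infty|^N\leq\liminf_j\int_M|\hscal|\,|u_j|^N=0$. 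This gives $u_\infty=0$ a.e. on $\{\hscal<0\}$ directly and makes your compactum-exhaustion step unnecessary. With that repair, your proof closes cleanly.
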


\begin{proof}
The argument goes by contradiction. We assume that there exists a sequence of
functions $u_k \in W^{1, 2}_0(M, \bR)$, $u_k \not\equiv 0$ a.e., such that
\[
 \frac{1}{\|u_k\|_{L^N(M, \bR)}^N} \int_M \left|\hscal\right|~u_k^N d\mu^{\gtil} \to 0
 \quad\text{and}\quad
 \frac{G_{\gtil}(u_k)}{\|u_k\|_{L^2(M, \bR)}^2}\to 0.
\]
We argue as in the proof of Proposition \ref{propYamabe} using Lemma
\ref{lmPoincare2} instead of Lemma \ref{lmPoincare}. There exist
a compact subset $K \ssubset M$ and a constant $C > 0$ such that
\[
 \|u\|_{L^2(M, \bR)}^2 \leq G_{\gtil}(u) + (C+1) \|u\|_{L^2(K, \bR)}^2.
\]
Letting
\[
 \epsilon_k \definedas \frac{G_{\gtil}(u_k)}{\|u_k\|_{L^2(M, \bR)}^2} \to 0,
\]
we have that
\[
 \|u_k\|_{L^2(M, \bR)}^2 \leq \epsilon_k \|u_k\|_{L^2(M, \bR)}^2 + (C+1) \|u\|_{L^2(K, \bR)}^2.
\]
If $k$ is large enough so that $\epsilon_k \leq 1/2$, we obtain
\[
 \|u_k\|_{L^2(M, \bR)}^2 \leq 2(C+1) \|u_k\|_{L^2(K, \bR)}^2.
\]
Since
\[
 \frac{4(n-1)}{n-2} \|du_k\|^2_{L^2(M, \bR)} - \left\|\scaltil\right\|_{L^\infty(M, \bR)} \|u_k\|_{L^2(M, \bR)}^2 \leq G_{\gtil}(u_k) = \epsilon_k \|u_k\|_{L^2(M, \bR)}^2
\]
we have that (provided $k$ is large enough)
\[
 \|u_k\|_{W^{1, 2}_0(M, \bR)} \lesssim \|u_k\|_{L^2(M, \bR)} \lesssim \|u_k\|_{L^2(K, \bR)}.
\]
We can assume that $\|u_k\|_{L^2(K, \bR)} = 1$ so that the sequence
$(u_k)_{k \geq 0}$ is bounded in $W^{1, 2}_0(M, \bR)$. By weak compactness,
we can asume further that there exists a function $u_\infty \in W^{1, 2}_0(M, \bR)$
such that $(u_k)_{k \geq 0}$ converges weakly (in $W^{1, 2}_0(M, \bR)$) to
$u_\infty$ and strongly in $L^2_{\mathrm{loc}}(M, \bR)$. We have $u_\infty \not\equiv 0$
since $\|u_\infty\|_{L^2(K, \bR)} = 1$. The function
\[
 u \mapsto \int_M |\hscal| u^N d\mu^{\gtil}
\]
is strongly continuous on $W^{1, 2}_0(M, \bR)$ and convex. Hence it is weakly
lower semicontinuous:
\[
 \int_M |\hscal| u_\infty^N d\mu^{\gtil} \leq \liminf_{k \to \infty} \int_M |\hscal| u_k^N d\mu^{\gtil} = 0.
\]
This shows that $u_\infty \equiv 0$ a.e. on $M \setminus \cZ$, i.e.
$u_\infty \in \cF(\cZ)$. We now get a contradiction since, by the assumption
made for $\cZ$, we have $G_{\gtil}(u_\infty) > 0$ while, by the lower semicontinuity
of $G_{\gtil}$ (Lemma \ref{lmSemiContinuity2}), we have
$G_{\gtil}(u_\infty) \leq \liminf_{k \to \infty} G_{\gtil}(u_k) = 0$.
\end{proof}

\begin{step}\label{stepCoercivity}
Assuming that $\cY_g(Z) > 0$, the functional $F$ is coercive, meaning
that for all $A > 0$ there exists a $B > 0$ such that
\[
 \forall u \in W^{1, 2}_0(M, \bR),~F(u) \leq A \Rightarrow \|u\|_{W^{1, 2}_0(M, \bR)} \leq B.
\]
\end{step}

\begin{proof}
We assume by contradiction that there exists $A > 0$ and a sequence
$(u_k)_k$ of elements of $W^{1, 2}_0(M, \bR)$ such that $F(u_k) \leq A$
while $\|u_k\|_{W^{1, 2}_0(M, \bR)} \to \infty$. We rewrite $F(u_k)$ using
formula \eqref{eqRedefF}:
\begin{align*}
 F(u_k) &= \int_M \left[\frac{4(n-1)}{n-2} |du_k|_{\gtil}^2 + \left(\scal - \hscal\right) \left((u_k+1)^2-1\right)\right.\\
   &\qquad\qquad\left. - \hscal \left(\frac{2}{N} |u_k+1|^N - (u_k+1)^2 + 1 - \frac{2}{N}\right)\right] d\mu^{\gtil}.
\end{align*}

Note that the function
\begin{equation}\label{eqDefH}
 h: x \mapsto \frac{2}{N} |u_k+1|^N - (u_k+1)^2 + 1 - \frac{2}{N}
\end{equation}
is non-negative over $\bR$ (this follows by studying its variations over
$[-1, \infty)$ and using parity). Since $\hscal \leq 0$, we have
\begin{equation}\label{eq0}
 \int_M \left[\frac{4(n-1)}{n-2} |du_k|^2 + \left(\scal - \hscal\right) \left((u_k+1)^2-1\right)\right]d\mu^{\gtil} \leq A.
\end{equation}
In particular, letting $\Omega$ be any open set outside which
$\scaltil \equiv \hscal$, if
$\|u_k\|_{L^2(\Omega, \bR)}$ is bounded, we conclude that
$\|u_k\|_{W^{1, 2}_0(M, \bR)}$ is also bounded which contradicts the
assumption. As a consequence, $\|u_k\|_{L^2(\Omega, \bR)}$ is unbounded.

Assume now that for an infinite number of values of $k$, we have
\[
 \int_M \left|\hscal\right|~u_k^N d\mu^{\gtil} \leq \eta \|u_k\|^N_{L^N(M, \bR)},
\]
where $\eta$ has been defined in Lemma \ref{lmIntegration}. Then we
have for all such $k$,
\[
 G_{\gtil}(u_k) \geq \epsilon \|u_k\|^2_{L^2(M, \bR)} \to \infty.
\]
We rewrite
\[
 F(u_k) = G_{\gtil}(u_k) + 2 \int_M \left(\scaltil-\hscal\right) u_k d\mu^{\gtil} - \frac{2}{N} \int_M \hscal~\left[|u_k+1|^N - N u_k - 1\right]d\mu^{\gtil}.
\]
By the convexity of $x \mapsto |x+1|^N$, we have that
$|u_k+1|^N - N u_k - 1 \geq 0$ so
\begin{align*}
F(u_k)
 &\geq G_{\gtil}(u_k) + 2 \int_M \left(\scal-\hscal\right) u_k d\mu^{\gtil}\\
 &\geq \epsilon \|u_k\|^2_{L^2(M, \bR)} - 2 \left\|\scaltil - \hscal\right\|_{L^2(M, \bR)} \|u_k\|_{L^2(M, \bR)}.
\end{align*}
As a consequence, for $k$ large enough, we have $F(u_k) > A$. This
contradicts our assumption. So, for $k$ large enough, we have
\[
 \int_M \left|\hscal\right|~u_k^N d\mu^{\gtil} \geq \eta \|u_k\|^N_{L^N(M, \bR)}.
\]
We can write
$h(x) = \frac{2}{N} |x|^N + f(x)$, where $|f(x)| \leq C (|x|^{N-1} + |x|^2)$
for some constant $C = C(n )> 0$, where $h$ was defined in \eqref{eqDefH},
so, from Equation \eqref{eqRedefF}, we have
\begin{align*}
&F(u_k)\\
&= \int_M \left[\frac{4(n-1)}{n-2} |du_k|_{\gtil}^2 + \left(\scaltil - \hscal\right) \left((u_k+1)^2-1\right) - \hscal \left(\frac{2}{N} |u_k|^N + f(u_k)\right)\right] d\mu^{\gtil}\\
&\geq \int_M \left[\frac{4(n-1)}{n-2} |du_k|_{\gtil}^2 + \left(\scaltil - \hscal\right) \left((u_k+1)^2-1\right) - \frac{2}{N} \hscal |u_k|^N\right] d\mu^{\gtil}\\
&\qquad\qquad - C\left\|\hscal\right\|_{L^\infty(M, \bR)} \left(\|u_k\|_{L^{N-1}(M, \bR)}^{N-1} + \|u_k\|_{L^2(M, \bR)}^2\right)\\
&\geq \int_M \left[\frac{4(n-1)}{n-2} |du_k|_{\gtil}^2 + \left(\scaltil - \hscal\right) \left((u_k+1)^2-1\right)\right] d\mu^{\gtil}\\
&\qquad\qquad + \frac{2\eta}{N} \|u_k\|_{L^N(M, \bR)}^N - C\left\|\hscal\right\|_{L^\infty(M, \bR)} \left(\|u_k\|_{L^N(M, \bR)}^{\lambda(N-1)} \|u_k\|_{L^2(M, \bR)}^{(1-\lambda)(N-1)} + \|u_k\|_{L^2(M, \bR)}^2\right),
\end{align*}
where $\lambda \in (0, 1)$ is such that
\[
 \frac{1}{N-1} = \frac{\lambda}{N} + \frac{1-\lambda}{2}.
\]
From Equation \eqref{eq0}, we see that, for some large constant $\Lambda$,
\[
 \|u_k\|^2_{W^{1, 2}_0(M, \bR)} \leq \Lambda \left(\|u_k\|_{L^2(\Omega, \bR)}^2 + 1\right).
\]
Since $\Omega$ is compact, we have
\[
 \|u_k\|_{L^2(\Omega, \bR)}^2 \leq \|u_k\|_{L^N(\Omega, \bR)}^2 \vol_g(\Omega)^n \leq \|u_k\|_{L^N(M, \bR)}^2 \vol_g(\Omega)^n.
\]
In particular, $\|u_k\|_{L^N(M, \bR)} \to \infty$. We finally arrive at
the following asymptotic inequality
\[
 F(u_k) \geq \frac{2\eta}{N} \|u_k\|_{L^N(M, \bR)}^N + O\left(\|u_k\|_{L^N(M, \bR)}^{N-1} + \|u_k\|_{L^N(M, \bR)}^2\right),
\]
which yields once again a contradiction.
This ends the proof of the coercivity of $F$.
\end{proof}

\begin{step}\label{stepContinuity2}
The functional $F$ is sequentially lower semicontinuous for the weak
topology on $W^{1, 2}_0(M, \bR)$.
\end{step}

\begin{proof}
This is a simple calculation. We rewrite $F(u)$ as follows:
\begin{align*}
F(u)
 &= \int_M \left[\frac{4(n-1)}{n-2} |du|_{\gtil}^2 + \scaltil~u^2 - \frac{2}{N}\hscal \left(|u+1|^N - 1 - Nu\right) + 2 \left(\scal - \hscal\right) u\right] d\mu^{\gtil}\\
 &= G_{\gtil}(u) + \frac{2}{N} \int_M \left(- \hscal\right) \left(|u+1|^N - 1 - Nu\right) d\mu^{\gtil} + 2 \int_M \left(\scaltil - \hscal\right) u d\mu^{\gtil}
\end{align*}
From Lemma \ref{lmSemiContinuity2}, we have that $u \mapsto G_{\gtil}(u)$
is sequentially lower semicontinuous. The remaining two terms are clearly
strongly continuous and convex so, in particular, weakly lower semicontinuous.
\end{proof}

\begin{step}\label{stepExistence}
There exists a minimizer $u \in W^{1, 2}_0(M, \bR)$ for $F$ on $\cF$. The
function $\phi \definedas \phi_0(1+u)$ is positive and solves the prescribed
scalar curvature equation \eqref{eqPrescribedScalar}.
Hence, the metric $\ghat = \phi^{N-2} g$ has scalar curvature $\hscal$.
\end{step}

\begin{proof}
Existence of a minimizer $u$ follows at once from Step \ref{stepContinuity2}
and Step \ref{stepCoercivity}. Upon replacing $u$ by $\ubar = |u+1|-1$
which satisfies $F(\ubar) = F(u)$ so $\ubar$ is another minimizer for $F$,
we can assume that $u \geq -1$ i.e. $\phi \geq 0$. We are left to show that
$\phi$ is positive. By standard elliptic regularity, we have that
$\phi \in W^{2, p}_{loc}(M, \bR) \subset L^\infty_{loc}(M, \bR)$.
From the Harnack inequality \cite[Theorems 1.1 and 5.1]{TrudingerHarnack},
we conclude that $\phi > 0$.
\end{proof}

\begin{step}\label{stepUniqueness}
The function $\phi$ constructed in Step \ref{stepExistence} belongs to
$1+X^{2, p}_\delta(M, \bR)$ for all $\delta \in (0, n)$.
\end{step}

This follows from (local) elliptic regularity and Proposition
\ref{propImprovedDecay}.

\section{Solutions to the Lichnerowicz equation}\label{secLich}
In this section, we consider the Lichnerowicz equation
\begin{equation}\label{eqLichnerowicz}
 -\frac{4(n-1)}{n-2} \Delta \phi + \scal~\phi + \frac{n-1}{n} \tau^2 \phi^{N-1} = \frac{A^2}{\phi^{N+1}},
\end{equation}
where the unknown is the positive function $\phi$, and $\tau, A$ are two
given functions. We refer the reader to \cite{BartnikIsenberg} for an
introduction to this equation and to \cite{AnderssonChrusciel,Gicquaud,Sakovich}
for a detailed study of this equation in the asymptotically hyperbolic
setting. When $A \equiv 0$ this equation reduces to
\begin{equation}\label{eqLichnerowicz0}
 -\frac{4(n-1)}{n-2} \Delta \phi + \scal~\phi + \frac{n-1}{n} \tau^2 \phi^{N-1} = 0
\end{equation}
which is nothing but \eqref{eqPrescribedScalar} with
$\hscal = -\frac{n-1}{n}\tau^2$.
We first state and prove a result regarding the monotonicity method to
solve semilinear elliptic PDEs on asymptotically hyperbolic manifolds and
in a low regularity context:

\begin{proposition}\label{propMonotonicity}
Let $F: M \times (0, \infty) \to \bR$ be a function of the form
\[
 F(x, \phi) = \sum_{i \in I} a_i(x) \phi^{\lambda_i},
\]
where $I$ is a finite set, $a_i \in X^{0, p}_0(M, \bR)$ and
$\lambda_i \in \bR$. Assume that there exist two positive functions
$\phi_{\pm} \in X^{2, p}_0(M, \bR)$ such that $\phi_- \leq \phi_+$
with $\phi_-$ bounded from below and
\[
 -\Delta \phi_+ + F(x, \phi_+) \geq 0 \text{ a.e. on } M \text{ and } -\Delta \phi_- + F(x, \phi_-) \leq 0 \text{ a.e. on } M.
\]
Then there exists a function $\phi \in X^{2, p}_0(M, \bR)$,
$\phi_- \leq \phi \leq \phi_+$, such that
\begin{equation}\label{eqPhi}
 -\Delta \phi + F(x, \phi) = 0.
\end{equation}
\end{proposition}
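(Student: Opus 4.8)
The plan is to use the classical monotone iteration (sub/supersolution) scheme, but carried out in the weighted local Sobolev spaces $X^{k,p}_0(M,\bR)$ rather than the usual weighted Hölder or Sobolev spaces, so that the low regularity hypothesis $a_i \in X^{0,p}_0(M,\bR)$ suffices. First I would fix a constant $\kappa > 0$ large enough that the map $\phi \mapsto F(x,\phi) + \kappa \phi$ is monotone increasing in $\phi$ on the interval $[\inf_M \phi_-, \sup_M \phi_+]$ for a.e.\ $x$; this is possible because $\phi_-$ is bounded below by a positive constant and $\phi_+ \in X^{2,p}_0(M,\bR) \subset L^\infty(M,\bR)$, so $\phi$ ranges over a fixed compact subinterval of $(0,\infty)$ on which each $\phi^{\lambda_i}$ is Lipschitz, and the coefficients $a_i$ are bounded (the $X^{0,p}_0$-norm controls the local $L^p$-norm, hence via Sobolev embedding with $p > n/2$... more carefully: $a_i \in X^{0,p}_0$ need not be bounded, so I should instead absorb $\kappa$ into a function $\kappa \in X^{0,p}_0(M,\bR)$, $\kappa \geq 0$, chosen so that $\partial_\phi F(x,\phi) + \kappa(x) \geq 0$ on the relevant range; concretely $\kappa(x) = \sum_i |a_i(x)|\,|\lambda_i|\, C_i$ for suitable constants $C_i$ depending only on the interval endpoints).

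Next I would set up the iteration: define $\phi_0 = \phi_+$ and inductively let $\phi_{m+1} \in X^{2,p}_0(M,\bR)$ be the solution of the linear equation
\[
 -\Delta \phi_{m+1} + \kappa\, \phi_{m+1} = \kappa\, \phi_m - F(x,\phi_m).
\]
The right-hand side lies in $X^{0,p}_0(M,\bR)$ by the multiplication property of these spaces (as used in the proof of Lemma \ref{lmCompact} and in \cite{GicquaudSakovich}), and the operator $u \mapsto -\Delta u + \kappa u$ is a compact perturbation of $u \mapsto -\Delta u + n u$ by Lemma \ref{lmCompact}, hence Fredholm of index zero from $X^{2,p}_0$ to $X^{0,p}_0$; injectivity (and thus invertibility) follows from the maximum principle since $\kappa \geq 0$ and the indicial roots of $-\Delta + n$ exclude the weight $0$ (see \cite[Chapter 7]{LeeFredholm}, \cite[Section 3]{LeeSpectrum}). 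So $\phi_{m+1}$ is well defined. A standard maximum-principle argument, using the supersolution property of $\phi_+$, the subsolution property of $\phi_-$, and the monotonicity of $\kappa\phi - F(x,\phi)$, shows by induction that $\phi_- \leq \phi_{m+1} \leq \phi_m \leq \phi_+$ for all $m$.

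Then I would pass to the limit. The sequence $(\phi_m)_m$ is monotone decreasing and bounded between $\phi_-$ and $\phi_+$, hence converges pointwise a.e.\ to some $\phi$ with $\phi_- \leq \phi \leq \phi_+$; in particular $\phi$ is bounded and bounded away from $0$. By dominated convergence $\kappa \phi_m - F(x,\phi_m) \to \kappa \phi - F(x,\phi)$ in $L^p_{\mathrm{loc}}(M,\bR)$, and since the right-hand sides are uniformly bounded in $X^{0,p}_0(M,\bR)$ (the functions $\phi^{\lambda_i}$ evaluated on the fixed interval are uniformly bounded, and $a_i, \kappa \in X^{0,p}_0$), interior elliptic $L^p$-estimates give that $(\phi_m)_m$ is bounded in $X^{2,p}_0(M,\bR)$ and, after passing to a subsequence, converges weakly there and strongly in $W^{2,p}_{\mathrm{loc}}$; the limit is $\phi$, so $\phi \in X^{2,p}_0(M,\bR)$ and $-\Delta\phi + F(x,\phi) = 0$ a.e.

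\textbf{Main obstacle.} The delicate point is ensuring the linear solvability step is uniform and stays inside $X^{2,p}_0$, and in particular that the weight $\delta = 0$ is not an indicial root of $-\Delta + \kappa$ (equivalently of $-\Delta + n$ after the compact perturbation), so that the Fredholm operator is actually an isomorphism; this is where one must invoke the spectral analysis of \cite{LeeFredholm,LeeSpectrum}. A secondary subtlety is that, because the $a_i$ are only in $X^{0,p}_0$ and not necessarily bounded, the shift function $\kappa$ must itself be taken in $X^{0,p}_0$ rather than constant, and one must check that monotonicity of $\phi \mapsto \kappa(x)\phi - F(x,\phi)$ still holds pointwise a.e.\ on the order interval $[\phi_-,\phi_+]$ — which it does, by construction of $\kappa$ and boundedness of that interval away from $0$ and $\infty$.
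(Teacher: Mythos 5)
Your plan is recognizably a monotone-iteration cousin of the paper's argument, and the ``shift function'' $\kappa(x)$ you introduce to render $\phi\mapsto\kappa\phi-F(x,\phi)$ monotone is exactly the paper's $\omega$; you also correctly notice that $\kappa$ must be a function, not a constant, because the $a_i$ need not be bounded. But you solve the linear step \emph{globally} on $M$, whereas the paper deliberately solves it on a nested exhaustion $\Omega_1\subset\Omega_2\subset\cdots$ with Dirichlet data $\phi_-$ on $\partial\Omega_k$, obtains a fixed point of the resulting self-map $\cF_k:\cC_k\to\cC_k$ by Schauder, and only then passes to the limit by a diagonal extraction. That localization is not a stylistic choice: it is precisely what makes the linear solvability and the comparison argument unconditional.

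The gap in your version is the step ``the operator $u\mapsto-\Delta u+\kappa u$ is a compact perturbation of $u\mapsto-\Delta u+nu$, hence Fredholm of index zero and, by the maximum principle, an isomorphism $X^{2,p}_0\to X^{0,p}_0$.'' First, Lemma~\ref{lmCompact} (and its $X^{0,p}_{0^+}$ variant used in Proposition~\ref{propImprovedDecay}) requires the multiplier to lie in $X^{0,p}_{\delta'}$ for some $\delta'>0$, or at least in $X^{0,p}_{0^+}$; your $\kappa$, built from the $a_i\in X^{0,p}_0$, need satisfy neither $\kappa-n\in X^{0,p}_{0^+}$ nor any decay at all, so the compact-perturbation claim does not follow. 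Second, even for a genuinely $0$th-order perturbation of $-\Delta+n$ one cannot conclude injectivity on $X^{2,p}_0$ from $\kappa\ge 0$ alone: $X^{2,p}_0$ contains bounded functions with no decay, and $\kappa$ may vanish on large regions (indeed on all of $M$ if $F\equiv 0$), in which case $-\Delta$ has nontrivial bounded kernel on an asymptotically hyperbolic manifold (bounded harmonic functions are plentiful, since $0$ is an indicial root of $-\Delta$), and $\delta=0$ then sits exactly on the boundary of the Fredholm range $(0,n-1)$. For the same reason, the inductive comparison $\phi_-\le\phi_{m+1}\le\phi_m$ needs a maximum principle for $-\Delta+\kappa$ applied to $X^{2,p}_0$ functions on the whole of $M$, which is not available in this generality. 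The paper's device of solving Dirichlet problems on compact $\Omega_k$ makes both the solvability ($\omega\ge0$ on a bounded domain) and the comparison ($\psi=\phi_-$ on $\partial\Omega_k$) elementary, and the passage to the limit then only uses interior elliptic estimates and the uniform $L^\infty$ bound $\phi_-\le\phi_k\le\phi_+$.

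To repair your proof you would either have to restrict to the situation where $\kappa-n\in X^{0,p}_{0^+}$ and separately argue that $X^{2,p}_0$-kernel elements of the perturbed operator decay (so the $L^2$-positivity of $-\Delta+n$ forces triviality), or replace the global linear solve by the paper's localize-and-diagonalize scheme; the latter is the route that works under the stated hypotheses $a_i\in X^{0,p}_0$.
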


\begin{proof}
The proof is standard in a context with more regularity (see e.g.
\cite{AnderssonChrusciel} for a proof based on the Perron method
and \cite{Gicquaud} for the construction of a monotone sequence
of functions). The proof we adapt here is taken fron \cite[Chapter 14]{Taylor3}.
We choose a function $\omega \in X^{0, p}_0$ such that the function
$\phi \mapsto \omega(x) \phi - F(x, \phi)$ is well-defined and increasing
for almost every $x \in M$ on the interval $[\phi_-(x), \phi_+(x)]$. This
can be done as follows. Let
\[
 H(x, \phi) = \omega(x) \phi - F(x, \phi),
\]
then
\begin{align*}
\pdiff{H}{\phi}
 &= \omega(x) - \sum_{i \in I} \lambda_i a_i(x) \phi^{\lambda_i-1}\\
 &\geq \omega(x) - \sum_{i \in I} |\lambda_i| |a_i(x)| \phi^{\lambda_i-1}\\
 &\geq \omega(x) - \sum_{\substack{i \in I\\ \lambda_i \geq 1}} |\lambda_i| |a_i(x)| \phi_+^{\lambda_i-1} - \sum_{\substack{i \in I\\ \lambda_i < 1}} |\lambda_i| |a_i(x)| \phi_-^{\lambda_i-1}
\end{align*}
So the condition on $H$ will be achieved provided we choose
$\omega \in X^{0, p}_0(M, \bR)$ such that
\[
 \omega(x) \geq \sum_{\substack{i \in I\\ \lambda_i \geq 1}} |\lambda_i| |a_i(x)| \phi_+^{\lambda_i-1}
 + \sum_{\substack{i \in I\\ \lambda_i < 1}} |\lambda_i| |a_i(x)| \phi_-^{\lambda_i-1} \in X^{0, p}_{\delta}(M, \bR),
\]
(note that the right hand side belonqs to $X^{0, p}_\delta(M, \bR)$
because we assumed that $\phi_-$ is bounded from below and
$\phi_+ \in X^{2, p}_\delta(M, \bR) \subset L^\infty(M, \bR)$).
Let $(\Omega_k)_k$ be a nested sequence of bounded non-empty open
subsets of $M$ with smooth boundary such that $M = \bigcup_k \Omega_k$.
For each $k$, let
\[
 \cC_k = \{\phi \in L^\infty(\Omega_k, \bR),~\phi_- \leq \phi \leq \phi_+\}.
\]
For any $\phi \in \cC_k$, let $\psi = \cF_k(\phi) \in W^{2, p}(\Omega_k, \bR)$
denote the solution to
\begin{equation}\label{eqPhi2}
-\Delta \psi + \omega \psi = \omega \phi - F(x, \phi),\quad
\psi = \phi_- \quad\text{on } \partial \Omega_k.
\end{equation}
(the solution exists and is unique due to the fact that $\omega \geq 0$).
We claim that $\psi \in \cC_k$. Indeed, since $H$ is increasing, we have
\[
 -\Delta \psi + \omega \psi = H(x, \phi) \geq H(x, \phi_-),\quad \psi = \phi_-\quad\text{on } \partial \Omega_k.
\]
So, from the maximum principle, we conclude that $\psi \geq \phi_-$.
Similarly, we have $\psi \leq \phi_+$. Thus $\psi \in \cC_k$.

Now remark that $\cC_k$ is a convex subset of $L^\infty(M, \bR)$. It
can be easily seen that $\cF_k$ is a continuous mapping and, due to the
fact that  $W^{2, p}(\omega_k, \bR)$ embeds compactly into
$L^\infty(\Omega_k, \bR)$, the image of $\cC_k$ is relatively compact.
It follows from Schauder's fixed point theorem that there exists
at least one solution $\phi_k \in W^{2, p}(M, \bR)$, $\phi_k \in \cC_k$
to the following problem
\begin{equation}\label{eqPhi3}
-\Delta \phi_k + \omega \phi_k = \omega \phi_k - F(x, \phi_k),\quad
\phi_k = \phi_- \quad\text{on } \partial \Omega_k.
\end{equation}
We select one such solution randomly for each $k$.

If $U$ and $V$ are bounded open subsets of $M$, $U \subset\subset V$
there exists $k_0 \geq 0$ such that $V \subset \Omega_k$ for all
$k \geq k_0$ (this is due to the fact that $\Vbar$ is compact).
Since, for all $k \geq k_0$, we have $\phi_- \leq \phi_k \leq \phi_+$,
the sequence of functions $(F(x, \phi_k))_{k \geq k_0}$ is bounded in
$L^p(V, \bR)$. By elliptic regularity, we conclude that
$(\phi_k)_{k \geq k_0}$ is bounded in $W^{2, p}(U, \bR)$ so, in particular,
there exists a subsequence of $(\phi_k)_{k \geq k_0}$ that converges
in $L^\infty(U, \bR)$. By a diagonal extraction process similar to
Lemma \ref{lmCompact}, we construct a function $\phi$,
$\phi_- \leq \phi \leq \phi_+$, $\phi \in W^{2, p}_{loc}(M, \bR)$
solving \eqref{eqPhi}. Finally, $\phi \in X^{2, p}_0(M, \bR)$ by elliptic
regularity.
\end{proof}

The following result is based on \cite[Theorem 7.1]{Sakovich}.

\begin{theorem}\label{thmLich}
Assume given two non-negative functions $A$ and $\tau$ such that
$A, \tau-n \in X^{0, 2p}_\delta(M, \bR)$, $\delta \in (0, n)$. The
following statements are equivalents:
\begin{enumerate}
\renewcommand{\theenumi}{\roman{enumi}}
\renewcommand{\labelenumi}{\theenumi.}
\item\label{it1} There exists a positive solution $\phi \in 1 + X^{2, p}_\delta(M, \bR)$
to \eqref{eqLichnerowicz},
\item\label{it2} There exists a positive solution $\phitil \in 1 + X^{2, p}_\delta(M, \bR)$
to \eqref{eqLichnerowicz} with $A \equiv 0$,
\item\label{it3} The set $\cZ \definedas \tau^{-1}(0)$ satisfies
$\cY_g(\cZ) > 0$.
\end{enumerate}
Further, when a solution $\phi \in 1 + X^{2, p}_\delta(M, \bR)$ exists to
\eqref{eqLichnerowicz}, it is unique.
\end{theorem}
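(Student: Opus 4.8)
The plan is to establish the cycle $\eqref{it3}\Rightarrow\eqref{it2}\Rightarrow\eqref{it1}\Rightarrow\eqref{it3}$ and to treat uniqueness separately. The implication $\eqref{it3}\Rightarrow\eqref{it2}$, together with its converse and the uniqueness of $\phitil$, is simply Theorem~\ref{thmA} applied to $\hscal\definedas-\frac{n-1}{n}\tau^2\leq0$: equation \eqref{eqLichnerowicz0} is exactly \eqref{eqPrescribedScalar} for this choice, and $\hscal^{-1}(0)=\tau^{-1}(0)=\cZ$. The only hypothesis to be checked is $\hscal-\scal\in X^{0,p}_\delta(M,\bR)$; writing $\tau^2-n^2=(\tau-n)^2+2n(\tau-n)$ and using $\tau-n\in X^{0,2p}_\delta(M,\bR)$ together with the multiplication property $X^{0,2p}_\delta\cdot X^{0,2p}_\delta\subset X^{0,p}_{2\delta}\subset X^{0,p}_\delta$ gives $\tau^2-n^2\in X^{0,p}_\delta$, hence $\hscal+n(n-1)\in X^{0,p}_\delta$, while $\scal+n(n-1)\in X^{0,p}_\delta$ by asymptotic hyperbolicity.

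For $\eqref{it1}\Rightarrow\eqref{it3}$ I would argue as in the proof of $\eqref{itExists}\Rightarrow\eqref{itPositive}$ of Theorem~\ref{thmA}. If $\phi$ solves \eqref{eqLichnerowicz}, the conformal metric $\ghat=\phi^{N-2}g$ has scalar curvature $\scaltil_{\ghat}=A^2\phi^{-2N}-\frac{n-1}{n}\tau^2$, which is nonnegative on $\cZ$; hence for every $u\in\cF(\cZ)$ one has $G_{\ghat}(u)\geq\frac{4(n-1)}{n-2}\int_M|du|^2_{\ghat}\,d\mu^{\ghat}$. Since $\phi$ is bounded above and bounded away from zero, the global Sobolev inequality \eqref{eqSobolevAH} transfers from $g$ to $\ghat$ with a new constant, so $\cY_{\ghat}(\cZ)>0$; and because $\phi,\phi^{-1}\in 1+X^{2,p}_\delta\subset X^{2,p}_0$, Proposition~\ref{propConfInvariance} yields $\cY_g(\cZ)=\cY_{\ghat}(\cZ)>0$.

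The substantial step is $\eqref{it2}\Rightarrow\eqref{it1}$, and its main obstacle is the construction of a supersolution: in the original frame $\scal$ is sign-indefinite and the term $A^2\phi^{-N-1}$ is awkward to dominate. I would circumvent this using the conformal covariance of the Lichnerowicz operator. Setting $\gtil=\phitil^{N-2}g$ and $\psi=\phi/\phitil$, and using $\scal_{\gtil}=-\frac{n-1}{n}\tau^2$, equation \eqref{eqLichnerowicz} becomes
\[
 -\frac{4(n-1)}{n-2}\Delta_{\gtil}\psi+\frac{n-1}{n}\tau^2\bigl(\psi^{N-1}-\psi\bigr)=\frac{\Atil^2}{\psi^{N+1}},\qquad \Atil\definedas A\,\phitil^{-N}\in X^{0,2p}_\delta(M,\bR).
\]
In this form $\psi_-\equiv 1$ is a subsolution (and is bounded away from zero), and $\psi_+=1+w$ is a supersolution as soon as $w\geq 0$ solves the Poisson equation $-\frac{4(n-1)}{n-2}\Delta_{\gtil}w=\Atil^2$, because then $(1+w)^{N-1}-(1+w)\geq 0$ and $\Atil^2\geq\Atil^2(1+w)^{-N-1}$. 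Such a $w$ exists in $X^{2,p}_0(M,\bR)$: since $\Atil^2\in X^{0,p}_{2\delta}$, the Fredholm theory of the Laplacian — which applies to $\gtil$ even though it is not asymptotically hyperbolic, being uniformly equivalent to $g$ — produces $w\in X^{2,p}_{\delta'}$ for a suitable $\delta'\in(0,n-1)$, and $w\geq0$ because it is superharmonic and tends to zero at infinity. Proposition~\ref{propMonotonicity}, whose proof carries over to the background $\gtil$, then gives $\psi\in X^{2,p}_0(M,\bR)$ with $1\leq\psi\leq 1+w$ solving the displayed equation; an improved-decay bootstrap identical to Proposition~\ref{propImprovedDecay} — the linearization at $\psi\equiv 1$ is again a compact perturbation of $\frac{4(n-1)}{n-2}(-\Delta_{\gtil}+n)$ — upgrades this to $\psi-1\in X^{2,p}_\delta$, and $\phi\definedas\phitil\psi$ is the desired solution of \eqref{eqLichnerowicz}, with $\phi-1\in X^{2,p}_\delta$.

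Finally, uniqueness follows by the argument of Proposition~\ref{propMaximumPrinciple}: given two solutions $\phi_1,\phi_2\in 1+X^{2,p}_\delta(M,\bR)$, set $\psi_i=\log\phi_i\in W^{2,p}_{\loc}(M,\bR)$; in the log-transformed equations the term $\scal$ enters additively and cancels upon subtraction, and the monotonicity of $t\mapsto t^{N-1}$ and of $t\mapsto -t^{-N-1}$ shows that $\xi=\psi_1-\psi_2$ satisfies $-\frac{4(n-1)}{n-2}\bigl[\Delta\xi+\langle d(\psi_1+\psi_2),d\xi\rangle\bigr]+V\xi=0$ for some $V\geq 0$. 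Applying the maximum principle on an exhaustion and using $\xi\to 0$ at infinity gives $\sup_M\xi\leq 0$ and, symmetrically, $\inf_M\xi\geq 0$, whence $\phi_1=\phi_2$.
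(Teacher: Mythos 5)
Your proposal is essentially correct and its overall architecture matches the paper's, but you take two non-trivial detours worth flagging. For $\eqref{it1}\Rightarrow\eqref{it3}$ you argue directly: the metric $\ghat=\phi^{N-2}g$ has scalar curvature $A^2\phi^{-2N}-\frac{n-1}{n}\tau^2$, which is nonnegative on $\cZ$, and conformal invariance (Proposition~\ref{propConfInvariance}) gives $\cY_g(\cZ)>0$. The paper instead goes $\eqref{it1}\Rightarrow\eqref{it2}\Rightarrow\eqref{it3}$, building $\phitil$ between the barriers $0$ and $\phi$ via Proposition~\ref{propMonotonicity} and then having to rule out the degenerate limit $\phitil\equiv 0$ with the lower-barrier argument of Step~\ref{stepReduction}. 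Your route is arguably cleaner since it bypasses the monotonicity method and the nondegeneracy issue entirely. For $\eqref{it2}\Rightarrow\eqref{it1}$, you build the supersolution from the pure Poisson equation $-\frac{4(n-1)}{n-2}\Delta_{\gtil}w=\Atil^2$, whereas the paper uses the linearized Lichnerowicz operator $-\frac{4(n-1)}{n-2}\Delta_{\gtil}+\left(N-2\right)\frac{n-1}{n}\tau^2$ (equation~\eqref{eqLichnerowiczl}). Both produce valid supersolutions, but the paper's operator is a compact perturbation of $-\Delta+n$ and thus has indicial radius $\frac{n+1}{2}$ and Fredholm range $(-1,n)$, giving $\util\in X^{2,p}_\delta$ in one shot; the pure Laplacian only has indicial radius $\frac{n-1}{2}$ and range $(0,n-1)$, so you correctly incur an extra improved-decay bootstrap. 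Your proof therefore does slightly more work than necessary at this step.

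One point you should tighten: the parenthetical claim that Fredholm theory for $-\Delta_{\gtil}$ ``applies to $\gtil$ even though it is not asymptotically hyperbolic, being uniformly equivalent to $g$'' is not by itself a justification — uniform equivalence of metrics does not give the conformally compact structure that Lee's theory requires, and the paper explicitly warns (right after Step~\ref{stepReduction2}) that $\gtil$ falls outside the definition of Section~\ref{secPrelim}. The correct fix, which the paper implements implicitly in passing from \eqref{eqLichnerowiczl} to \eqref{eqLichnerowiczl0}, is to use the conformal covariance of the operator to pull the equation back to the $g$-frame: setting $v=\phitil w$, your Poisson equation becomes $-\frac{4(n-1)}{n-2}\Delta_g v+\scal_g v+\frac{n-1}{n}\tau^2\phitil^{N-2}v=A^2\phitil^{-N-1}$, whose zeroth-order coefficient tends to $0$ at infinity, so it is a compact perturbation of $-\frac{4(n-1)}{n-2}\Delta_g$ and the standard AH Fredholm theory applies. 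With that substitution your argument is complete.
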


\begin{proof}
The equivalence between \ref{it2} and \ref{it3} follows from Theorem
\ref{thmA}. To show the equivalence between \ref{it1} and \ref{it2}, we
follow the argument given in \cite{Sakovich}.

Assume first that \ref{it1} holds. Then the solution $\phi$ to
\eqref{eqLichnerowicz} is a supersolution to \eqref{eqLichnerowicz0}.
The zero function being a subsolution to \eqref{eqLichnerowicz0}, we
conclude from Proposition \ref{propMonotonicity} that there exists a
solution $\phitil$ to \eqref{eqLichnerowicz0}, $0 \leq \phitil \leq \phi$.
Yet, we have to be more cautious, we have to rule out the possibility
that $\phitil \equiv 0$. This is achieved by using the lower barrier
$\phi_-$ defined in the proof of Step \ref{stepReduction} in the previous
section and noting that,
by a straightforward induction argument, $\phi_k \geq \phi_-$ for all
$k \geq 0$, where $(\phi_k)_{k \geq 0}$ is the iteration sequence of the
monotonicity method. Thus, $\phitil \not\equiv 0$. So, finally,
$\phitil > 0$ by the Harnack inequality. This show that \ref{it1}
$\Rightarrow$ \ref{it2}.

Conversely, assume that \ref{it2} holds. Then the solution
$\phitil$ to \eqref{eqLichnerowicz0} is a subsolution to
\eqref{eqLichnerowicz}. We still need to construct a supersolution
to \eqref{eqLichnerowicz}. To this end, we perform a conformal change:
Set $\gtil \definedas \phitil^{N-2} g$ and $\Atil = \phitil^{-N} A$,
then \eqref{eqLichnerowicz} becomes
\begin{equation}\label{eqLichnerowicz1}
 -\frac{4(n-1)}{n-2} \Deltatil \frac{\phi}{\phitil} + \frac{n-1}{n} \tau^2 \left[\left(\frac{\phi}{\phitil}\right)^{N-1} - \frac{\phi}{\phitil}\right] = \Atil^2 \left(\frac{\phi}{\phitil}\right)^{-N-1},
\end{equation}
where $\Deltatil$ is the Laplace operator for the metric $\gtil$. Following
\cite{MaxwellNonCMC}, we introduce the following equation for $\util$:
\begin{equation}\label{eqLichnerowiczl}
 -\frac{4(n-1)}{n-2} \Deltatil \util + (N-2)\frac{n-1}{n} \tau^2 \util = \Atil^2.
\end{equation}
This equation can be rewritten as
\begin{equation}\label{eqLichnerowiczl0}
 -\frac{4(n-1)}{n-2} \Delta u + \scal~u + (N-1)\frac{n-1}{n} \tau^2 \phitil^{N-2} u = \Atil^2 \phitil^{N-1},
\end{equation}
where $u = \phitil \util$. From Lemma \ref{lmCompact}, the operator on the
left is a compact perturbation of
\[
 u \mapsto -\frac{4(n-1)}{n-2} \Delta u + \scal~u + \frac{n(n-1)(n+2)}{n-2} u
\]
which is Fredholm with zero index (from \cite[Theorem C]{LeeFredholm}) and
has indicial radius $(n+1)/2$. Further, note that the homogeneous equation
associated to \eqref{eqLichnerowiczl} has no non trivial solution in
$W^{1, 2}_0(M, \bR)$. This
shows that Equation \eqref{eqLichnerowiczl} (or equivalently
\eqref{eqLichnerowiczl0}) admits a unique solution $\util \in X^{2, p}_\delta(M, \bR)$.
Applying the maximum principle to \eqref{eqLichnerowiczl}, we see that
$u \geq 0$. One then readily check that $1+u$ is a supersolution to
\eqref{eqLichnerowicz1} and, hence, that $\phitil(1+u)$ is a supersolution
to \eqref{eqLichnerowicz}. We have found a subsolution $\phitil$ and
a supersolution $\phitil(1+u) \geq \phitil$ to \eqref{eqLichnerowicz}, so,
from Proposition \ref{propMonotonicity}, we get a solution to \eqref{eqLichnerowicz},
proving that \ref{it1} holds. So \ref{it2} $\Rightarrow$ \ref{it1}.

Uniqueness of the solution is proven by means similar to
that of Proposition \ref{propMaximumPrinciple}. We refer the reader to
\cite{AnderssonChrusciel} or \cite[Section 8]{Sakovich}.
\end{proof}

\providecommand{\bysame}{\leavevmode\hbox to3em{\hrulefill}\thinspace}
\providecommand{\MR}{\relax\ifhmode\unskip\space\fi MR }
\providecommand{\MRhref}[2]{%
  \href{http://www.ams.org/mathscinet-getitem?mr=#1}{#2}
}
\providecommand{\href}[2]{#2}

\end{document}